\documentclass[12pt,reqno]{NumPDEsArticle}

\usepackage[utf8]{inputenc}
\usepackage[T1]{fontenc}
\usepackage[english]{babel}

\usepackage{NumPDEsMacros}

\usepackage{csquotes}
\usepackage{enumitem}
\usepackage{amssymb}
\usepackage{microtype}
\usepackage{bbm}
\usepackage{bm}
\usepackage{stackengine}
\usepackage{float}
\usepackage{subfig}

\title{New \textsl{a~posteriori} Error Estimates \\ for full-space transmission problems}

\author{Alexander Freiszlinger}
\author{Dirk Pauly}
\author{Dirk Praetorius}
\author{Michael Schomburg}

\address{TU Wien, Institute of Analysis and Scientific Computing, Wiedner Hauptstraße 8-10, 1040 Wien, Austria}
\email{alexander.freiszlinger@tuwien.ac.at \quad \rm (corresponding author)}
\email{dirk.praetorius@tuwien.ac.at}

\address{TU Dresden, Institute of Analysis, Zellescher Weg 12-14, 01069 Dresden, Germany}
\email{dirk.pauly@tu-dresden.de}
\email{michael.schomburg@tu-dresden.de}

\newcommand{\Omegaext}{\Omega^{\mathrm{ext}}}
\newcommand{\gammaint}{\gamma^{\mathrm{int}}} 
\newcommand{\gammaext}{\gamma^{\mathrm{ext}}} 

\newcommand{\uext}{u^{\mathrm{ext}}}

\newcommand{\AAA}{\mathfrak{A}}

\keywords{FEM-BEM coupling, boundary element method (BEM), finite element method (FEM), a~posteriori error estimation, adaptive algorithm}
\subjclass[2010]{65N38, 65N30, 65N15, 65N80, 65N50}
\thanks{This research was funded by the Austrian Science Fund (FWF) projects
\href{https://www.fwf.ac.at/en/research-radar/10.55776/F65}{10.55776/F65} (SFB
F65 ``Taming complexity in PDE systems''),
\href{https://www.fwf.ac.at/en/research-radar/10.55776/I6802}{10.55776/I6802}
(international project I6802 ``Functional error estimates for PDEs on unbounded
domains''), and
\href{https://www.fwf.ac.at/en/research-radar/10.55776/P33216}{10.55776/P33216}
(standalone project P33216 ``Computational nonlinear PDEs'').}

\begin{document}
\maketitle

\begin{abstract}
In the present work, we derive functional upper bounds for the potential error arising from finite-element boundary-element coupling formulations for a nonlinear Poisson-type transmission problem. 
The proposed \textsl{a~posteriori} error estimates are independent of the precise discretization scheme and provide guaranteed upper bounds for the potential error. 
The computation of these upper bounds is based on the solutions of local auxiliary finite element problems on patches in the interior domain and in a strip domain along the coupling boundary. 
Numerical experiments illustrate the performance of the proposed error estimation strategy for a related adaptive mesh-refinement strategy.
\end{abstract}

\section{Introduction}

Let $\Omega \subset \R^d$, $d \in \set{2,3}$, be a bounded Lipschitz domain with connected polygonal boundary $\Gamma \coloneqq \partial \Omega$. {Let $\bm{n}_\Gamma$ be the exterior unit normal vector on $\Gamma$.} {Denote} by $\Omegaext \coloneqq \R^d \setminus \overline{\Omega}$ the exterior domain with respect to $\Omega$. 
Given a volume force $f$ and boundary data $(\const{g}{D},\const{g}{N})$, we consider the transmission problem of finding $u$ and $\uext$ such that 
\begin{subequations} \label{eq:nltransmission}
 \begin{align}
  \label{eq:tmint}
  -\div(\AAA \nabla u)  
  = f 
  &\quad \text{in } \Omega, \\
  \label{eq:tmext} 
  -\Delta \uext 
  = 0 
  &\quad \text{in } \Omegaext, \\ 
  \label{eq:tmdir}
  u - \uext 
  = g_D 
  &\quad \text{on } \Gamma, \\ 
  \label{eq:tmneu}
  (\AAA \nabla u - \nabla \uext) \cdot \bm{n}_{\Gamma} 
  = \const{g}{N} 
  &\quad \text{on } \Gamma, \\
  \label{eq:tmrad}
  \uext(x) 
  = \OO(|x|^{-1}) 
  &\quad \text{as } |x| \to \infty,
 \end{align}
\end{subequations}
where $\mathfrak{A} \colon \R^d \rightarrow \R^d$ is a (possibly nonlinear) strongly monotone and Lipschitz continuous operator in the sense of \cite{Zeidler1990}, i.e., 
\begin{subequations} \label{eq:lipell}
\begin{align}
 \label{eq:lip}
 \abs{\mathfrak{A} x - \mathfrak{A} y} 
 &\leq \const{C}{Lip} \abs{x - y} 
 \quad \text{for all } x,y \in \R^d, \\ 
 \label{eq:ell}
 \const{C}{mon}\abs{x - y}^2
 &\leq (\mathfrak{A} x - \mathfrak{A} y) \cdot (x - y)
 \quad \text{for all } x,y \in \R^d. 
\end{align}
\end{subequations}
In 2D, validity of the radiation condition~\eqref{eq:tmrad} requires additional assumptions on the data $(f,\const{g}{N})$, while for general data~\eqref{eq:tmrad} holds with the unphysical radiation condition $\OO(\log \abs{x})$; see~\cite[Section~8]{McLean2000} and Remark~\ref{rem:comp}.

Since problem \eqref{eq:nltransmission} involves the unbounded exterior domain $\Omegaext$, usual solution strategies represent $\uext$ in terms of certain integral operators applied to some density $\phi$ on the boundary. 
While this favors a boundary element approach in the exterior domain $\Omegaext$, the presence of the nonlinearity $\AAA$ recommends the use of finite elements in the interior domain $\Omega$. 
In this framework, different FEM-BEM coupling methods have been proposed, e.g., the Johnson--Nédélec coupling~\cite{Johnson1980}, the symmetric Costabel--Han coupling~\cite{Costabel1987,Han1990}, and the Bielak--MacCamy coupling~\cite{, Bielak1983,Bielak1995}. 
We refer to~\cite{Aurada2012a} for the proof that all these {coupling formulations} admit unique solutions $(u,\phi) \in H^1(\Omega) \times H^{-1/2}(\Gamma)$ {on the continuous level, whereas} discrete well-posedness of the respective conforming Galerkin discretization is only guaranteed in general for the Costabel--Han coupling. Instead, the Johnson--Nédélec coupling and the Bielak--MacCamy coupling require sufficient diffusion in the interior domain to guarantee discrete well-posedness, e.g., $\const{C}{mon} > 1/4$ in~\cite{Aurada2012a} or some relaxed but $\Omega$-dependent condition in~\cite{Ferrari2023}.

Given a conforming approximation $(u_\ell,\phi_\ell) \in H^1(\Omega) \times H^{-1/2}(\Gamma)$ of $(u,\phi)$, where $u_\ell,\phi_\ell$ are related to some {simplicial} mesh $\TT_\ell$ of $\Omega$ and its induced boundary mesh $\TT_\ell^\Gamma {\coloneqq \TT_\ell|_\Gamma}$ on $\Gamma$, {this work aims, for the first time,} to derive and analyze an \textsl{a~posteriori} error estimation strategy based on so-called \textit{functional upper bounds}. 
Unlike available estimators, the derived bounds thus do not rely on how {the approximation} $(u_\ell,\phi_\ell)$ has been computed (e.g., Galerkin {discretization} with/without exact quadrature, { matrix compression of the involved discrete boundary integral operators, and inexact iterative solver}).

\textsl{A~posteriori} error estimation for the symmetric coupling is well-established; see, e.g.,~\cite{Carstensen1995,Carstensen1996,Carstensen1996a,Maischak1997,Mund1999,Brink1999,Gatica1999,Gatica2001,Stephan2005,Leydecker2010}. 
In~\cite{Aurada2012a,Aurada2012b}, \textsl{a~posteriori} estimates have been derived also for the Johnson--Nédélec coupling and the Bielak--MacCamy coupling. 
However, error control in all these works focuses on the error in $H^1(\Omega) \times H^{-1/2}(\Gamma)$, i.e., ${\norm{u - u_\ell}_{H^1(\Omega)} + }\norm{\phi - \phi_\ell}_{H^{-1/2}(\Gamma)}$, and not on the (in practice more {important}) potential error
\begin{equation} \label{eq:err}
 \EE(u_\ell,\phi_\ell)
 \coloneqq \norm{\nabla(u - u_\ell)}_{\mathfrak{A}} + \norm{\nabla(\uext - \uext_\ell)}_{L^2(\Omegaext)},
\end{equation} 
where $\uext_\ell$ is an approximation of $\uext$ extracted from $u_\ell$ and $\phi_\ell$ {and $\norm{\nabla (u - u_\ell)}_{\mathfrak{A}}^2 \coloneqq \dual{\mathfrak{A} \nabla u - \mathfrak{A} \nabla u_\ell}{\nabla (u-u_\ell)}_{L^2(\Omega)}$ denotes the error measure induced by the nonlinear diffusion $\mathfrak{A}$.}
Moreover, up until now, only residual-based error estimators provide unconditional upper bounds (e.g., \cite{Carstensen1995,Carstensen1996,Carstensen1996a,Brink1999,Gatica1999,Gatica2001,Aurada2012a}), while others like the two-level error estimators (e.g., \cite{Mund1999,Maischak1997,Aurada2012b}) rely on a suitable saturation assumption. 
Hence, \textsl{a~posteriori} error estimation strategies based on functional upper bounds constitute a considerable improvement, since there exist neither \textsl{a~posteriori} estimates controlling the more desirable potential error directly, nor estimates which do not depend on the underlying discretization scheme. 
Moreover, for particular situations like linear diffusion, the proposed estimator is, up to data oscillations, constant-free and does not involve unknown generic constants.

Estimators of functional type have been developed in, e.g.,~\cite{Repin2008,Anjam2016} for problems in bounded domains,~\cite{Pauly2009} for problems in unbounded domains, and~\cite{Kurz2019} for the boundary element method. 
Building on these existing works on functional error estimates and combining it with ideas from \textsl{a~posteriori} error estimation by equilibrated fluxes (see, e.g., \cite{Ainsworth1997,Braess2008,Ern2015}) for the FEM part, we derive guaranteed upper bounds for the combined potential error {$\EE(u_\ell,\phi_\ell)$ defined in~\eqref{eq:err}} that, additionally, are constant-free up to data oscillations for linear diffusion operators $\mathfrak{A}$.

We emphasize that our analysis is independent of the precise discretization scheme. 
In fact, the proposed \textit{functional error estimate} provides guaranteed upper bounds for any {conforming approximation $(u_\ell,\phi_\ell)$ and hence any} type of discretization method (e.g., Galerkin methods, collocation approaches, different coupling strategies, etc.). 

In this paper, the functional estimates are realized by local auxiliary problems in $\Omega$ and, in a first essentially auxiliary step (Algorithm~\ref{algo:adap}), in a (bounded) boundary strip $\omega \subset \Omegaext$ with $\Gamma \subset \partial \omega$ (see also~\cite{Kurz2019}), whereas later $\omega \subseteq \Omega$ with $\Gamma \subset \partial \omega$ (Algorithm~\ref{algo:adapint}).
While we rely on an auxiliary flux $\bm{\sigma}_\ell \approx \mathfrak{A}\nabla u_\ell$ computed by solving local discrete problems of dual mixed type in order to control $\norm{\nabla(u - u_\ell)}_{\mathfrak{A}}$, we aim to control $\norm{\nabla(\uext - \uext_\ell)}_{L^2(\Omegaext)}$ by use of local discrete solutions $w_\ell$ for the Laplace equation in a boundary strip domain $\omega$, improving the approach from~\cite{Kurz2019}, where a global discrete problem in $\omega$ was considered to control the BEM error.
This strategy naturally comes with certain technical and computational complications, as the domains of the local auxiliary functions change within the adaptive loop, which impedes the analysis of the discrete solutions $\bm{\sigma}_\ell$ and $w_\ell$. 
However, the behaviour of FEM-solutions on boundary strips was recently addressed in~\cite{Freiszlinger2025} and the generation of volume meshes is a standard problem in the context of the theory of finite elements.

Finally, by use of Poincaré--Steklov operators all computation for solving the local auxiliary problems can be performed on $\Omega$ so that meshing of (parts of) the exterior domain can indeed be avoided.

\subsection*{Outline} The paper is organized as follows. 
In Section~\ref{sec:preliminaries}, we gather the necessary notation as well as useful preliminary results, which include basic properties of boundary integral operators and solvability of certain FEM-BEM coupling methods. 
Section~\ref{sec:aposteriori} is dedicated to the derivation of the functional upper bounds (Theorem~\ref{thm:mainresult}), which are based on the error identity from Lemma~\ref{lem:errid}.
An improved variant for linear problems is stated in Corollary~\ref{cor:corint}, while Corollary~\ref{cor:corext} shows how the meshing of the exterior domain can be avoided.
Furthermore, fully computable and localizable error estimators are constructed. In Section~\ref{sec:adaptive}, we formulate two adaptive FEM-BEM algorithms in the frame of the symmetric Costabel--Han coupling, where Algorithm~\ref{algo:adap} relies on meshing parts of the exterior domain, which is avoided by Algorithm~\ref{algo:adapint}. Lastly, Section~\ref{sec:numerics} presents numerical experiments involving different coupling methods and various geometries, which illustrate the accuracy of the proposed error estimation strategy and underline that the related adaptive algorithms are capable of regaining optimal convergence rates.

\section{Preliminaries} \label{sec:preliminaries}

\subsection{General Notation}

Throughout this paper, let $\Omega$ be a bounded domain in $\R^d$, $d \in \set{2,3}$, with Lipschitz boundary $\Gamma \coloneqq \partial \Omega$ and exterior unit normal $\bm{n}_\Gamma$. 
We suppose that $\Gamma$ is polygonal and connected.
Let $\Omegaext \coloneqq \R^d \setminus \overline{\Omega}$ be the associated exterior domain. We denote by $\abs{\cdot}$, without any ambiguity, the absolute value of a scalar, the Euclidean norm of a vector in $\R^d$, the cardinality of a (finite) set, the $(d-1)$-dimensional Hausdorff measure of a surface (piece), or the $d$-dimensional Lebesgue volume of a set. 
We write $\int_\Xi \cdot \d x$ for integration over a Lebesgue-measurable set $\Xi \subseteq \R^d$ or over a Hausdorff measurable set $\Xi \subseteq \Gamma$. 
We define $L^2(\Xi)$ as the usual Lebesgue space of square-integrable functions on $\Xi$ equipped with the usual inner product $\dual{v}{w}_\Xi \coloneqq \int_\Xi v w \d x$ and norm $\norm{\cdot}_\Xi \coloneqq \dual{\cdot}{\cdot}_\Xi^{1/2}$. 
For vector-valued spaces $[L^2(\Xi)]^n$, $n \in \N$, we will omit the superscript and write simply $L^2(\Xi)$ if it is clear from the context. 
Related discrete quantities or approximations will be indexed by the same subscript, i.e., $(u_\ell,\phi_\ell)$ is an approximation related to the mesh $\TT_\ell$ etc. 
Finally, we abbreviate notation in proofs and write $A \lesssim B$ if there exists a generic constant $C > 0$, which is clear from the context, such that $A \leq CB$. We write $A \simeq B$ if $A \lesssim B$ and $B \lesssim A$.

\subsection{Function spaces}

For Lipschitz domains ${U} \subset \R^d$, we define the usual first-order Sobolev space $H^1({U})$ as the set of all functions $v \in L^2({U})$ with distributional gradient $\nabla v \in L^2({U})$ and equip it with the norm
\begin{equation*}
 \norm{v}_{H^1({U})} 
 \coloneqq (\norm{v}_{{U}}^2 + \norm{\nabla v}_{{U}}^2)^{1/2}.
\end{equation*} 
{With the distributional divergence operator $\div$, we define}
\begin{equation*}
 H(\div,{U}) 
 \coloneqq \set{\bm{\sigma} \in L^2({U})^d \given \div \bm{\sigma} \in L^2({U})}
\end{equation*}
with the corresponding {graph} norm
\begin{equation*}
 \norm{\bm{\sigma}}_{H(\div,{U})} 
 \coloneqq (\norm{\bm{\sigma}}_{{U}}^2 + \norm{\div \bm{\sigma}}_{{U}}^2)^{1/2}
 \quad \text{for all } \bm{\sigma} \in H(\div,{U}).
\end{equation*}
For {boundary pieces $S \subseteq \Gamma$ we define $H^1(S)$ as the space of all functions $\varphi \in L^2(S)$ with distributional surface gradient $\nabla_\Gamma \varphi \in L^2(S)$ and equip it with the norm
\begin{equation*}
  \norm{\varphi}_{H^1({S})} 
  \coloneqq (\norm{\varphi}_{{S}}^2 + \norm{\nabla_\Gamma \varphi}_{{S}}^2)^{1/2}.
\end{equation*}
}We define the $H^{1/2}({S})$ seminorm
\begin{equation*}
 \abs{\varphi}_{H^{1/2}({S})} 
 \coloneqq \Bigl( \int_{{S}} \int_{{S}} \frac{\abs{\varphi(x) - \varphi(y)}^2}{\abs{x-y}^{d}} \d x \d y \Bigr)^{1/2},
\end{equation*}
the norm 
\begin{equation*}
 \norm{\varphi}_{H^{1/2}({S})} 
 \coloneqq (\norm{\varphi}_{{S}}^2 + \abs{\varphi}_{H^{1/2}({S})}^2)^{1/2},
\end{equation*}
and $H^{1/2}({S}) \coloneqq \set{\varphi \in L^2({S}) \given \norm{\varphi}_{H^{1/2}({S})} < \infty}$.
We recall that $H^{1/2}(\Gamma)$ is the trace space of $H^1(\Omega)$. 
For ${S} = \Gamma$, we define its dual space
\begin{equation*}
 H^{-1/2}(\Gamma) 
 \coloneqq (H^{1/2}(\Gamma))^*.
\end{equation*}
The duality bracket with respect to $H^{-1/2}(\Gamma) \times H^{1/2}(\Gamma)$ is formally defined by continuous extension of the usual $L^2(\Gamma)$-inner product, i.e., 
\begin{equation*}
 \dual{\psi}{\varphi}_{H^{-1/2}(\Gamma) \times H^{1/2}(\Gamma)}
 = \dual{\psi}{\varphi}_{\Gamma} 
 \quad \text{for all } \psi \in L^2(\Gamma) \text{ and } \varphi \in H^{1/2}(\Gamma).
\end{equation*}
To abbreviate notation, we will simply write $\dual{\cdot}{\cdot}_\Gamma$ if it is clear from the context.
We note that every $\bm{\sigma} \in H(\div,\Omega)$ admits a well-defined normal trace $\bm{\sigma}|_\Gamma \cdot \bm{n}_\Gamma \in H^{-1/2}(\Gamma)$. 
In fact, given $g \in H^{1/2}(\Gamma)$ and $w \in H^1(\Omega)$ with $w|_\Gamma = g$, there holds
\begin{equation} \label{eq:green}
 \dual{\bm{\sigma}|_\Gamma \cdot \bm{n}_\Gamma}{g}_\Gamma
 = \dual{\bm{\sigma}}{\nabla w}_\Omega + \dual{\div \bm{\sigma}}{w}_\Omega.
\end{equation}
It is well-known that the above definition is independent of the choice of $w$.

Considering the unbounded domain $\Omegaext$, we introduce the weight function $\rho \colon \Omegaext \rightarrow \R$ defined by 
\begin{equation*}
 \rho(x) 
 \coloneqq 
 \begin{cases} 
  (1 + |x|)^{-1}(1 + \log(1 + |x|))^{-1} & \text{for } d = 2, \\ 
  (1 + |x|)^{-1} & \text{for } d = 3. 
 \end{cases}
\end{equation*}
With this, we define
\begin{equation} \label{eq:H1ext}
 H^1_{\rho}(\Omegaext)
 \coloneqq \set{v \colon \Omegaext \rightarrow \R \given \rho v \in L^2(\Omegaext), \nabla v \in L^2(\Omegaext)}
\end{equation}
as well as 
\begin{equation} \label{eq:Hdivext}
 H^1_\rho(\Delta,\Omegaext)
 \coloneqq \set{v \in {H_\rho}^1(\Omegaext) \given \Delta v = 0}
\end{equation}
with the corresponding norms
\begin{equation*}
 \norm{v}_{H^1_\rho(\Delta,\Omegaext)}
 \coloneqq \norm{v}_{H^1_\rho(\Omegaext)}
 \coloneqq (\norm{\rho v}_{\Omegaext}^2 + \norm{\nabla v}_{\Omegaext}^2)^{1/2}.
\end{equation*}
Finally, we define the solution space for {the transmission} problem \eqref{eq:nltransmission} as
\begin{equation*}
 \HH 
 \coloneqq \set{v \colon \R^d \setminus \Gamma \to \R \given v|_\Omega \in H^1(\Omega) \text{ and } v|_{\Omegaext} \in H^1_\rho(\Delta,\Omegaext)}
\end{equation*}
with the associated norm
\begin{equation*}
 \norm{v}_{\HH}
 \coloneqq \norm{v|_\Omega}_{H^1(\Omega)} + \norm{v|_{\Omegaext}}_{H^1_\rho(\Delta,\Omegaext)}.
\end{equation*}

\subsection{Boundary integral operators}

We define the usual interior and exterior trace operators, i.e.,
\begin{equation*}
 \begin{split}
  \gammaint_0 &\colon H^1(\Omega) \to H^{1/2}(\Gamma), 
  \qquad \, {\gammaint_0 v \coloneqq v|_\Gamma}, \\
  \gammaext_0 &\colon H^1_\rho(\Omegaext) \to H^{1/2}(\Gamma),
  \quad {\gammaext_0 v \coloneqq v|_\Gamma},
 \end{split}
\end{equation*}
as well as the exterior conormal derivative
\begin{equation*}
 \begin{split}
  \gammaext_1 &\colon H^1_\rho(\Delta,\Omegaext) \to H^{-1/2}(\Gamma)
 \end{split}
\end{equation*}
defined via Green's theorem{$\colon$} For $g \in H^{1/2}(\Gamma)$ and $w \in H^1_\rho(\Omegaext)$ with $\gammaext_0 w = g$, {we set}
\begin{equation} \label{eq:conormaldefext}
 \dual{\gammaext_1 {v}}{g}_\Gamma
 \coloneqq - \dual{\nabla {v}}{\nabla w}_\Omega.
\end{equation}
In particular, there holds
\begin{equation} \label{eq:harmonicnorm}
 \norm{\nabla v}_{L^2(\Omegaext)}^2 
 = - \dual{\gammaext_1 v}{\gammaext_0 v}_\Gamma.
\end{equation}
We note that for sufficiently smooth functions $v$, the conormal derivative coincides with the usual normal derivative on the boundary, i.e., $\gammaext_1 {v} =  -\nabla {v} \cdot \bm{n}_\Gamma$. 
We use the convention that for $v \in \HH$, the traces $\gammaint_0 v$ and $\gammaext_0 v$ as well as the conormal derivative $\gammaext_1 v$ are defined by restriction to the respective domains, e.g., ${\gammaext_0} v \coloneqq {\gammaext_0} (v|_{\Omegaext})$, {and note that $\gammaint_0 v \neq \gammaext_0 v$ in general.}

Let $G$ be the fundamental solution of the Laplace equation, i.e., 
\begin{equation*}
 G(x) 
 \coloneqq 
 \begin{cases} 
  -\frac{1}{2\pi}\log(|x|) & \text{for } d = 2, \\ 
  \frac{1}{4\pi|x|} & \text{for } d = 3.
 \end{cases}
\end{equation*}
Define the single-layer potential
\begin{equation*}
 \widetilde{V} \colon H^{-1/2}(\Gamma) \to \HH, 
 \quad (\widetilde{V} \phi)(x) 
 \coloneqq \int_\Gamma G(x-y) \phi(y) \d y
\end{equation*}
as well as the double-layer potential
\begin{equation*}
 \widetilde{K} \colon H^{1/2}(\Gamma) \to \HH, 
 \quad (\widetilde{K} g)(x) 
 \coloneqq \int_\Gamma (\nabla_y G(x-y) \cdot \bm{n}_\Gamma) g(y) \d y.
\end{equation*}
Furthermore, define the usual boundary integral operators used in the context of the boundary element method {via}
\begin{equation} \label{eq:mapprop}
 \begin{split}
  V \colon H^{- 1/2}(\Gamma) \rightarrow H^{1/2}(\Gamma), 
  \quad (V \phi)(x) 
  &\coloneqq \int_{\Gamma} G(x - y)\phi(y) \, \d y, \\
  W \colon H^{1/2}(\Gamma) \rightarrow H^{1/2}(\Gamma), 
  \quad (Wg)(x) 
  &\coloneqq - \nabla_x\Bigl( \int_\Gamma (\nabla_y G(x - y) \cdot \bm{n}_\Gamma) g(y) \d y\Bigr) \cdot \bm{n}_\Gamma, \\
  K \colon H^{1/2}(\Gamma) \rightarrow H^{1/2}(\Gamma), 
  \quad (Kg)(x) 
  &\coloneqq \int_\Gamma (\nabla_y G(x - y) \cdot \bm{n}_\Gamma) g(y) \d y, \\
  K' \colon H^{1/2}(\Gamma) \rightarrow H^{1/2}(\Gamma), 
  \quad (K' \phi)(x) 
  &\coloneqq \nabla_x \Bigl(\int_\Gamma G(x - y) \phi(y) \d y\Bigr) \cdot \bm{n}_\Gamma,
 \end{split}
\end{equation}
{where the integral representations are understood in the formal sense; see, e.g.,~\cite[Section~7]{McLean2000} for details.}
We stress that the operators above are {well-defined, linear, and} continuous {between the respective spaces}.
Moreover, $V$ is elliptic provided $\mathrm{diam}(\Omega) < 1$ for $d=2$; see \cite[Theorem~7.6]{McLean2000}.
We have the {following} elementary relations between the integral operators$\colon$
\begin{equation} \label{eq:BIErel}
 \begin{split}
  &\gamma_0^{\mathrm{int}} \widetilde{V}
  {= V} 
  = \gamma_0^{\mathrm{ext}} \widetilde{V}, 
  \quad \gamma_1^{\mathrm{ext}} \widetilde{K} 
  = -W, \\
  \gamma_0^{\mathrm{int}} \widetilde{K} 
  = K &- 1/2, 
  \quad \gamma_0^{\mathrm{ext}} \widetilde{K} 
  = K + 1/2,
  \quad \gamma_1^{\mathrm{ext}} \widetilde{V} 
  = K' - 1/2;
 \end{split}
\end{equation}
see, e.g., \cite[Section~6]{McLean2000}. 
Lastly, we introduce the interior and exterior {Poincaré--Steklov} operators by
\begin{equation} \label{eq:PS}
 \begin{split}
  S^{\mathrm{int}} \coloneqq V^{-1}(K + 1/2), \\
  S^{\mathrm{ext}} \coloneqq V^{-1}(K - 1/2),
 \end{split}
\end{equation}
respectively. It is well-known that for $w \in H^1(\Omega)$ with $\Delta w = 0$ in $\Omega$, there holds $S^{\mathrm{int}}(w|_\Gamma) = \nabla w|_\Gamma \cdot \bm{n}_\Gamma$, {i.e., $S^{\mathrm{int}}$ is the Dirichlet-to-Neumann map for the interior problem.} 
$S^{\mathrm{ext}}$ has an analogous property in $\Omegaext$; see, e.g., \cite[Section~7]{McLean2000}. 
Moreover, $S^{\mathrm{int}}$ and $-S^{\mathrm{ext}}$ are continuous and elliptic, i.e., there exist constants {$\const{c}{PS}^{\mathrm{int}},\const{C}{PS}^{\mathrm{int}}, \const{c}{PS}^{\mathrm{ext}},\const{C}{PS}^{\mathrm{ext}} > 0$} depending only on $\Gamma$ such that 
\begin{equation} \label{eq:PSprop}
 \begin{split}
  {\const{c}{PS}^{\mathrm{int}}} \norm{g}_{H^{1/2}(\Gamma)}^2 
  &\leq \dual{S^{\mathrm{int}} g}{g}_\Gamma 
  \leq {\const{C}{PS}^{\mathrm{int}}} \norm{g}_{H^{1/2}(\Gamma)}^2, \\
  {\const{c}{PS}^{\mathrm{ext}}} \norm{g}_{H^{1/2}(\Gamma)}^2 
  &\leq -\dual{S^{\mathrm{ext}} g}{g}_\Gamma 
  \leq {\const{C}{PS}^{\mathrm{ext}}} \norm{g}_{H^{1/2}(\Gamma)}^2
  \quad \text{for all } g \in H^{1/2}(\Gamma);
 \end{split}
\end{equation}
see, e.g., \cite[Lemma~2.1]{Steinbach2008}.

\subsection{Meshes and discrete spaces}

Let $\TT_\ell$ be a conforming triangulation of $\Omega$, i.e., a partition of $\overline{\Omega}$ into compact triangles ($d = 2$) or tetrahedra ($d=3$), such that the intersection of two different elements is either empty, a common vertex, a common edge, or a common face (for $d = 3$). 
Throughout this paper, suppose that all considered meshes are obtained by finitely many steps of newest-vertex bisection of some conforming initial mesh $\TT_0$ of $\Omega$.
We refer to~\cite{Stevenson2008,Karkulik2012,Diening2025} for comprehensive treatments of the newest-vertex bisection routine.
Clearly, $\TT_\ell$ induces a conforming boundary mesh 
\begin{equation*}
 \TT_\ell^\Gamma 
 \coloneqq \TT_\ell|_\Gamma 
 \coloneqq \set{T \cap \Gamma \given T \in \TT_\ell \text{ with } \abs{T \cap \Gamma} > 0}.
\end{equation*}
It is well-known that newest-vertex bisection guarantees uniform $\kappa$-shape regularity of $\TT_\ell$ and $\TT_\ell^\Gamma$, i.e., there exists $\kappa > 0$ depending only on $\TT_0$ such that 
\begin{equation*}
 \sup_{T \in \TT_\ell} \frac{\mathrm{diam}(T)}{\abs{T}^{1/d}} 
 + \sup_{F \in \TT_\ell^\Gamma} \frac{\mathrm{diam}(F)}{\abs{F}^{1/(d-1)}} 
 + \sup_{\substack{F,F' \in \TT_\ell^\Gamma \\ F \cap F' \neq \emptyset}} \frac{\mathrm{diam}(F)}{\mathrm{diam}(F')}
 \leq \kappa 
 \quad \text{for all } \TT_\ell.
\end{equation*}
We define $\NN_\ell$ as the set of all vertices $z$ of $\TT_\ell$ and $\NN_\ell^\Gamma$ as the set of all vertices of $\TT_\ell^\Gamma$. 
For some set $U \subseteq \overline{\Omega}$ we define the patch of $U$ as $\TT_\ell[U] \coloneqq \set{T \in \TT_\ell \given T \cap U \neq \emptyset}$ and, recursively, the $k$-patch of $U$ as
\begin{equation*}
 \TT_\ell^{[k]}[U] 
 \coloneqq 
 \begin{cases}
  \TT_\ell[U] & \text{for } k = 1, \\
  \TT_\ell[\TT_\ell^{[{k-1}]}[U]] & \text{for } k > 1.
 \end{cases}
\end{equation*}
Additionally, we define $\Omega_\ell^{[k]}[U] \coloneqq \displaystyle{\bigcup \TT_\ell^{[k]}[U]} \subseteq \overline{\Omega}$. 
If $U = \set{x}$ is a singleton, we {abbreviate} $\TT_\ell^{[k]}[x] \coloneqq \TT_\ell^{[k]}[\set{x}]$ and $\Omega_\ell^{[k]}[x] \coloneqq \Omega_\ell^{[k]}[\set{x}]$. 
Patches of sets {$S \subseteq \Gamma$} with respect to the boundary mesh $\TT_\ell^\Gamma$ on $\Gamma$ are defined accordingly, {e.g., $\TT_\ell^\Gamma[S] \coloneqq \set{F \in \TT_\ell^\Gamma \given F \cap S \neq \emptyset}$, $\Gamma_\ell^{[k]}[S] \coloneqq \bigcup \TT_\ell^{\Gamma,[k]}[S]$ etc.}

Lastly, we define the piecewise constant mesh-width function $h_\ell \in L^\infty(\Omega)$ by $h_\ell|_T \coloneqq h_T \coloneqq \mathrm{diam}(T)$ for all $T \in \TT_\ell$. The quantities $h_E$ and $h_F$ for edges $E$ and facets $F$, respectively, are defined accordingly.

For $r \in \N_0$ and a set $U \subseteq \R^d${,} we define $\P^r(U)$ as the space of all polynomials of degree at most $r$ on $U$. 
Moreover, we define the usual FEM spaces of degree $p \geq 1$ with respect to $\TT_\ell$ and $\TT_\ell^\Gamma$ by
\begin{equation*}
 \begin{split}
  \SS^p(\TT_\ell) 
  &\coloneqq \set{v \in H^1(\Omega) \given v|_T \in \P^p(T) \text{ for all } T \in \TT_\ell}, \\
  \SS^p(\TT_\ell^\Gamma) 
  &\coloneqq \set{v \in H^1(\Gamma) \given v|_F \in \P^p(F) \text{ for all } F \in \TT_\ell^\Gamma},
 \end{split}
\end{equation*}
as well as
\begin{equation*}
 \SS^p_0(\TT_\ell) 
 \coloneqq \set{v \in \SS^p(\TT_\ell) \given \gamma_0^{\mathrm{int}} v = 0}.
\end{equation*}
Additionally, we introduce the hat functions $(\zeta_\ell^z)_{z \in \NN_\ell} \subset \SS^1(\TT_\ell)$, which are defined by 
\begin{equation} \label{eq:unity}
 \zeta_\ell^z(z') 
 \coloneqq 
 \begin{cases}
  1 & \text{if } z' = z, \\
  0 & \text{if } z' \neq z.
 \end{cases}
\end{equation} 
We note that the hat functions satisfy
\begin{equation} \label{eq:hat}
 \sum_{z \in \NN_\ell \cap T} \zeta_\ell^z|_T 
 = 1 
 \quad \text{for all } T \in \TT_\ell.
\end{equation}
An analogous property holds for all facets $F$ and edges $E$.
We further define the space of (in general discontinuous) piecewise polynomials of degree $q \geq 0$ by
\begin{equation*}
 \begin{split}
  \PP^q(\TT_\ell) 
  &\coloneqq \set{v \in L^2(\Omega) \given v|_T \in \P^q(T) \text{ for all } T \in \TT_\ell}, \\
  \PP^q(\TT_\ell^\Gamma) 
  &\coloneqq \set{v \in L^2(\Gamma) \given v|_F \in \P^q(F) \text{ for all } F \in \TT_\ell^\Gamma},
 \end{split}
\end{equation*}
and the $H(\div,\Omega)$-conforming Brezzi--Douglas--Marini (BDM) space by
\begin{equation*}
 \BB\DD\MM^q(\TT_\ell) 
 \coloneqq \set{\bm{\sigma} \in H(\div,\Omega) \given \bm{\sigma}|_T \in \P^q(T)^d \text{ for all } T \in \TT_\ell}
\end{equation*} 
as well as its subspace with zero normal-trace on the boundary by
\begin{equation*}
 \BB\DD\MM_0^q(\TT_\ell) 
 \coloneqq \set{\bm{\sigma} \in \BB\DD\MM^q(\TT_\ell) \given \bm{\sigma} \cdot \bm{n}_\Gamma = 0 \text{ on } \Gamma}.
\end{equation*}
Discrete spaces with respect to a subset of $\TT_\ell$ are defined accordingly.

Lastly, we define the $L^2$-orthogonal projections
\begin{equation} \label{eq:P0proj}
 \begin{split}
  Q_{\ell,q}^\Omega &\colon L^2(\Omega) \to \PP^q(\TT_\ell), \\
  Q_{\ell,q}^\Gamma &\colon L^2(\Gamma) \to \PP^q(\TT_\ell^\Gamma)
 \end{split}
\end{equation}
and note that also $Q_{\ell,q}^\Omega|_T \colon L^2(T) \to \P^q(T)$ and $Q_{\ell,q}^\Gamma|_F \colon L^2(F) \to \P^q(F)$ are orthogonal projections for all $T \in \TT_\ell$ and $F \in \TT_\ell^\Gamma$.
In the case $q=0$, we will write $Q_\ell^\Omega \coloneqq Q_{\ell,0}^\Omega$ and $Q_\ell^\Gamma \coloneqq Q_{\ell,0}^\Gamma$. 
It is well known that $Q_\ell^\Omega$ and $Q_\ell^\Gamma$ satisfy the following approximation properties$\colon$
\begin{equation} \label{eq:P0approx}
 \begin{split}
  \norm{(1 - Q_\ell^\Omega)v}_{L^2(T)} 
  &\leq \frac{h_T}{\pi} \norm{\nabla v}_{L^2(T)} 
  \quad \text{for all } T \in \TT_\ell \text{ and } v \in H^1(T), \\
  \norm{(1 - Q_\ell^\Gamma)\gammaint_0v}_{L^2(F)} 
  &\leq  {\const{C}{N}} h_F^{1/2} \norm{\nabla v}_{L^2(T_F)} 
  \quad \text{for all } F \in \TT_\ell^\Gamma \text{ and } v \in H^1(T_F),
 \end{split}
\end{equation}
where $T_F \in \TT_\ell$ is the unique element such that $F \subset T_F$ and 
\begin{equation*}
{\const{C}{N}  \coloneqq C_d \Bigl(\frac{h_{T_F}^2 \abs{F}}{h_F \abs{T_F}}\Bigr)^{1/2} \leq \frac{C_d \kappa^{d/2}}{d-1}
\quad \text{with } C_2 \approx 0.88152 \text{ and } C_3 \approx 1.96091;}
\end{equation*}
see, e.g.,~\cite{Payne1960,Nicaise2005}.

\subsection{The symmetric Costabel--Han coupling}

We consider the transmission problem \eqref{eq:nltransmission} with $f \in L^2(\Omega)$, $\const{g}{D} \in H^{1/2}(\Gamma)$, and $\const{g}{N} \in H^{-1/2}(\Gamma)$. 
We consider the symmetric Costabel--Han coupling~\cite{Costabel1987,Han1990}.
{The latter employs a so-called} direct ansatz for the exterior solution $\uext$ in terms of {the double-layer and single-layer potential}, i.e., 
\begin{equation} \label{eq:context}
 \uext 
 {= \widetilde{K}(\gammaext_0 \uext) - \widetilde{V}(\gammaext_1 \uext)}
 \eqreff{eq:tmdir}{=} \widetilde{K}(\gammaint_0 u - \const{g}{D}) - \widetilde{V}\phi
\end{equation}
for some unknown density $\phi {\coloneqq \gammaext_1 \uext} \in H^{-1/2}(\Gamma)$.
{Applying $\gammaext_0$ to $\uext$ (see~\eqref{eq:BIErel}) yields
\begin{subequations}
\begin{equation} \label{eq:dirderiv}
 (1/2 - K)\gammaint_0 u + V\phi = (1/2 - K)\const{g}{D}.
\end{equation}
Applying $\gammaext_1$ to $\uext$ (see~\eqref{eq:BIErel}) yields
\begin{equation} \label{eq:neuderiv}
 W \gammaint_0 u + (K' - 1/2)\phi  
 = W \const{g}{D}.
\end{equation}
Using~\eqref{eq:tmint},~\eqref{eq:tmneu}, and~\eqref{eq:neuderiv}, integration by parts yields
\begin{equation} 
 \begin{split}
  &\dual{\mathfrak{A} \nabla u}{\nabla v}_\Omega + \dual{W \gammaint_0 u + (K' - 1/2)\phi}{\gammaint_0 v}_\Gamma \\
  &\qquad = \dual{f}{v}_\Omega + \dual{\const{g}{N} + W \const{g}{D}}{\gammaint_0 v}_\Gamma
 \end{split}
\end{equation}
for all $v \in H^1(\Omega)$. 
\end{subequations}
Combining this with~\eqref{eq:dirderiv} and setting}
\begin{equation} \label{eq:lhs}
 \begin{split}
  b\bigl((u,\phi),(v,\psi)\bigr) 
  &\coloneqq \dual{\mathfrak{A} \nabla u}{\nabla v}_\Omega +{\dual{W \gammaint_0 u + (K' - 1/2)\phi}{\gammaint_0 v}_\Gamma} \\
  &\qquad + {\dual{\psi}{(1/2 - K)\gammaint_0 u + V\phi}}_\Gamma
 \end{split}
\end{equation}
and
\begin{equation} \label{eq:rhs}
 F(v,\psi) 
 \coloneqq \dual{f}{v}_\Omega + {\dual{\const{g}{N} + W \const{g}{D}}{\gammaint_0 v}_\Gamma + \dual{\psi}{(1/2 - K)\const{g}{D}}}_\Gamma
\end{equation}
for all $(u,\phi),(v,\psi) \in H^1(\Omega) \times H^{-1/2}(\Gamma)$, the {Costabel--Han coupling} formulation of~\eqref{eq:nltransmission} reads {as follows}$\colon$ 
Find $(u,\phi) \in H^1(\Omega) \times H^{-1/2}(\Gamma)$ such that
\begin{equation} \label{eq:symcoupbil}
 b\bigl((u,\phi),(v,\psi)\bigr) 
 = F(v,\psi) 
 \quad \text{for all } (v,\psi) \in H^1(\Omega) \times H^{-1/2}(\Gamma).
\end{equation}
Under the foregoing assumptions,~\eqref{eq:symcoupbil} admits a unique solution $(u,\phi) \in H^1(\Omega) \times H^{-1/2}(\Gamma)$; see, e.g.,~\cite{Aurada2012a}.

\begin{remark} \label{rem:comp}
The solution $(u,\uext)$ of the symmetric coupling~\eqref{eq:symcoupbil} also solves~\eqref{eq:tmint}--\eqref{eq:tmneu} in the weak sense.
While for $d=3$, the properties of the layer potentials ensure that $\uext$ satisfies the radiation condition~\eqref{eq:tmrad}, there holds $\abs{\uext(x)} = \OO(\log|x|)$ as $|x| \to \infty$ for $d=2$. 
However, provided
\begin{equation*}
 \dual{f}{1}_\Omega + \dual{\const{g}{N}}{1}_\Gamma 
 = 0,
\end{equation*}
the exterior solution $\uext$ satisfies the radiation condition~\eqref{eq:tmrad} also for $d=2$; see, e.g.,~\cite[Section~8]{McLean2000}.
\end{remark}

\subsection{Galerkin FEM-BEM coupling}

Given a triangulation $\TT_\ell$ of $\Omega$ and its induced boundary mesh $\TT_\ell^\Gamma$, the Galerkin FEM-BEM coupling method seeks approximations $(u_\ell,\phi_\ell) \in \SS^p(\TT_\ell) \times \PP^q(\TT_\ell^\Gamma)$, which solve the discrete problem
\begin{equation} \label{eq:discprob}
 b\bigl((u_\ell,\phi_\ell),(v_\ell,\psi_\ell)\bigr) 
 = F(v_\ell,\psi_\ell) 
 \quad \text{for all } (v_\ell,\psi_\ell) \in \SS^p(\TT_\ell) \times \PP^q(\TT_\ell^\Gamma)
\end{equation}
{associated with~\eqref{eq:symcoupbil}}.
By the same arguments as for the continuous case,~\cite{Aurada2012a} proved that~\eqref{eq:discprob} admits a unique solution $(u_\ell,\phi_\ell) \in \SS^p(\TT_\ell) \times \PP^q(\TT_\ell^\Gamma)$.
{While immaterial for the \textsl{a~posteriori} error estimate presented in the following section, we solve~\eqref{eq:discprob} in the numerical experiments of Section~\ref{sec:numerics}.}

\section{A posteriori error estimation} \label{sec:aposteriori}

In this section, we introduce functional upper bounds for the potential error in the interior as well as in the exterior domain. 
Throughout the remainder of this paper, we assume that $g_D \in H^1(\Gamma)$ and $g_N \in L^2(\Gamma)$.
Let $u_\ell \in H^1(\Omega)$ with $\gamma_0^{\mathrm{int}} u_\ell \in H^1(\Gamma)$ and $\phi_\ell \in L^2(\Gamma)$ be arbitrary conforming approximations $u_\ell \approx u$ and $\phi_\ell \approx \phi$. 
Additionally, we introduce the approximation
\begin{equation} \label{eq:discext}
 \uext_\ell 
 \coloneqq \widetilde{K}(\gammaint_0 u_\ell - \const{g}{D}) - \widetilde{V}\phi_\ell \approx u^{\mathrm{ext}}.
\end{equation}
We introduce the interior error quantity
\begin{equation*}
 \norm{\nabla u - \nabla u_\ell}_{\AAA}^2 
 \coloneqq \dual{\AAA \nabla u - \AAA \nabla u_\ell}{\nabla (u - u_\ell)}_{\Omega}
 \eqreff*{eq:lipell}{\simeq} \norm{\nabla u - \nabla u_\ell}_{\Omega}^2.
\end{equation*}
The full potential error is then quantified by 
\begin{equation} \label{eq:fullerror}
 \EE(u_\ell,\phi_\ell) 
 \coloneqq (\norm{\nabla u - \nabla u_\ell}_{\AAA}^2 + \norm{\nabla u^{\mathrm{ext}} - \nabla \uext_\ell}_{\Omegaext}^2)^{1/2}.
\end{equation}
We emphasize that, even for $u_\ell$ and $\phi_\ell$ being discrete functions, $\uext_\ell$ is not a discrete function and hence lacks basic properties of discrete functions like certain inverse estimates.
However, due to the properties of the layer potentials, we have that $\uext_\ell \in H^1_\rho(\Delta,\Omegaext)$, i.e., $\Delta \uext_\ell = 0$. 
Functional upper bounds are therefore highly suitable for controlling~\eqref{eq:fullerror}, as they do not require any \textsl{a~priori} knowledge of the approximations, but build only on the harmonicity of $\uext_\ell$.

\subsection{Error identity}

The main ingredient of the proposed upper bound is the following error identity involving the bilinear form $b(\cdot,\cdot)$ emerging from the symmetric coupling~\eqref{eq:symcoupbil}. 
The proof uses only Green's formula for harmonic functions in exterior domains~\eqref{eq:harmonicnorm} and properties of the layer potentials~\eqref{eq:BIErel}.
\begin{lemma} \label{lem:errid}
{Given any conforming approximation $(u_\ell,\phi_\ell) \in H^1(\Omega) \times H^{-1/2}(\Gamma)$, let}
\begin{equation} \label{eq:auxapprox}
 \begin{split}
  \widetilde{u}_\ell 
  \coloneqq u_\ell + \abs{\Omega}^{-1} \dual{u - u_\ell}{1}_\Omega 
  \quad \text{and} \quad 
  {\widetilde{u}_\ell^{\mathrm{ext}}}
  \coloneqq \widetilde{K}(\gammaint_0 \widetilde{u}_\ell - \const{g}{D}) - \widetilde{V}\phi_\ell.
 \end{split}
\end{equation}
{Then, there holds the error identity}
\begin{equation} \label{eq:errid}
 \begin{split}
  \EE(u_\ell,\phi_\ell)^2 
  = F\bigl(u - \widetilde{u}_\ell, \gammaext_1(u^{\mathrm{ext}} - \widetilde{u}_\ell^{\mathrm{ext}})\bigr) - b\bigl( (\widetilde{u}_\ell,\phi_\ell),(u - \widetilde{u}_\ell,\gammaext_1(u^{\mathrm{ext}} - \widetilde{u}_\ell^{\mathrm{ext}}))\bigr).
 \end{split}
\end{equation}
\end{lemma}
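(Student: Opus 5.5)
The plan is to insert the exact solution into the right-hand side of~\eqref{eq:errid} and then reduce everything to Green's formula~\eqref{eq:harmonicnorm} for a single exterior harmonic function. Abbreviate $e \coloneqq u - \widetilde{u}_\ell \in H^1(\Omega)$, $\delta \coloneqq \phi - \phi_\ell$, and $z \coloneqq \uext - \widetilde{u}^{\mathrm{ext}}_\ell$, so that the test pair in~\eqref{eq:errid} is $(e,\psi)$ with $\psi \coloneqq \gammaext_1 z$.

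\textbf{Step 1: linearity.} Since $(u,\phi)$ solves~\eqref{eq:symcoupbil}, we have $F(e,\psi) = b((u,\phi),(e,\psi))$. The form $b$ is linear in its second argument. In its first argument, only the term $\dual{\AAA\nabla u}{\nabla v}_\Omega$ is nonlinear; all boundary terms are linear. Hence the right-hand side of~\eqref{eq:errid} equals
\begin{equation*}
 \dual{\AAA\nabla u - \AAA\nabla \widetilde{u}_\ell}{\nabla e}_\Omega
 + \dual{W\gammaint_0 e + (K'-1/2)\delta}{\gammaint_0 e}_\Gamma
 + \dual{\psi}{(1/2-K)\gammaint_0 e + V\delta}_\Gamma .
\end{equation*}
Because $\widetilde{u}_\ell - u_\ell$ is constant, $\nabla\widetilde{u}_\ell = \nabla u_\ell$. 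Therefore the volume term is exactly $\norm{\nabla u - \nabla u_\ell}_{\AAA}^2$.

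\textbf{Step 2: the boundary terms.} Subtracting~\eqref{eq:auxapprox} from~\eqref{eq:context} gives $z = \widetilde{K}(\gammaint_0 e) - \widetilde{V}\delta$, which is harmonic in $\Omegaext$. By~\eqref{eq:BIErel},
\begin{equation*}
 \psi = \gammaext_1 z = -W\gammaint_0 e - (K'-1/2)\delta
 \quad\text{and}\quad
 \gammaext_0 z = (K+1/2)\gammaint_0 e - V\delta .
\end{equation*}
The first identity turns the second boundary term into $-\dual{\psi}{\gammaint_0 e}_\Gamma$. The second identity gives $(1/2-K)\gammaint_0 e + V\delta = \gammaint_0 e - \gammaext_0 z$. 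The two boundary terms therefore sum to $-\dual{\gammaext_1 z}{\gammaext_0 z}_\Gamma$, which equals $\norm{\nabla z}_{\Omegaext}^2$ by~\eqref{eq:harmonicnorm}.

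\textbf{Step 3: identify $\nabla z$.} It remains to show $\nabla z = \nabla(\uext - \uext_\ell)$. Indeed, $\widetilde{u}^{\mathrm{ext}}_\ell - \uext_\ell = \widetilde{K}c$ with the constant $c = \abs{\Omega}^{-1}\dual{u-u_\ell}{1}_\Omega$. The double-layer potential of a constant vanishes in $\Omegaext$ (Gauss' formula). This is consistent with~\eqref{eq:BIErel}, since $K1 = -1/2$ gives $\gammaext_0\widetilde{K}1 = 0$ and $\gammaext_1\widetilde{K}1 = -W1 = 0$. Adding the interior and exterior contributions yields $\EE(u_\ell,\phi_\ell)^2$.

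\textbf{Main obstacle.} I expect the main difficulty to be justifying~\eqref{eq:harmonicnorm} for $z$, i.e.\ $z \in H^1_\rho(\Delta,\Omegaext)$ with $\nabla z \in L^2(\Omegaext)$. This is delicate for $d=2$, where $\widetilde{V}\delta$ grows logarithmically unless $\dual{\delta}{1}_\Gamma = 0$. I would first rely on the paper's standing claim that the layer potentials map into $\HH$. If that is not enough, I would apply~\eqref{eq:harmonicnorm} on bounded domains $B_R\setminus\overline\Omega$ and track the boundary term on $\partial B_R$ as $R\to\infty$, using the decay of the layer potentials (and the mean-zero condition of Remark~\ref{rem:comp} where needed).
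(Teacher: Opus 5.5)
Your proof is correct and is essentially the paper's argument. You use the same ingredients: $F = b((u,\phi),\cdot)$, the jump relations \eqref{eq:BIErel} to collapse the boundary terms into $-\dual{\gammaext_1 z}{\gammaext_0 z}_\Gamma$, and \eqref{eq:harmonicnorm}. The paper only reorders this, first proving the identity for $u_\ell$ itself and then showing $\EE(\widetilde{u}_\ell,\phi_\ell) = \EE(u_\ell,\phi_\ell)$ via equal exterior traces ($W1 = 0 = (K+1/2)1$), which is your Step~3 in trace form; your $d=2$ caveat is not addressed in the paper either, which simply relies on its standing claim that the layer potentials lie in $H^1_\rho(\Delta,\Omegaext)$.
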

\begin{proof}
The proof is split into two steps.

\textbf{Step 1.} We first prove that 
\begin{equation} \label{eq:auxerrid}
 \EE(u_\ell,\phi_\ell)^2 
  = F\bigl(u - u_\ell, \gammaext_1(u^{\mathrm{ext}} - u_\ell^{\mathrm{ext}})\bigr) - b\bigl( (u_\ell,\phi_\ell),(u - u_\ell,\gammaext_1(u^{\mathrm{ext}} - u_\ell^{\mathrm{ext}}))\bigr),
\end{equation}
{which proves~\eqref{eq:errid} if $(u_\ell,u_\ell^{\mathrm{ext}}) = (\widetilde{u}_\ell,\widetilde{u}_\ell^{\mathrm{ext}})$.}
Employing \eqref{eq:harmonicnorm} yields
\begin{equation} \label{eq:extnormesti}
 \begin{split}
  \norm{\nabla u^{\mathrm{ext}} - \nabla \uext_\ell}_{\Omegaext}^2 
  &\eqreff*{eq:harmonicnorm}{=} -\dual{\gamma_1^{\mathrm{ext}} (u^{\mathrm{ext}} - \uext_\ell)}{\gamma_0^{\mathrm{ext}}(u^{\mathrm{ext}} - \uext_\ell)}_{\Gamma} \\
  &= \dual{\gamma_1^{\mathrm{ext}} (u^{\mathrm{ext}} - \uext_\ell)}{\gamma_0^{\mathrm{int}}(u - u_\ell) - \gamma_0^{\mathrm{ext}}(u^{\mathrm{ext}} - \uext_\ell)}_{\Gamma} \\
  &\qquad - \dual{\gamma_1^{\mathrm{ext}} (u^{\mathrm{ext}} - \uext_\ell)}{\gamma_0^{\mathrm{int}} (u - u_\ell)}_\Gamma.
 \end{split}
\end{equation}
The representation formulas~\eqref{eq:context} {and}~\eqref{eq:discext} yield
\begin{equation*}
 \uext - \uext_\ell 
 = \widetilde{K}\gammaint_0(u - u_\ell) - \widetilde{V}(\phi - \phi_\ell).
\end{equation*}
Together with the integral operator identities~\eqref{eq:BIErel}, this yields
\begin{equation*}
 \begin{split}
  \gamma_0^{\mathrm{int}}(u - u_\ell) - \gamma_0^{\mathrm{ext}}(u^{\mathrm{ext}} - u^{\mathrm{ext}}_\ell) 
  &= \gammaint_0(u - u_\ell) - \gammaext_0[\widetilde{K}\gammaint_0(u - u_\ell) - \widetilde{V}(\phi - \phi_\ell)] \\
  &\eqreff*{eq:BIErel}{=} \gammaint_0(u - u_\ell) - (K + 1/2)\gammaint_0(u - u_\ell) + V(\phi - \phi_\ell) \\
  &= (1/2 - K) \gammaint_0(u - u_\ell) + V(\phi - \phi_\ell)
 \end{split}
\end{equation*}
and
\begin{equation*}
 \begin{split}
  \gamma_1^{\mathrm{ext}}(u^{\mathrm{ext}} - u^{\mathrm{ext}}_\ell) 
  &= \gammaext_1[\widetilde{K}\gammaint_0(u - u_\ell) - \widetilde{V}(\phi - \phi_\ell)] \\
  &\eqreff*{eq:BIErel}{=} -W\gamma_0^{\mathrm{int}}(u - u_\ell) - (K' - 1/2)(\phi - \phi_\ell).
 \end{split}
\end{equation*}
Together with \eqref{eq:extnormesti}, this leads to
\begin{equation*}
 \begin{split}
  \norm{\nabla u^{\mathrm{ext}} - \nabla \uext_\ell}_{\Omegaext}^2 
  &= \dual{\gamma_1^{\mathrm{ext}} (u^{\mathrm{ext}} - \uext_\ell)}{(1/2 - K)\gammaint_0(u - u_\ell) + V(\phi - \phi_\ell)}_{\Gamma} \\
  &\qquad + \dual{(K' - 1/2)(\phi - \phi_\ell) + W\gamma_0^{\mathrm{int}}(u - u_\ell)}{\gamma_0^{\mathrm{int}}(u - u_\ell)}_\Gamma.
 \end{split}
\end{equation*}
By~\eqref{eq:symcoupbil} and $\norm{\nabla u - \nabla u_\ell}_{\AAA}^2 = \dual{\AAA \nabla u - \AAA \nabla u_\ell}{\nabla (u - u_\ell)}_{\Omega}$, we obtain
\begin{equation*}
 \begin{split}
  \EE(u_\ell,\phi_\ell)^2
  &= \dual{\AAA \nabla u - \AAA \nabla u_\ell}{\nabla (u - u_\ell)}_{\Omega} + \norm{\nabla u^{\mathrm{ext}} - \nabla \uext_\ell}_{\Omegaext}^2 \\
  &= b\bigl( (u,\phi),(u - u_\ell, \gammaext_1(u^{\mathrm{ext}} - \uext_\ell))\bigr) - b\bigl( (u_\ell,\phi_\ell),(u - u_\ell,\gammaext_1(u^{\mathrm{ext}} - \uext_\ell))\bigr) \\
  &\eqreff*{eq:symcoupbil}{=} F\bigl(u - u_\ell, \gammaext_1(u^{\mathrm{ext}} - \uext_\ell)\bigr) - b\bigl( (u_\ell,\phi_\ell),(u - u_\ell,\gammaext_1(u^{\mathrm{ext}} - \uext_\ell))\bigr).
 \end{split}
\end{equation*}
This concludes the proof of~\eqref{eq:auxerrid}.

\textbf{Step 2.} We now show~\eqref{eq:errid}. 
Since~\eqref{eq:auxerrid} holds for arbitrary approximations $(u_\ell,\phi_\ell) \in H^1(\Omega) \times H^{-1/2}(\Gamma)$, we may replace $u_\ell$ by $\widetilde{u}_\ell$ in~\eqref{eq:auxerrid} to obtain {that}
\begin{equation*}
 \begin{split}
  \EE(\widetilde{u}_\ell,\phi_\ell)^2 
  = F\bigl(u - \widetilde{u}_\ell, \gammaext_1(u^{\mathrm{ext}} - \widetilde{u}_\ell^{\mathrm{ext}})\bigr) 
  - b\bigl( (\widetilde{u}_\ell,\phi_\ell),(u - \widetilde{u}_\ell,\gammaext_1(u^{\mathrm{ext}} - \widetilde{u}_\ell^{\mathrm{ext}}))\bigr).
 \end{split}
\end{equation*}
It remains to show that $\EE(u_\ell,\phi_\ell) = \EE(\widetilde{u}_\ell,\phi_\ell)$. 
Because of $\nabla \widetilde{u}_\ell = \nabla u_\ell$, there holds $\norm{\nabla u - \nabla u_\ell}_\AAA = \norm{\nabla u - \nabla \widetilde{u}_\ell}_\AAA$. 
Due to~\eqref{eq:BIErel} and ${W 1= 0 = (K + 1/2) 1}$ (see, e.g., \cite[Section~8]{McLean2000}), we obtain
\begin{equation*}
 \gamma^{\mathrm{ext}}_0 \widetilde{u}^{\mathrm{ext}}_\ell 
 \eqreff*{eq:BIErel}{=} (K + 1/2)(\gammaint_0\widetilde{u}_\ell - \const{g}{D}) - V\phi_\ell 
 = (K + 1/2)(u_\ell - \const{g}{D}) - V\phi_\ell 
 = \gamma_0^{\mathrm{ext}} \uext_\ell
\end{equation*}
and, {similarly,}
\begin{equation*}
  \begin{split}
   \gamma_1^{\mathrm{ext}} \widetilde{u}^{\mathrm{ext}}_\ell 
   &\eqreff*{eq:BIErel}{=} -W(\gamma_0^{\mathrm{int}}\widetilde{u}_\ell - \const{g}{D}) - (K' - 1/2)\phi_\ell 
   = \gamma_1^{\mathrm{ext}} \uext_\ell.
 \end{split}
\end{equation*}
{This} leads to
\begin{equation*}
 \begin{split}
  \norm{\nabla u^{\mathrm{ext}} - \nabla \uext_\ell}_{\Omegaext}^2 
  &= - \dual{\gamma_1^{\mathrm{ext}}(u^{\mathrm{ext}} - u^{\mathrm{ext}}_\ell)}{\gamma_0^{\mathrm{ext}}(u^{\mathrm{ext}} - u^{\mathrm{ext}}_\ell)}_{\Gamma} \\
  &= - \dual{\gamma_1^{\mathrm{ext}}(u^{\mathrm{ext}} - \widetilde{u}^{\mathrm{ext}}_\ell)}{\gamma_0^{\mathrm{ext}}(u^{\mathrm{ext}} - \widetilde{u}^{\mathrm{ext}}_\ell)}_{\Gamma} 
  = \norm{\nabla u^{\mathrm{ext}} - \nabla \widetilde{u}^{\mathrm{ext}}_\ell}_{\Omegaext}^2.
 \end{split}
\end{equation*}
Putting the foregoing considerations together, {we are led to}
\begin{equation*}
 \begin{split}
  \EE(u_\ell,\phi_\ell)^2 
  &= \norm{\nabla u - \nabla u_\ell}_{\AAA}^2 + \norm{\nabla u^{\mathrm{ext}} - \nabla \uext_\ell}_{\Omegaext}^2
  = \EE(\widetilde{u}_\ell,\phi_\ell)^2 \\
  &= F\bigl(u - \widetilde{u}_\ell, \gammaext_1(u^{\mathrm{ext}} - \widetilde{u}_\ell^{\mathrm{ext}})\bigr) 
  - b\bigl((\widetilde{u}_\ell,\phi_\ell),(u - \widetilde{u}_\ell,\gammaext_1(u^{\mathrm{ext}} - \widetilde{u}_\ell^{\mathrm{ext}}))\bigr).
 \end{split}
\end{equation*}
This concludes the proof.
\end{proof}

\subsection{Functional upper bound}

The following {theorem} states the main result of this work and provides a guaranteed functional upper bound for the full potential error~\eqref{eq:fullerror}.
For {given $\const{g}{D} \in H^1(\Gamma)$ and $\const{g}{N} \in L^2(\Gamma)$, and} arbitrary approximations {$(u_\ell,\phi_\ell) \in H^1(\Omega) \times L^2(\Gamma)$ with $\gammaint_0 u_\ell \in H^1(\Gamma)$,} the upper bound involves the approximate interior normal flux
\begin{equation} \label{eq:Neuapprox}
 \Phi_\ell 
 \coloneqq \const{g}{N} - W(\gammaint_0 u_\ell - \const{g}{D}) - (K' - 1/2)\phi_\ell \in L^2(\Gamma)
\end{equation}
and the Dirichlet residual
\begin{equation} \label{eq:Dirres}
 G_\ell 
 \coloneqq (K - 1/2)(\gammaint_0 u_\ell - \const{g}{D}) - V \phi_\ell \in H^{1}(\Gamma).
\end{equation}
{The existence of computable candidates $\bm{\sigma}_\ell \in H(\div,\Omega)$ and $w_\ell \in H^1_\rho(\Omegaext)$ satisfying the constraints~\eqref{eq:prop} in the following theorem is discussed in the subsequent Sections~\ref{subsec:flux}--\ref{subsec:potential}.}
\begin{theorem}[Functional upper bound] \label{thm:mainresult}
Let $q' \geq 0$, $p' \geq 1$ be polynomial degrees and $J_{\ell,p'} \colon H^1(\Gamma) \to \SS^{p'}(\TT_\ell^\Gamma)$ be an $H^1(\Gamma)$-stable projection. 
{Then, for any} $\bm{\sigma}_\ell \in H(\div,\Omega)$ and {any} $w_\ell \in H^1_\rho(\Omegaext)$ satisfying
\begin{subequations} \label{eq:prop}
\begin{align}
 \div \bm{\sigma}_\ell 
 &= -Q_{\ell,q'}^\Omega f + c \quad \text{for some } c \in \R, \label{eq:divprop} \\
 \bm{\sigma}_\ell \cdot \bm{n}_\Gamma 
 &= Q_{\ell,q'}^\Gamma \Phi_\ell, \label{eq:Neuprop} \\
 \gammaext_0 w_\ell 
 &= J_{\ell,p'} G_\ell, \label{eq:Dirprop}
\end{align}
\end{subequations}
there holds
\begin{equation} \label{eq:funcupperbound}
 \begin{split}
  \EE(u_\ell,\phi_\ell) 
  &\leq \frac{1}{\const{C}{mon}^{1/2}}\norm{\AAA \nabla u_\ell - \bm{\sigma}_\ell}_\Omega + \norm{\nabla w_\ell}_{\Omegaext} + {\const{C}{D}} \norm{h_\ell^{1/2} \nabla_\Gamma (1 - J_{\ell,p'})G_\ell}_{\Gamma} \\
  &\qquad + \frac{1}{\pi \const{C}{mon}^{1/2}}\norm{h_\ell(1 - Q_{\ell,q'}^\Omega)f}_{\Omega}+ {\frac{\const{C}{N}}{\const{C}{mon}^{1/2}}} \norm{h_\ell^{1/2}(1 - Q_{\ell,q'}^\Gamma)\Phi_\ell}_{\Gamma},
 \end{split}
\end{equation}
where $\const{C}{mon}$ is the monotonicity constant of $\AAA$ from \eqref{eq:lipell}, $\const{C}{D}$ is a generic constant depending only on $J_{\ell,p'}$ and $\kappa$-shape regularity, {and $\const{C}{N}$ is the constant from~\eqref{eq:P0approx}.}
\end{theorem}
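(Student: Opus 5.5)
We start from the error identity of Lemma~\ref{lem:errid} and abbreviate $e \coloneqq u - \widetilde{u}_\ell \in H^1(\Omega)$, $e^{\mathrm{ext}} \coloneqq \uext - \widetilde{u}_\ell^{\mathrm{ext}}$ and $\psi \coloneqq \gammaext_1 e^{\mathrm{ext}}$. Note that $\dual{e}{1}_\Omega = 0$ by construction, so the constant $c$ in \eqref{eq:divprop} plays no role. Expanding $F$ and $b$ from \eqref{eq:lhs}--\eqref{eq:rhs}, the right-hand side of \eqref{eq:errid} becomes
\begin{equation*}
 \dual{f}{e}_\Omega - \dual{\AAA\nabla u_\ell}{\nabla e}_\Omega + \dual{\Phi_\ell}{\gammaint_0 e}_\Gamma - \dual{\psi}{G_\ell}_\Gamma .
\end{equation*}
Here we use $\nabla\widetilde{u}_\ell = \nabla u_\ell$ together with $W1 = 0 = (K+1/2)1$. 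The latter lets us replace $\widetilde{u}_\ell$ by $u_\ell$ inside $W$ and $(1/2-K)$, so that the boundary terms are exactly $\Phi_\ell$ from \eqref{eq:Neuapprox} and $-G_\ell$ from \eqref{eq:Dirres}. The plan is to treat the interior part (first three terms) and the exterior part (last term) separately, and then combine them through $\EE^2 = \norm{\nabla e}_\AAA^2 + \norm{\nabla e^{\mathrm{ext}}}_{\Omegaext}^2$.

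\textbf{Interior part.} Add and subtract $\bm{\sigma}_\ell$ and apply Green's formula \eqref{eq:green} with \eqref{eq:divprop}--\eqref{eq:Neuprop}:
\begin{equation*}
 \dual{\bm{\sigma}_\ell}{\nabla e}_\Omega = \dual{Q^\Gamma_{\ell,q'}\Phi_\ell}{\gammaint_0 e}_\Gamma + \dual{Q^\Omega_{\ell,q'}f}{e}_\Omega .
\end{equation*}
The interior part then reduces to
\begin{equation*}
 \dual{\bm{\sigma}_\ell - \AAA\nabla u_\ell}{\nabla e}_\Omega + \dual{(1-Q^\Omega_{\ell,q'})f}{e}_\Omega + \dual{(1-Q^\Gamma_{\ell,q'})\Phi_\ell}{\gammaint_0 e}_\Gamma .
\end{equation*}
In the last two terms we subtract elementwise constants $Q_\ell^\Omega e$ and $Q_\ell^\Gamma \gammaint_0 e$, which is allowed by orthogonality since $q' \ge 0$. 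Applying \eqref{eq:P0approx} and Cauchy--Schwarz gives
\begin{equation*}
 \Bigl(\norm{\AAA\nabla u_\ell - \bm{\sigma}_\ell}_\Omega + \pi^{-1}\norm{h_\ell(1-Q^\Omega_{\ell,q'})f}_\Omega + \const{C}{N}\norm{h_\ell^{1/2}(1-Q^\Gamma_{\ell,q'})\Phi_\ell}_\Gamma\Bigr)\,\norm{\nabla e}_\Omega .
\end{equation*}
For the Neumann term, each facet is paired with its own element, so the sum over facets is bounded by $\norm{\nabla e}_\Omega$. Finally \eqref{eq:ell} gives $\norm{\nabla e}_\Omega \le \const{C}{mon}^{-1/2}\norm{\nabla e}_\AAA$.

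\textbf{Exterior part.} Split $G_\ell = J_{\ell,p'}G_\ell + (1-J_{\ell,p'})G_\ell$. By \eqref{eq:Dirprop} and the definition \eqref{eq:conormaldefext} of $\gammaext_1$ (with Green's formula on $\Omegaext$, and $w_\ell$ as extension),
\begin{equation*}
 -\dual{\psi}{J_{\ell,p'}G_\ell}_\Gamma = \dual{\nabla e^{\mathrm{ext}}}{\nabla w_\ell}_{\Omegaext} \le \norm{\nabla w_\ell}_{\Omegaext}\norm{\nabla e^{\mathrm{ext}}}_{\Omegaext}.
\end{equation*}
The remainder $-\dual{\psi}{(1-J_{\ell,p'})G_\ell}_\Gamma$ is the \textbf{main obstacle}. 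It requires a weighted estimate of the form
\begin{equation*}
 \abs{\dual{\psi}{g}_\Gamma} \lesssim \norm{\nabla e^{\mathrm{ext}}}_{\Omegaext}\,\norm{h_\ell^{1/2}\nabla_\Gamma g}_\Gamma \quad \text{for } g = (1-J_{\ell,p'})G_\ell .
\end{equation*}
The first attempt: use $\abs{\dual{\psi}{g}_\Gamma} \le \norm{\psi}_{H^{-1/2}(\Gamma)}\norm{g}_{H^{1/2}(\Gamma)}$. Bound $\norm{\psi}_{H^{-1/2}(\Gamma)} \lesssim \norm{\nabla e^{\mathrm{ext}}}_{\Omegaext}$ by harmonicity via \eqref{eq:harmonicnorm} and the exterior Steklov ellipticity \eqref{eq:PSprop}, modulo constants; the constant is annihilated if $\dual{g}{1}_\Gamma = 0$ or is otherwise absorbed. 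Then invoke the standard localization estimate $\norm{g}_{H^{1/2}(\Gamma)} \lesssim \norm{h_\ell^{1/2}\nabla_\Gamma g}_\Gamma$ for functions of the form $(1-J_{\ell,p'})v$. This estimate uses the projection property and $H^1$-stability of $J_{\ell,p'}$ together with local approximation on $\kappa$-shape regular meshes (Faermann-type arguments, cf.~\cite{Kurz2019}). This yields $\const{C}{D}$ depending only on $J_{\ell,p'}$ and $\kappa$.

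\textbf{Combination.} Collecting both parts gives $\EE^2 \le A\norm{\nabla e}_\AAA + B\norm{\nabla e^{\mathrm{ext}}}_{\Omegaext}$, where $A$ is the sum of the three interior contributions and $B = \norm{\nabla w_\ell}_{\Omegaext} + \const{C}{D}\norm{h_\ell^{1/2}\nabla_\Gamma(1-J_{\ell,p'})G_\ell}_\Gamma$. By Cauchy--Schwarz in $\R^2$ we get $\EE^2 \le (A^2+B^2)^{1/2}\EE \le (A+B)\,\EE$. Dividing by $\EE$ yields \eqref{eq:funcupperbound}.
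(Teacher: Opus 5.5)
\textbf{The gap is in your reduction of the last term of \eqref{eq:errid}.} From $(K+1/2)1=0$ you conclude that $\widetilde u_\ell$ may be replaced by $u_\ell$ inside $(1/2-K)$. That identity actually gives $(1/2-K)1=1$, not $0$. The correct expansion (Step~1 of the paper's proof) produces $+\dual{\psi}{\widetilde G_\ell}_\Gamma$ with $\widetilde G_\ell = G_\ell - c_0$ and $c_0 \coloneqq \abs{\Omega}^{-1}\dual{u-u_\ell}{1}_\Omega$. The sign is immaterial; the constant is not. Your exterior estimate treats only $\dual{\psi}{G_\ell}_\Gamma$, so it leaves $-c_0\dual{\psi}{1}_\Gamma$ untouched, and this term contains the unknown mean of the error. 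Integrating \eqref{eq:divprop} and using \eqref{eq:Neuprop} gives $c\abs{\Omega} = \dual{f}{1}_\Omega + \dual{\Phi_\ell}{1}_\Gamma$. Testing \eqref{eq:symcoupbil} with $(1,0)$, together with the formula for $\gammaext_1(\uext-\uext_\ell)$ from the proof of Lemma~\ref{lem:errid}, shows that this equals $-\dual{\psi}{1}_\Gamma$. Hence the dropped term is exactly $c\,\dual{u-u_\ell}{1}_\Omega$. The shift by $c_0$ does not make $c$ disappear; it only moves the $c$-term from the interior side to the exterior side. So ``$c$ plays no role'' and ``the residual is $G_\ell$'' cannot both be had for free.

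The paper addresses this in Step~6. It bounds $\dual{\psi}{\widetilde G_\ell}_\Gamma \le \EE(u_\ell,\phi_\ell)\norm{\nabla\widetilde w}_{\Omegaext}$ for every extension $\widetilde w$ of $\widetilde G_\ell$, and then uses that $G_\ell$ and $\widetilde G_\ell$ differ only by a constant, so extensions can be shifted without changing their gradients. In your framework the same repair reads as follows: $(1-J_{\ell,p'})\widetilde G_\ell = (1-J_{\ell,p'})G_\ell$ because $J_{\ell,p'}$ reproduces constants, and $w_\ell - c_0$ extends $J_{\ell,p'}\widetilde G_\ell$ with the same gradient as $w_\ell$. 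Equivalently, $\dual{\psi}{1}_\Gamma = -\dual{\nabla e^{\mathrm{ext}}}{\nabla 1}_{\Omegaext} = 0$ by \eqref{eq:conormaldefext}. Either way you need $1\in H^1_\rho(\Omegaext)$, which holds for $d=2$; there $c=0$ is forced, so your formula is right, but not for the reason you give.

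For $d=3$, constants are not in $H^1_\rho(\Omegaext)$, and the passage from $\widetilde G_\ell$ to $G_\ell$ (yours as well as the paper's) is justified only when $c=0$. In that case you should start directly from the unshifted identity \eqref{eq:auxerrid}. For $c\neq 0$ the bound can in fact fail. Let $(u_\ell,\phi_\ell)$ be the exact solution of \eqref{eq:symcoupbil} with $f$ replaced by $f-t$, $t\neq0$, and take $f\in\PP^{q'}(\TT_0)$. Then $G_\ell=0$, and $\AAA\nabla u_\ell$ satisfies \eqref{eq:divprop} with $c=t$ and has normal trace $\Phi_\ell$. By \eqref{eq:auxerrid}, $\EE(u_\ell,\phi_\ell)^2 = t\,\dual{1}{u-u_\ell}_\Omega>0$. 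Meanwhile, after correcting the normal trace by a harmonic gradient, the right-hand side of \eqref{eq:funcupperbound} consists only of Neumann-oscillation contributions, which vanish under refinement.

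Apart from this point, your argument is the paper's. The interior part is identical. Your $H^{-1/2}$--$H^{1/2}$ duality bound for $(1-J_{\ell,p'})G_\ell$ is the dual form of the minimal-extension estimate the paper takes from \cite{Aurada2015}. The ``modulo constants'' caveat there is unnecessary: any bounded extension operator $H^{1/2}(\Gamma)\to H^1_\rho(\Omegaext)$ gives $\norm{\psi}_{H^{-1/2}(\Gamma)}\lesssim\norm{\nabla e^{\mathrm{ext}}}_{\Omegaext}$ directly.
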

\begin{proof}
Recall $\widetilde{u}_\ell$ and $\widetilde{u}_\ell^{\mathrm{ext}}$ from~\eqref{eq:auxapprox}. Analogously to~\eqref{eq:Neuapprox}--\eqref{eq:Dirres}, we set
\begin{equation} \label{eq:auxres}
 \begin{split}
  \widetilde{\Phi}_\ell 
  &\coloneqq \const{g}{N} - W(\gamma_0^{\mathrm{int}}\widetilde{u}_\ell - \const{g}{D}) - (K' - 1/2)\phi_\ell, \\
  \widetilde{G}_\ell 
  &\coloneqq (K - 1/2)(\gamma_0^{\mathrm{int}}\widetilde{u}_\ell - \const{g}{D}) - V\phi_\ell.
 \end{split}
\end{equation} 
The remainder of the proof is split into six steps.

\textbf{Step~1.} 
By Lemma~\ref{lem:errid} {and the formulation~\eqref{eq:lhs}--\eqref{eq:symcoupbil} of the symmetric coupling}, we obtain
\begin{equation} \label{eq:erridappl}
 \begin{split}
  &\EE(u_\ell,\phi_\ell)^2 
  \eqreff*{eq:errid}{=} F\bigl(u-\widetilde{u}_\ell,\gamma_1^{\mathrm{ext}}(u^{\mathrm{ext}} - \widetilde{u}^{\mathrm{ext}}_\ell)\bigr) - {b}\bigl((\widetilde{u}_\ell,\phi_\ell), (u-\widetilde{u}_\ell,\gamma_1^{\mathrm{ext}}(u^{\mathrm{ext}} - \widetilde{u}^{\mathrm{ext}}_\ell))\bigr) \\
  &\eqreff*{eq:lhs}{=} \dual{f}{u-\widetilde{u}_\ell}_{\Omega} - \dual{\AAA \nabla \widetilde{u}_\ell}{\nabla(u-\widetilde{u}_\ell)}_{\Omega} \\
  &\qquad + \dual{\const{g}{N} - W(\gamma_0^{\mathrm{int}}\widetilde{u}_\ell - \const{g}{D}) - (K' - 1/2)\phi_\ell}{\gamma_0^{\mathrm{int}}(u-\widetilde{u}_\ell)}_{\Gamma} \\
  &\qquad - \dual{\gamma_1^{\mathrm{ext}} (u^{\mathrm{ext}} - \widetilde{u}^{\mathrm{ext}}_\ell)}{(1/2 - K)(\gamma_0^{\mathrm{int}}\widetilde{u}_\ell - \const{g}{D}) + V\phi_\ell}_{\Gamma} \\
  &\eqreff*{eq:auxres}{=} \dual{f}{u-\widetilde{u}_\ell}_{\Omega} - \dual{\AAA \nabla \widetilde{u}_\ell}{\nabla(u-\widetilde{u}_\ell)}_{\Omega} \\
  &\qquad + \dual{\widetilde{\Phi}_\ell}{\gammaint_0(u-\widetilde{u}_\ell)}_{\Gamma} + \dual{\gamma_1^{\mathrm{ext}}(u^{\mathrm{ext}} - \widetilde{u}^{\mathrm{ext}}_\ell)}{\widetilde{G}_\ell}_{\Gamma}.
 \end{split}
\end{equation}

\textbf{Step~2.}
We consider the first three terms on the right-hand side of~\eqref{eq:erridappl}. 
Let $\bm{\sigma}_\ell \in H(\div,\Omega)$ satisfy~\eqref{eq:divprop} and~\eqref{eq:Neuprop}. 
Green's formula~\eqref{eq:green} establishes
\begin{equation*}
 \dual{\bm{\sigma}_\ell}{\nabla v}_{\Omega} 
 = \dual{\bm{\sigma}_\ell|_{\Gamma} \cdot n_{\Gamma}}{\gamma_0^{\mathrm{int}}v}_{\Gamma} - \dual{\div \bm{\sigma}_\ell}{v}_{\Omega} 
 \quad \text{for all } v \in H^1(\Omega).
\end{equation*}
{N}oting that $\nabla u_\ell = \nabla \widetilde{u}_\ell$, we are led to
\begin{equation} \label{eq:sigmaaddsub}
 \begin{split}
  &\dual{f}{u - \widetilde{u}_\ell}_{\Omega} - \dual{\AAA \nabla \widetilde{u}_\ell}{\nabla(u - \widetilde{u}_\ell)}_{\Omega} + \dual{\widetilde{\Phi}_\ell}{\gamma_0^{\mathrm{int}}(u - \widetilde{u}_\ell)}_{\Gamma} \\
  &\qquad = \dual{f + \div \bm{\sigma}_\ell}{u - \widetilde{u}_\ell}_{\Omega} - \dual{\AAA \nabla u_\ell - \bm{\sigma}_\ell}{\nabla(u - u_\ell)}_{\Omega} \\
  &\qquad \quad  + \dual{\widetilde{\Phi}_\ell - \bm{\sigma}_\ell|_{\Gamma} \cdot n_{\Gamma}}{\gamma_0^{\mathrm{int}}(u - \widetilde{u}_\ell)}_{\Gamma}.
  \end{split}
\end{equation}

\textbf{Step~3.} We deal with each of the three terms on the right-hand side of~\eqref{eq:sigmaaddsub} separately, starting with the first term. 
Due to $\dual{u - \widetilde{u}_\ell}{1}_\Omega = 0$, the property~\eqref{eq:divprop} of $\bm{\sigma}_\ell$, and the properties~\eqref{eq:P0proj} and~\eqref{eq:P0approx} of $Q_{\ell,q'}^\Omega$, there holds
\begin{equation*}
 \begin{split}
  &\dual{f + \div \bm{\sigma}_\ell}{u - \widetilde{u}_\ell}_{\Omega} 
  \eqreff*{eq:divprop}{=} \dual{f - Q_{\ell,q'}^\Omega f + c}{u - \widetilde{u}_\ell}_{\Omega}
  \eqreff*{eq:auxapprox}{=} \dual{(1 - Q_{\ell,q'}^\Omega)f}{u - \widetilde{u}_\ell}_{\Omega} \\
  &\quad \eqreff*{eq:P0proj}{=} \sum_{T \in \TT_\ell} \dual{(1 - Q_{\ell,q'}^\Omega)f}{(1 - Q_{\ell,q'}^\Omega)(u - \widetilde{u}_\ell)}_T \\
  &\quad \leq \sum_{T \in \TT_\ell} \bigl(\norm{(1 - Q_{\ell,q'}^\Omega)f}_T \, \norm{(1 - Q_{\ell,q'}^\Omega)(u - \widetilde{u}_\ell)}_T \bigr) \\
  &\quad \eqreff*{eq:P0approx}{\leq} \frac{1}{\pi} \sum_{T \in \TT_\ell} \bigl(h_T \norm{(1 - Q_{\ell,q'}^\Omega)f}_T \, \norm{\nabla(u - u_\ell)}_T\bigr) 
  \eqreff*{eq:lipell}{\leq} \frac{1}{\pi \const{C}{mon}^{1/2}}\norm{h_\ell(1 - Q_{\ell,q'}^\Omega)f}_{\Omega} \, \norm{\nabla u - \nabla u_\ell}_\AAA.
 \end{split}
\end{equation*} 

\textbf{Step~4.}
For the second summand of {the right-hand side of}~\eqref{eq:sigmaaddsub}, we obtain
\begin{equation} \label{eq:intest}
 \begin{split}
 \dual{\AAA \nabla u_\ell - \bm{\sigma}_\ell}{\nabla(u - u_\ell)}_{\Omega} 
 &\leq \norm{\AAA \nabla u_\ell - \bm{\sigma}_\ell}_\Omega \, \norm{\nabla u - \nabla u_\ell}_\Omega \\
 &\eqreff*{eq:lipell}{\leq} \const{C}{mon}^{-1/2} \norm{\AAA \nabla u_\ell - \bm{\sigma}_\ell}_\Omega \, \norm{\nabla u - \nabla u_\ell}_\AAA.
 \end{split}
\end{equation}

\textbf{Step~5.}
In order to estimate the third summand on the right-hand side of~\eqref{eq:sigmaaddsub}, we note that ${W1} = 0$ (see,e.g.,~\cite[Section~8]{McLean2000}) {yields} $\widetilde{\Phi}_\ell = \Phi_\ell$. 
We use the definition of $\widetilde{u}_\ell$~\eqref{eq:auxapprox}, the property~\eqref{eq:Neuprop} of $\bm{\sigma}_\ell$, and the properties of $Q_{\ell,q'}^\Gamma$ from~\eqref{eq:P0proj} and~\eqref{eq:P0approx} to obtain
\begin{equation*}
 \begin{split}
  \dual{\widetilde{\Phi}_\ell - \bm{\sigma}_\ell|_{\Gamma} \cdot \bm{n}_\Gamma}{\gamma_0^{\mathrm{int}}(u - \widetilde{u}_\ell)}_\Gamma
  &\eqreff*{eq:auxapprox}{=} \dual{\Phi_\ell - \bm{\sigma}_\ell|_{\Gamma} \cdot \bm{n}_\Gamma}{\gamma_0^{\mathrm{int}}(u - u_\ell) - |\Omega|^{-1}\dual{u - u_\ell}{1}_{\Omega}}_\Gamma \\
  &\eqreff*{eq:Neuprop}{=}  \dual{(1 - Q_{\ell,q'}^\Gamma)\Phi_\ell}{\gamma_0^{\mathrm{int}}(u - u_\ell)- |\Omega|^{-1}\dual{u - u_\ell}{1}_{\Omega}}_\Gamma \\
  &= \sum_{F \in \TT_\ell^\Gamma} \dual{(1 - Q_{\ell,q'}^\Gamma)\Phi_\ell}{( 1- Q_{\ell,q'}^\Gamma)\gammaint_0(u - u_\ell)}_F \\
  &\leq \sum_{F \in \TT_\ell^\Gamma}  \norm{(1 - Q_{\ell,q'}^\Gamma)\Phi_\ell}_F \, \norm{(1 - Q_{\ell,q'}^\Gamma)\gammaint_0(u - u_\ell)}_F \\
  &\eqreff*{eq:P0approx}{\leq} {\const{C}{N}}\sum_{F \in \TT_\ell^\Gamma}  h_F^{1/2} \norm{(1 - Q_{\ell,q'}^\Gamma)\Phi_\ell}_F \, \norm{\nabla(u - u_\ell)}_{T_F} \\
  &\eqreff*{eq:lipell}{\leq}  \frac{{\const{C}{N}}}{\const{C}{mon}^{1/2}} \norm{h_\ell^{1/2}(1 - Q_{\ell,q'}^\Gamma)\Phi_\ell}_\Gamma \, \norm{\nabla u - \nabla u_\ell}_{\AAA}. \\
 \end{split}
\end{equation*}
Althogether, we obtain
\begin{equation} \label{eq:interrderiv}
 \begin{split}
  &\dual{f}{u - \widetilde{u}_\ell}_{\Omega} - \dual{\AAA \nabla \widetilde{u}_\ell}{\nabla(u - \widetilde{u}_\ell)}_{\Omega} + \dual{\widetilde{\Phi}_\ell}{\gamma_0^{\mathrm{int}}(u - \widetilde{u}_\ell)}_{\Gamma} \\
  &\qquad \leq \EE(u_\ell,\phi_\ell) \bigl[\frac{1}{\const{C}{mon}^{1/2}}\norm{\AAA \nabla u_\ell - \bm{\sigma}}_\Omega + \frac{1}{\pi\const{C}{mon}^{1/2}}\norm{h_\ell (1 - Q_{\ell,q'}^\Omega)f}_\Omega \\
  &\qquad \quad +  \frac{{\const{C}{N}}}{\const{C}{mon}^{1/2}} \norm{h_\ell^{1/2}(1 - Q_{\ell,q'}^\Gamma)\Phi_\ell}_\Gamma\bigr].
 \end{split}
\end{equation}

\textbf{Step~6.}
It thus only remains to estimate the Dirichlet residual, which is the final term on the right-hand side of~\eqref{eq:erridappl}. 
Let $\widetilde{w} \in H^1_\rho(\Omegaext)$ be an arbitrary function with $\gammaext_0 \widetilde{w} = \widetilde{G}_\ell$. 
Since $u^{\mathrm{ext}}, \widetilde{u}^{\mathrm{ext}}_\ell \in H^1_\rho(\Delta,\Omegaext)$, Green's formula~\eqref{eq:green} and the definition of $\widetilde{u}^{\mathrm{ext}}_\ell$ from~\eqref{eq:auxapprox} yield
\begin{equation*}
 \begin{split}
  \dual{\gamma_1^{\mathrm{ext}} (u^{\mathrm{ext}} - \widetilde{u}^{\mathrm{ext}}_\ell)}{\widetilde{G}_\ell}_{\Gamma} 
  &= \dual{\gamma_1^{\mathrm{ext}}(u^{\mathrm{ext}} - \widetilde{u}^{\mathrm{ext}}_\ell)}{\gamma_0^{\mathrm{ext}}\widetilde{w}}_{\Gamma} 
  \eqreff*{eq:green}{=} -\dual{\nabla(u^{\mathrm{ext}} - \widetilde{u}^{\mathrm{ext}}_\ell)}{\nabla \widetilde{w}}_{\Omegaext} \\
  &\leq  \norm{\nabla(u^{\mathrm{ext}} - \uext_\ell)}_{\Omegaext} \, \norm{\nabla \widetilde{w}}_{\Omegaext} 
  \leq \EE(u_\ell,\phi_\ell)\norm{\nabla \widetilde{w}}_{\Omegaext}.
 \end{split}
\end{equation*}
Since $(K - 1/2){1} = 1$ (see, e.g.,~\cite[Section~8]{McLean2000}), there holds $\widetilde{G}_\ell = G_\ell - \abs{\Omega}^{-1}\dual{u - u_\ell}{1}_\Omega$. 
Since $\widetilde{w} \in H^1_\rho(\Omegaext)$ with $\gammaext_0 \widetilde{w} = \widetilde{G}_\ell$ was arbitrary and $G_\ell$ and $\widetilde{G}_\ell$ differ only by a constant, we obtain
\begin{equation*} \label{eq:dirichlet}
 \dual{\gamma_1^{\mathrm{ext}} (u^{\mathrm{ext}} - \widetilde{u}^{\mathrm{ext}}_\ell)}{\widetilde{G}_\ell}_{\Gamma} 
 \leq \EE(u_\ell,\phi_\ell) \min_{\substack{\widetilde{w} \in H^1_{\mathrm{ext}}(\Omegaext) \\ \widetilde{w}|_{\Gamma} = \widetilde{G}_\ell}} \norm{\nabla \widetilde{w}}_{\Omegaext}
 = \EE(u_\ell,\phi_\ell) \min_{\substack{w \in H^1_{\mathrm{ext}}(\Omegaext) \\ w|_{\Gamma} = G_\ell}} \norm{\nabla w}_{\Omegaext}.
\end{equation*}
Since $J_{\ell,p'}$ is $H^1(\Gamma)$-stable, a result from \cite{Aurada2015} yields
\begin{equation*}
 \begin{split}
  \min_{\substack{w \in H^1_{\mathrm{ext}}(\Omegaext) \\ w|_{\Gamma} = G_\ell}} \norm{\nabla w}_{L^2(\Omegaext)} 
  &\leq \min_{\substack{w \in H^1_{\mathrm{ext}}(\Omegaext) \\ w|_{\Gamma} = J_{\ell,p'} G_\ell}} \norm{\nabla w}_{L^2(\Omegaext)} + {\const{C}{D}} \norm{h_\ell^{1/2} \nabla_\Gamma (1 - J_{\ell,p'})G_\ell}_{L^2(\Gamma)} \\
  &\eqreff*{eq:Dirprop}{\leq} \norm{\nabla w_\ell}_{L^2(\Omegaext)} + {\const{C}{D}} \norm{h_\ell^{1/2} \nabla_\Gamma (1 - J_{\ell,p'})G_\ell}_{L^2(\Gamma)};
 \end{split}
\end{equation*}
{see also~\cite[Theorem~5]{Kurz2019}.}
Thus,
\begin{equation} \label{eq:direrrderiv}
 \dual{\gamma_1^{\mathrm{ext}} (u^{\mathrm{ext}} - \widetilde{u}^{\mathrm{ext}}_\ell)}{\widetilde{G}_\ell}_{\Gamma} 
 \leq \EE(u_\ell,\phi_\ell) {\bigl(\norm{\nabla w_\ell}_{\Omegaext} + \const{C}{D} \norm{h_\ell^{1/2} \nabla_\Gamma (1 - J_{\ell,p'})G_\ell}_{L^2(\Gamma)} \bigr)}.
\end{equation}
Combining~\eqref{eq:erridappl},~\eqref{eq:interrderiv}, and~\eqref{eq:direrrderiv}, we conclude the proof.
\end{proof}
\begin{remark}
The fact that the choice of $J_{\ell,p'}$ is limited only to its $H^1(\Gamma)$-stability allows for a considerable freedom in choosing $J_{\ell,p'}$. 
For instance, {one} can employ the $L^2(\Gamma)$-orthogonal projection defined via
\begin{equation*}
 \dual{J_{\ell,p'} v}{v_\ell}_\Gamma 
 = \dual{v}{v_\ell}_\Gamma 
 \quad \text{for all } v \in L^2(\Gamma) \text{ and } v_\ell \in \SS^{p'}(\TT_\ell^\Gamma).
\end{equation*}
Under reasonable assumptions (see, e.g., \cite{Crouzeix1987} for $d=2$ and \cite{Gaspoz2016} for $d=3$), $J_{\ell,p'}$ is $H^1(\Gamma)$-stable.
Alternatively, one can choose the Scott--Zhang operator $J_{\ell,p'}$; see \cite{Scott1990}; which is known to be unconditionally $H^1(\Gamma)$-stable. 
\end{remark}
The subsequent corollaries present two modifications to Theorem~\ref{thm:mainresult}.
\begin{corollary} \label{cor:corint}
If $\AAA$ is {linear}, the term $\const{C}{mon}^{-1/2}\norm{\AAA \nabla u_\ell - \bm{\sigma}_\ell}$ in~\eqref{eq:funcupperbound} can be replaced by 
\begin{equation} \label{eq:linconst}
 \norm{\AAA \nabla u_\ell - \bm{\sigma}_\ell}_{\AAA^{-1}} 
 \coloneqq \dual{\AAA \nabla u_\ell - \bm{\sigma}_{\ell}}{\nabla u_\ell - \AAA^{-1} \bm{\sigma}_\ell}_{\Omega}^{1/2}.
\end{equation}
{Under the assumptions of Theorem~\ref{thm:mainresult}, this yields the error estimate}
\begin{equation} \label{eq:funcupperboundalt1}
 \begin{split}
  \EE(u_\ell,\phi_\ell) 
  &\leq \norm{\AAA \nabla u_\ell - \bm{\sigma}_\ell}_{\AAA^{-1}} + \norm{\nabla w_\ell}_{\Omegaext} + {\const{C}{D}} \norm{h_\ell^{1/2} \nabla_\Gamma (1 - J_{\ell,p'})G_\ell}_{\Gamma} \\
  &\qquad + \frac{1}{\pi \const{C}{mon}^{1/2}}\norm{h_\ell(1 - Q_{\ell,q'}^\Omega)f}_{\Omega}+ \frac{{\const{C}{N}}}{\const{C}{mon}^{1/2}} \norm{h_\ell^{1/2}(1 - Q_{\ell,q'}^\Gamma)\Phi_\ell}_{\Gamma} .
 \end{split}
\end{equation}
\end{corollary}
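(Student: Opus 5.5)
The plan is to rerun the proof of Theorem~\ref{thm:mainresult} essentially verbatim and modify only Step~4, where the term $\dual{\AAA \nabla u_\ell - \bm{\sigma}_\ell}{\nabla(u - u_\ell)}_\Omega$ is estimated. For linear $\AAA$ we regard $\AAA$ as a (not necessarily symmetric) matrix field satisfying~\eqref{eq:lipell}. The first thing to settle is that $\norm{\cdot}_{\AAA^{-1}}$ in~\eqref{eq:linconst} and $\norm{\cdot}_\AAA$ are seminorms induced by inner products. In the symmetric case, $\AAA$ is symmetric positive definite with $\xi\cdot\AAA\xi \ge \const{C}{mon}\abs{\xi}^2$, so $\AAA^{-1}$ is symmetric positive definite as well. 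Moreover, $\norm{\nabla v}_\AAA^2 = \dual{\AAA\nabla v}{\nabla v}_\Omega$ and $\norm{\bm{\tau}}_{\AAA^{-1}}^2 = \dual{\bm{\tau}}{\AAA^{-1}\bm{\tau}}_\Omega$, which is exactly~\eqref{eq:linconst} with $\bm{\tau} = \AAA\nabla u_\ell - \bm{\sigma}_\ell$, since $\nabla u_\ell - \AAA^{-1}\bm{\sigma}_\ell = \AAA^{-1}\bm{\tau}$.

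The key new step replaces~\eqref{eq:intest} by a weighted Cauchy--Schwarz inequality. We write
\begin{equation*}
 \dual{\bm{\tau}}{\nabla(u-u_\ell)}_\Omega = \dual{\AAA^{-1/2}\bm{\tau}}{\AAA^{1/2}\nabla(u-u_\ell)}_\Omega \le \norm{\bm{\tau}}_{\AAA^{-1}}\,\norm{\nabla(u-u_\ell)}_\AAA,
\end{equation*}
using the symmetric square root $\AAA^{1/2}$ pointwise. Equivalently, one can apply Cauchy--Schwarz in the inner product $(\bm{a},\bm{b})\mapsto\dual{\AAA^{-1}\bm{a}}{\bm{b}}_\Omega$ with $\bm{b}=\AAA\nabla(u-u_\ell)$. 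This bounds the term by $\norm{\bm{\tau}}_{\AAA^{-1}}\,\EE(u_\ell,\phi_\ell)$ without ever invoking $\const{C}{mon}^{-1/2}$.

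Steps~1--3, 5, and 6 remain unchanged; they still use $\norm{\nabla(u-u_\ell)}_\Omega \le \const{C}{mon}^{-1/2}\norm{\nabla(u-u_\ell)}_\AAA$ for the oscillation terms, which is why those constants persist in~\eqref{eq:funcupperboundalt1}. Collecting~\eqref{eq:erridappl}, the modified~\eqref{eq:interrderiv}, and~\eqref{eq:direrrderiv} gives $\EE^2 \le \EE\cdot(\text{RHS of }\eqref{eq:funcupperboundalt1})$. Dividing by $\EE$, or noting the bound is trivial when $\EE=0$, concludes the proof.

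The main obstacle is the meaning of $\norm{\cdot}_{\AAA^{-1}}$ if $\AAA$ is not symmetric. I would first try to interpret $\norm{\cdot}_\AAA$ and $\norm{\cdot}_{\AAA^{-1}}$ as generated by the symmetric part, but $(\AAA^{-1})_{\mathrm{sym}} \ne (\AAA_{\mathrm{sym}})^{-1}$ in general, and then the clean duality fails. I would therefore state the argument under the (implicitly intended) assumption that the linear $\AAA$ is symmetric, which is also what makes $\norm{\nabla u-\nabla u_\ell}_\AAA^2$ a genuine energy norm. Pointwise measurability and boundedness of $\AAA^{\pm1/2}$ follow from~\eqref{eq:lipell}.
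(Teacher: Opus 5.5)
Your proposal is correct and is essentially the paper's proof: only Step~4 changes, with \eqref{eq:intest} replaced by the Cauchy--Schwarz inequality in the $\AAA$-weighted inner product, while the oscillation terms keep the factor $\const{C}{mon}^{-1/2}$. Your explicit symmetry assumption is exactly what the paper uses tacitly when it calls $\dual{\AAA\nabla\cdot}{\nabla\cdot}_\Omega$ an inner product, and your concern is justified: for nonsymmetric $\AAA$ one has $(\AAA^{-1})_{\mathrm{sym}} \preceq (\AAA_{\mathrm{sym}})^{-1}$, so the weighted Cauchy--Schwarz step fails in general.
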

\begin{proof}
Since $\AAA$ is linear, we are in the position to apply the Cauchy--Schwarz inequality with respect to the inner product $\dual{\AAA \nabla \cdot}{\nabla \cdot}_{\Omega}$. 
Hence,~\eqref{eq:intest} can be replaced by
\begin{equation*}
 \dual{\AAA \nabla u_\ell - \bm{\sigma}_\ell}{\nabla(u - u_\ell)}_{\Omega} 
 \leq \norm{\AAA \nabla u_\ell - \bm{\sigma}_\ell}_{\AAA^{-1}} \, \norm{\nabla u - \nabla u_\ell}_{\AAA}
 \leq \EE(u_\ell,\phi_\ell) \norm{\AAA \nabla u_\ell - \bm{\sigma}_\ell}_{\AAA^{-1}}.
\end{equation*}
{Arguing as before, this} concludes the proof.
\end{proof}
By use of the Poincaré--Steklov operators~\eqref{eq:PS}, one can {replace} $w_\ell {\in H^1_\rho(\Omegaext)}$ in Theorem~\ref{thm:mainresult} {by $w_\ell \in H^1(\Omega)$}.
\begin{corollary} \label{cor:corext}
{Under the assumptions of Theorem~\ref{thm:mainresult} but with $w_\ell \in H^1(\Omega)$ satisfying $\gamma_0^{\mathrm{int}} w_\ell = J_{\ell,p'} G_\ell$, there holds the error estimate}
\begin{equation} \label{eq:funcupperboundalt2}
 \begin{split}
  \EE(u_\ell,\phi_\ell) 
  &\leq \frac{1}{\const{C}{mon}^{1/2}}\norm{\AAA \nabla u_\ell - \bm{\sigma}_\ell}_\Omega + \frac{\const{C}{PS}^{\mathrm{ext}}}{\const{c}{PS}^{\mathrm{int}}} \norm{\nabla w_\ell}_{\Omega} + {\const{C}{D}} \norm{h_\ell^{1/2} \nabla_\Gamma (1 - J_{\ell,p'})G_\ell}_{\Gamma} \\
  &\qquad + \frac{1}{\pi \const{C}{mon}^{1/2}}\norm{h_\ell(1 - Q_{\ell,q'}^\Omega)f}_{\Omega}+ \frac{{\const{C}{N}}}{\const{C}{mon}^{1/2}} \norm{h_\ell^{1/2}(1 - Q_{\ell,q'}^\Gamma)\Phi_\ell}_{\Gamma},
 \end{split}
\end{equation}
where $\const{C}{PS}^{\mathrm{ext}}$ and $\const{c}{PS}^{\mathrm{int}}$ are the continuity and ellipticity constants of the exterior and interior Poincaré--Steklov operators~\eqref{eq:PS}, respectively. 
\end{corollary}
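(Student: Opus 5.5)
The plan is to follow the proof of Theorem~\ref{thm:mainresult} verbatim up to Step~6. Steps~1--5 do not involve $w_\ell$ at all, so the interior bound~\eqref{eq:interrderiv} carries over unchanged. The only new work is the Dirichlet residual term. From Step~6 we retain
\begin{equation*}
 \dual{\gamma_1^{\mathrm{ext}} (u^{\mathrm{ext}} - \widetilde{u}^{\mathrm{ext}}_\ell)}{\widetilde{G}_\ell}_{\Gamma}
 \leq \EE(u_\ell,\phi_\ell) \Bigl( \min_{\substack{w \in H^1_\rho(\Omegaext) \\ w|_{\Gamma} = J_{\ell,p'}G_\ell}} \norm{\nabla w}_{\Omegaext} + \const{C}{D} \norm{h_\ell^{1/2} \nabla_\Gamma (1 - J_{\ell,p'})G_\ell}_{\Gamma}\Bigr),
\end{equation*}
which uses the result from~\cite{Aurada2015} exactly as before. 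It therefore suffices to bound the exterior energy-minimal extension of $g \coloneqq J_{\ell,p'}G_\ell$ by $\norm{\nabla w_\ell}_\Omega$, where $w_\ell \in H^1(\Omega)$ is any function with $\gammaint_0 w_\ell = g$.

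For the exterior side, I would use that the minimizer is the harmonic extension $w^{\mathrm{ext}} \in H^1_\rho(\Delta,\Omegaext)$ with trace $g$. By~\eqref{eq:harmonicnorm} and the Dirichlet-to-Neumann property of $S^{\mathrm{ext}}$ (with the sign convention $\gammaext_1 = -\nabla\cdot\bm{n}_\Gamma$), we get $\norm{\nabla w^{\mathrm{ext}}}_{\Omegaext}^2 = -\dual{S^{\mathrm{ext}} g}{g}_\Gamma \le \const{C}{PS}^{\mathrm{ext}} \norm{g}_{H^{1/2}(\Gamma)}^2$ by~\eqref{eq:PSprop}.

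For the interior side, let $w^{\mathrm{int}}$ be the harmonic extension of $g$ into $\Omega$. Dirichlet's principle gives $\norm{\nabla w^{\mathrm{int}}}_\Omega \le \norm{\nabla w_\ell}_\Omega$. Green's formula~\eqref{eq:green} applied to $\bm{\sigma} = \nabla w^{\mathrm{int}}$ together with~\eqref{eq:PSprop} yields $\norm{\nabla w^{\mathrm{int}}}_\Omega^2 = \dual{S^{\mathrm{int}} g}{g}_\Gamma \ge \const{c}{PS}^{\mathrm{int}}\norm{g}_{H^{1/2}(\Gamma)}^2$. Combining the two sides gives $\norm{\nabla w^{\mathrm{ext}}}_{\Omegaext}^2 \le (\const{C}{PS}^{\mathrm{ext}}/\const{c}{PS}^{\mathrm{int}})\norm{\nabla w_\ell}_\Omega^2$. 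Inserting this into the estimate above and combining with~\eqref{eq:erridappl} and~\eqref{eq:interrderiv} then gives~\eqref{eq:funcupperboundalt2} after dividing by $\EE(u_\ell,\phi_\ell)$.

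The main obstacle is the constant. The argument naturally produces $(\const{C}{PS}^{\mathrm{ext}}/\const{c}{PS}^{\mathrm{int}})^{1/2}$, whereas the statement has the ratio without the square root. To reconcile this, I would check whether the constants in~\eqref{eq:PSprop} are normalized so that the ratio is at least one; in that case the stated bound is weaker and still follows. Alternatively, one can take the statement's form as a safe over-estimate. A second technical point concerns constants: $\norm{\cdot}_{H^{1/2}(\Gamma)}$ does not vanish on constants, while the gradient norms do. Since $S^{\mathrm{int}}1 = 0$, ellipticity of $S^{\mathrm{int}}$ in the full norm would fail, so I would read~\eqref{eq:PSprop} modulo constants, or on a suitable quotient space. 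The quantity to minimize is insensitive to constant shifts of $g$: as in Step~6, only $\widetilde{G}_\ell$ up to a constant matters. For $d=2$, one also has to match the constant mode in the exterior, which I would handle by the same shift, using that $H^1_\rho(\Omegaext)$ contains constants.
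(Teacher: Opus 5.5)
Your route is the paper's: Steps~1--5 of Theorem~\ref{thm:mainresult} unchanged, the Step~6 bound with traces $J_{\ell,p'}G_\ell$, and a comparison of the minimal exterior and interior Dirichlet energies via $S^{\mathrm{ext}}$ and $S^{\mathrm{int}}$. Both caveats you raise are real defects of the paper's proof as well.

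\textbf{The square root.} This one is harmless. A continuity constant can always be enlarged, so one may assume $\const{C}{PS}^{\mathrm{ext}} \geq \const{c}{PS}^{\mathrm{int}}$, and then $(\const{C}{PS}^{\mathrm{ext}}/\const{c}{PS}^{\mathrm{int}})^{1/2} \leq \const{C}{PS}^{\mathrm{ext}}/\const{c}{PS}^{\mathrm{int}}$. Without such a normalization, your ``safe over-estimate'' reading is not automatic.

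\textbf{The constant mode.} Here $S^{\mathrm{int}}1 = 0$ falsifies the lower bound in~\eqref{eq:PSprop}. Your repair shifts $g$ into a complement of the constants, where $S^{\mathrm{int}}$ is elliptic. This requires both minimal energies to be invariant under $g \mapsto g + c$. For $d=2$ that holds, since $H^1_\rho(\Omegaext)$ contains the constants. Your argument then works with $\const{c}{PS}^{\mathrm{int}}$ read as the ellipticity constant on that complement.

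One correction for $d=2$: your identity $\norm{\nabla w^{\mathrm{ext}}}_{\Omegaext}^2 = -\dual{S^{\mathrm{ext}}g}{g}_\Gamma$ is false. For $g \equiv 1$ the left side vanishes, while the right side is positive by~\eqref{eq:PSprop}. The reason is that $S^{\mathrm{ext}}g$ is the Neumann datum of $\widetilde K g - \widetilde V S^{\mathrm{ext}}g$, which grows logarithmically unless $\dual{S^{\mathrm{ext}}g}{1}_\Gamma = 0$. Only the inequality $\leq$ survives, which is the direction you need.

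\textbf{The genuine gap: $d=3$.} There $\rho = (1+\abs{x})^{-1} \notin L^2(\Omegaext)$, so constants do not belong to $H^1_\rho(\Omegaext)$. Hence the exterior minimal energy is not shift-invariant. For $g \equiv 1$ it equals $\dual{V^{-1}1}{1}_\Gamma > 0$, attained by the capacitary potential $\widetilde V V^{-1}1$, whereas the interior minimum is $0$. So no inequality of the type you and the paper use can hold. The passage from $\widetilde G_\ell$ to $G_\ell$ in Step~6, which you invoke, fails for the same reason.

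In fact the corollary is false for $d=3$. Take $f = 0$ and $\const{g}{D} = \const{g}{N} = 0$, so $u = \uext = 0$. Take $\AAA = a\,\mathrm{id}$, $u_\ell = 0$, and $\phi_\ell = tV^{-1}1$; this lies in $L^2(\Gamma)$ since $V\colon L^2(\Gamma) \to H^1(\Gamma)$ is an isomorphism. Then:
\begin{itemize}
\item $G_\ell \equiv -t$, so $(1-J_{\ell,p'})G_\ell = 0$, and $w_\ell \equiv -t$ is admissible with $\nabla w_\ell = 0$.
\item The constraints~\eqref{eq:divprop}--\eqref{eq:Neuprop} do not involve $a$, so one fixed $\bm{\sigma}_\ell$ serves for all $a$.
\item Consequently the right-hand side of~\eqref{eq:funcupperboundalt2} is $a^{-1/2}$ times an $a$-independent number, however $\const{c}{PS}^{\mathrm{int}}$ is interpreted.
\item Yet $\uext - \uext_\ell = t\,\widetilde V V^{-1}1$ gives $\EE(u_\ell,\phi_\ell)^2 = t^2\dual{V^{-1}1}{1}_\Gamma$ for every $a > 0$.
\end{itemize}
For large $a$ the estimate fails. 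Hence your proof, like the paper's, establishes the corollary only for $d=2$.
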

\begin{proof}
Let $g \in H^{1/2}(\Gamma)$. 
The solutions $u_g^{\mathrm{int}} \in H^1(\Omega)$ and $u_g^{\mathrm{ext}} \in H^1_\rho(\Omegaext)$ to the interior and exterior Laplace problems with Dirichlet data $g$ satisfy
\begin{equation*}
 \norm{\nabla u_g^{\mathrm{int}}}_\Omega 
 = \min_{\substack{w \in H^1(\Omega) \\ \gamma_0^{\mathrm{int}} w = g}} \norm{\nabla w}_\Omega 
 \quad \text{and} \quad 
 \norm{\nabla u_g^{\mathrm{ext}}}_{\Omegaext} 
 = \min_{\substack{w \in H^1_\rho(\Omegaext) \\ \gamma_0^{\mathrm{ext}} w = g}} \norm{\nabla w}_{\Omegaext}.
\end{equation*}
Since $\Delta u_g^{\mathrm{ext}} = 0$ in $\Omegaext$, {the identity}~\eqref{eq:harmonicnorm} and {the properties of the exterior Poincaré--Steklov operator}~\eqref{eq:PSprop} yield
\begin{equation*}
 \min_{\substack{w \in H^1_\rho(\Omegaext) \\ \gamma_0^{\mathrm{ext}} w = g}} \norm{\nabla w}_{\Omegaext}^2
 = \norm{\nabla u_g^{\mathrm{ext}}}_{\Omegaext}^2
 \eqreff*{eq:harmonicnorm}{=} - \dual{g}{\gammaext_1 u_g^{\mathrm{ext}}}_\Gamma
 = - \dual{g}{S^{\mathrm{ext}} g}_\Gamma 
 \eqreff*{eq:PSprop}{\leq} \const{C}{PS}^{\mathrm{ext}} \norm{g}_{H^{1/2}(\Gamma)}^2.
\end{equation*}
{Analogously,} $\Delta u_g^{\mathrm{int}} = 0$ in $\Omega$, {the identity}~\eqref{eq:green}, and {the properties of the interior Poincaré--Steklov operator}~\eqref{eq:PSprop} yield
\begin{equation*}
 \min_{\substack{w \in H^1(\Omega) \\ \gamma_0^{\mathrm{int}} w = g}} \norm{\nabla w}^2_\Omega 
 = \norm{\nabla u_g^{\mathrm{int}}}_\Omega^2
 \eqreff*{eq:green}{=} \dual{g}{\gammaint_1 u_g^{\mathrm{int}}}_\Gamma
 = \dual{g}{S^{\mathrm{int}} g}_\Gamma 
 \eqreff*{eq:PSprop}{\geq} \const{c}{PS}^{\mathrm{int}} \norm{g}_{H^{1/2}(\Gamma)}^2.
\end{equation*}
Combining the last two estimates, we obtain
\begin{equation*}
 \min_{\substack{w \in H^1_{{\rho}}(\Omegaext) \\ \gamma_0^{\mathrm{ext}} w = g}} \norm{\nabla w}_{\Omegaext}^2 
 \leq \frac{\const{C}{PS}^{\mathrm{ext}}}{\const{c}{PS}^{\mathrm{int}}} \min_{\substack{w \in H^1(\Omega) \\ \gamma_0^{\mathrm{int}} w = g}} \norm{\nabla w}_\Omega^2.
\end{equation*}
Hence, we may replace {the argument in}~\eqref{eq:direrrderiv} by{
\begin{equation*}
 \begin{split}
  \dual{\gamma_1^{\mathrm{ext}} (u^{\mathrm{ext}} - \widetilde{u}^{\mathrm{ext}}_\ell)}{\widetilde{G}_\ell}_{\Gamma} 
  &\leq \EE(u_\ell,\phi_\ell) \bigl(\min_{\substack{w \in H^1_{{\rho}}(\Omegaext) \\ w|_{\Gamma} = J_{\ell,p'} G_\ell}} \norm{\nabla w}_{\Omegaext} + \const{C}{D} \norm{h_\ell^{1/2} \nabla_\Gamma (1 - J_{\ell,p'})G_\ell}_{\Gamma} \bigr) \\
  &\leq \EE(u_\ell,\phi_\ell) \Bigl(\frac{\const{C}{PS}^{\mathrm{ext}}}{\const{c}{PS}^{\mathrm{int}}} \norm{\nabla w_\ell}_\Omega + \const{C}{D} \norm{h_\ell^{1/2} \nabla_\Gamma (1 - J_{\ell,p'})G_\ell}_{\Gamma} \Bigr).
 \end{split}
\end{equation*}}
This concludes the proof.
\end{proof}

\subsection{Computation of auxiliary flux $\texorpdfstring{\boldsymbol{\sigma_\ell \in H(\div,\Omega)}}{}$ satisfying~\bfseries(\ref{eq:prop})} \label{subsec:flux}

In order to determine a suitable flux $\bm{\sigma}_\ell \approx \AAA \nabla u_\ell$, we adopt the idea of \textit{equilibrated fluxes}; see, e.g., \cite{Ainsworth1997,Braess2008,Ern2015}. 
For every $z \in \NN_\ell$, we set $\omega_\ell^z \coloneqq \mathrm{int}(\Omega_\ell[z])$ and consider the hat functions $(\zeta_\ell^z)_{z \in \NN_\ell} \subseteq \SS^1(\TT_\ell)$. 
For a polynomial degree $q' \geq 0$, we employ the discrete dual mixed formulation of the pure Neumann problem in $\omega_\ell^z \colon$ 
Find $(\bm{\sigma}_\ell^z,p_\ell^z) \in \BB\DD\MM^{q'+1}(\TT_\ell[z]) \times \PP^{q'}(\TT_\ell[z])$ such that, for all $(\bm{\tau}_\ell,q_\ell) \in \BB\DD\MM_0^{q'+1}(\TT_\ell[z]) \times \PP^{q'}(\TT_\ell[z])$,
\begin{subequations} \label{eq:dualmixed}
\begin{equation} \label{eq:dualmixedloc}
 \begin{split}
  \dual{\bm{\sigma}_\ell^z}{\bm{\tau}_\ell}_{\omega_\ell^z} + \dual{p_\ell^z}{\div \bm{\tau}_\ell}_{\omega_\ell^z} 
  &= \dual{\zeta_\ell^z \AAA \nabla u_\ell}{\bm{\tau}_\ell}_{\omega_\ell^z}  \\
  \dual{\div \bm{\sigma}_\ell^z}{q_\ell}_{\omega_\ell^z} + \dual{p_\ell^z}{1}_{\omega_\ell^z} \dual{q_\ell}{1}_{\omega_\ell^z} 
  &= \dual{\nabla \zeta_\ell^z \cdot \AAA \nabla u_\ell - \zeta_\ell^z f}{q_\ell}_{\omega_\ell^z}
 \end{split}
\end{equation}
subject to the boundary conditions, for all $F \in \FF_\ell$ with $F \subseteq \partial \omega_\ell^z$,
\begin{equation} \label{eq:dualmixedbndry}
 \bm{\sigma}_\ell^z|_F \cdot \bm{n}_F 
 = 
\begin{cases} Q_{\ell,q'}^\Gamma \bigl(\zeta_\ell^z \Phi_\ell\bigr)|_F & \text{if } z \in \Gamma \text{ and } F \in \TT_\ell^\Gamma, \\
  0 & \text{if } z \in \Omega \text{ or } F \notin \TT_\ell^\Gamma.
  \end{cases} 
\end{equation}
We refer to \cite{Arnold1985} for existence and uniqueness of the solution $(\bm{\sigma}_\ell^z,p_\ell^z)$ to~\eqref{eq:dualmixedloc} subject to~\eqref{eq:dualmixedbndry}. 
For every $z \in \NN_\ell$, we extend $\bm{\sigma}_\ell^z$ and $p_\ell^z$ by zero to functions defined in $\Omega$ and set
\begin{equation} \label{eq:fluxglobdef}
 \bm{\sigma}_\ell 
 \coloneqq \sum_{z \in \NN_\ell} \bm{\sigma}_\ell^z, 
 \quad p_\ell 
 \coloneqq \sum_{z \in \NN_\ell} p_\ell^z.
\end{equation}
\end{subequations}
The following proposition {shows that} $\bm{\sigma}_\ell$ {is a valid candidate in Theorem~\ref{thm:mainresult}, Corollary~\ref{cor:corint}, and Corollary~\ref{cor:corext}}, i.e., it satisfies the required constraints~\eqref{eq:prop}.
\begin{proposition} \label{prop:flux}
The flux $\bm{\sigma}_\ell$ from~\eqref{eq:dualmixed} satisfies $\bm{\sigma}_\ell \in \BB\DD\MM^{q'+1}(\TT_\ell) \subseteq H(\div,\Omega)$ with
 \begin{equation} \label{eq:flux}
  \begin{split}
   \div \bm{\sigma}_\ell 
   = -Q_{\ell,q'}^\Omega f - \dual{p_\ell}{1}_{\Omega}
   \quad \text{and} \quad
   \bm{\sigma}_\ell \cdot \bm{n}_\Gamma 
   = Q_{\ell,q'}^\Gamma\Phi_\ell.
  \end{split}
 \end{equation}
\end{proposition}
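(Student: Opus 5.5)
We prove the three claims in order: conformity, the normal trace, and the divergence.

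\textbf{Conformity.} Each $\bm{\sigma}_\ell^z \in \BB\DD\MM^{q'+1}(\TT_\ell[z])$ has vanishing normal trace on every facet $F \subseteq \partial\omega_\ell^z$ with $F \not\subseteq \Gamma$, by~\eqref{eq:dualmixedbndry}. Hence its extension by zero is $H(\div,\Omega)$-conforming and lies in $\BB\DD\MM^{q'+1}(\TT_\ell)$, and so does the finite sum $\bm{\sigma}_\ell$.

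\textbf{Normal trace.} For a boundary facet $F \in \TT_\ell^\Gamma$, only the patches of vertices $z \in \NN_\ell \cap F$ contribute. Summing~\eqref{eq:dualmixedbndry}, using linearity of $Q_{\ell,q'}^\Gamma$ and the partition of unity $\sum_{z \in F} \zeta_\ell^z|_F = 1$ from~\eqref{eq:hat}, gives $\bm{\sigma}_\ell \cdot \bm{n}_\Gamma = Q_{\ell,q'}^\Gamma \Phi_\ell$ on $F$.

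\textbf{Divergence.} This is the main step. Fix $z$ and write $c^z \coloneqq \dual{p_\ell^z}{1}_{\omega_\ell^z}$. Since $\div \bm{\sigma}_\ell^z \in \PP^{q'}(\TT_\ell[z])$, testing the second equation of~\eqref{eq:dualmixedloc} with arbitrary $q_\ell \in \PP^{q'}(\TT_\ell[z])$ identifies
\begin{equation*}
 \div \bm{\sigma}_\ell^z = Q_{\ell,q'}^\Omega\bigl(\nabla\zeta_\ell^z \cdot \AAA\nabla u_\ell - \zeta_\ell^z f\bigr) - c^z \quad \text{on } \omega_\ell^z.
\end{equation*}
Extending by zero and summing over $z \in \NN_\ell$ element-wise, on each $T \in \TT_\ell$ we get $\sum_{z \in \NN_\ell \cap T} \zeta_\ell^z = 1$ and $\sum_{z \in \NN_\ell \cap T} \nabla\zeta_\ell^z = 0$. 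Therefore the $\AAA\nabla u_\ell$ contributions cancel, the $f$ contributions give $-Q_{\ell,q'}^\Omega f$, and
\begin{equation*}
 \div\bm{\sigma}_\ell|_T = -Q_{\ell,q'}^\Omega f|_T - \sum_{z \in \NN_\ell \cap T} c^z.
\end{equation*}

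\textbf{Main obstacle.} The last sum must be a global constant equal to $\dual{p_\ell}{1}_\Omega = \sum_z c^z$. This holds only if $c^z = 0$ for every $z$ (for which each $\sum_{z \in T} c^z$ is trivially constant). To show this, we test the second equation of~\eqref{eq:dualmixedloc} with $q_\ell = 1$. By the divergence theorem and~\eqref{eq:dualmixedbndry}, the left-hand side becomes
\begin{equation*}
 \dual{\bm{\sigma}_\ell^z \cdot \bm{n}}{1}_{\partial\omega_\ell^z} + c^z\,\abs{\omega_\ell^z},
\end{equation*}
where the boundary term equals $\dual{\zeta_\ell^z\Phi_\ell}{1}_{\Gamma \cap \partial\omega_\ell^z}$ for $z \in \Gamma$ and $0$ otherwise. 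The right-hand side is $\dual{\nabla\zeta_\ell^z \cdot \AAA\nabla u_\ell - \zeta_\ell^z f}{1}_{\omega_\ell^z}$.

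For interior $z$ this reduces to the usual Galerkin orthogonality with test function $\zeta_\ell^z$. For boundary $z$ the equilibration needs the Galerkin equation~\eqref{eq:discprob} tested with $(v_\ell,\psi_\ell) = (\zeta_\ell^z, 0)$; this test pair is admissible when $p \geq 1$. Expanding $b$ and $F$ from~\eqref{eq:lhs}--\eqref{eq:rhs}, this equation reads exactly
\begin{equation*}
 \dual{\AAA\nabla u_\ell}{\nabla\zeta_\ell^z}_\Omega - \dual{f}{\zeta_\ell^z}_\Omega - \dual{\Phi_\ell}{\zeta_\ell^z}_\Gamma = 0,
\end{equation*}
which yields $c^z = 0$. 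I would state this Galerkin assumption explicitly at that point.

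Should the authors intend arbitrary $(u_\ell,\phi_\ell)$ instead, the fallback is weaker. One only gets $\div\bm{\sigma}_\ell = -Q_{\ell,q'}^\Omega f - \sum_{z \in T} c^z$ element-wise, and the claimed constant form must then be justified by the global compatibility obtained from summing over all $z$. That compatibility does not force the element-wise sums to be constant, so I expect the proof to rely on Galerkin orthogonality or to show that all $c^z$ vanish.
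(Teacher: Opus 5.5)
Your conformity and normal-trace arguments are the same as the paper's. The divergence step is where you genuinely depart from the paper, and your version is the sound one.

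The paper never tests with $q_\ell=1$. It writes $\dual{\div\bm{\sigma}_\ell}{q_\ell}_\Omega=\sum_{T}\sum_{z\in\NN_\ell}\dual{\nabla\zeta_\ell^z\cdot\AAA\nabla u_\ell-\zeta_\ell^z f-\dual{p_\ell^z}{1}_{\omega_\ell^z}}{q_\ell}_T$ and then invokes \eqref{eq:hat}. This counts every constant $c^z=\dual{p_\ell^z}{1}_{\omega_\ell^z}$ on every element. It thereby produces the global constant $\sum_z c^z=\dual{p_\ell}{1}_\Omega$ directly, seemingly for arbitrary $(u_\ell,\phi_\ell)$.

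That identity holds only on elements $T\subseteq\overline{\omega_\ell^z}$. Elsewhere the zero extension has $\div\bm{\sigma}_\ell^z=0$, not $-c^z$. Your elementwise formula $\div\bm{\sigma}_\ell|_T=-Q_{\ell,q'}^\Omega f|_T-\sum_{z\in\NN_\ell\cap T}c^z$ is the correct one.

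Your test with $q_\ell=1$ uses that $Q_{\ell,q'}^\Gamma$ preserves facet integrals. It gives $c^z\abs{\omega_\ell^z}=\dual{\AAA\nabla u_\ell}{\nabla\zeta_\ell^z}_\Omega-\dual{f}{\zeta_\ell^z}_\Omega-\dual{\Phi_\ell}{\zeta_\ell^z}_\Gamma$ uniformly for interior and boundary vertices. This is exactly the residual of \eqref{eq:discprob} tested with $(\zeta_\ell^z,0)$.

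So your route proves the proposition rigorously for the Galerkin solution, where in fact $\dual{p_\ell}{1}_\Omega=0$ and $\div\bm{\sigma}_\ell=-Q_{\ell,q'}^\Omega f$. The paper's route obtains the generality needed for its ``any conforming approximation'' claim only through the invalid step.

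Your caveat is justified. For non-Galerkin $(u_\ell,\phi_\ell)$, the $c^z$ are generically nonzero. Examples are inexactly solved systems, and the Johnson--Nédélec and Bielak--MacCamy approximations of Section~\ref{sec:numerics}, which do not satisfy the Costabel--Han orthogonality. In that case $\div\bm{\sigma}_\ell+Q_{\ell,q'}^\Omega f$ is only piecewise constant, and \eqref{eq:divprop} can fail. The Galerkin hypothesis therefore has to be added, or the construction modified.

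One small overstatement in your proposal: \eqref{eq:flux} only requires $\sum_{z\in\NN_\ell\setminus T}c^z=0$ for every $T$. This is weaker than $c^z=0$ for all $z$; for example, on two triangles sharing an edge, the constants at the two shared vertices are unconstrained. So your ``only if'' is too strong. Since you use only the sufficient direction, your proof is unaffected.
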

\begin{proof}
Since $\bm{\sigma}_\ell^z|_{\partial \omega_\ell^z \cap \Omega} \cdot \bm{n}_{\partial \omega_\ell^z} = 0$, extension by zero preserves normal continuity of $\bm{\sigma}_\ell^z$ across the patch boundary $\partial \omega_\ell^z \cap \Omega$. 
Thus, we obtain $\bm{\sigma}_\ell^z \in \BB\DD\MM^{q'+1}(\TT_\ell)$ and, consequently, $\bm{\sigma}_\ell \in \BB\DD\MM^{q'+1}(\TT_\ell) \subseteq H(\div,\Omega)$.
Due to $(\zeta_{\ell,z}|_F)_{z \in F}$ being a partition of unity for every $F \in \TT_\ell^\Gamma$ (see~\eqref{eq:hat}), we obtain
\begin{equation*}
 \begin{split}
  \bm{\sigma}_\ell|_F \cdot \bm{n}_F 
  &= \sum_{z \in \NN_\ell \cap F} \bigl(Q_{\ell,q'}^\Gamma(\zeta_{\ell,z}\Phi_\ell)\bigr)|_F 
  \eqreff*{eq:hat}{=} (Q_{\ell,q'}^\Gamma \Phi_\ell)|_F
  \quad \text{for all } F \in \TT_\ell^\Gamma,
 \end{split}
\end{equation*}
which implies $\bm{\sigma}_\ell|_\Gamma \cdot \bm{n}_\Gamma = Q_{\ell,q'}^\Gamma \Phi_\ell$. 
This concludes the second equality in~\eqref{eq:flux}. 
Due to~\eqref{eq:dualmixed} and the properties~\eqref{eq:hat} of the hat functions, we obtain
\begin{equation*} 
 \begin{split}
  \dual{\div \bm{\sigma}_\ell}{q_\ell}_{\Omega} 
  &= \sum_{T \in \TT_\ell} \sum_{z \in \NN_\ell} \dual{\div \bm{\sigma}_\ell^z}{q_\ell}_T 
  = \sum_{T \in \TT_\ell} \sum_{z \in \NN_\ell} \dual{\nabla \zeta_\ell^z \cdot \AAA \nabla u_\ell - \zeta_\ell^z f - \dual{p_\ell^z}{1}_{\omega_\ell^z}}{q_\ell}_T \\
  &\eqreff*{eq:hat}{=} \sum_{T \in \TT_\ell}\Bigl[ - \dual{f}{q_\ell}_T - \big\langle \sum_{z \in \NN_\ell} \dual{p_\ell^z}{1}_\Omega,q_\ell \big\rangle_T \Bigr] 
  = - \dual{f + \dual{p_\ell}{1}_\Omega}{q_\ell}_\Omega
 \end{split}
\end{equation*}
for all $q_\ell \in \PP^{q'}(\TT_\ell)$. 
Lastly, since $\bm{\sigma}_\ell \in \BB\DD\MM^{q'+1}(\TT_\ell)$, it holds that $\div \bm{\sigma}_\ell \in \PP^{q'}(\TT_\ell)$. 
Hence, the last identity implies $\div \bm{\sigma}_\ell = -Q_{\ell,q'}^\Omega f - \dual{p_\ell}{1}_\Omega$.
\end{proof}

\subsection{Computation of auxiliary potential $\texorpdfstring{\boldsymbol{w_\ell \in H^1_\rho(\Omegaext)}}{}$ satisfying~\bfseries(\ref{eq:prop})}  \label{subsec:potential}

In order to obtain a suitable function $w_\ell$, we suppose that there is a bounded domain $\widetilde{\Omega}$ such that $\Omega \cap \widetilde{\Omega} = \emptyset$ and $\Gamma \subseteq \partial \widetilde{\Omega}$. 
Moreover, we suppose that there is a triangulation $\widetilde{\TT}_\ell$ of $\widetilde{\Omega}$ such that $\widetilde{\TT}_\ell|_\Gamma = \TT_\ell^\Gamma$. 
Let $k,p' \geq 1$ be fixed and for $z \in \NN_\ell^\Gamma$ set $\widetilde{\omega}_\ell^{z,k} \coloneqq \mathrm{int}(\widetilde{\Omega}_\ell^{[k]}[z]) \subseteq \widetilde{\Omega}$. 
We define
\begin{equation*}
 \xi_\ell^z 
 \coloneqq \abs{\widetilde{\omega}_\ell^{z,k} \cap \NN_\ell^\Gamma}^{-1} \sum_{z' \in \widetilde{\omega}_\ell^{z,k} \cap \NN_\ell^\Gamma} \zeta_{z'}|_{\Gamma} \in \SS^1(\TT_\ell^\Gamma) 
 \quad \text{for all } z \in \NN_\ell^\Gamma.
\end{equation*} 
Let $J_{\ell,p'} \colon H^1(\Gamma) \to \SS^{p'}(\TT_\ell^\Gamma)$ be a $H^1(\Gamma)$-stable projection and consider the problem of finding $w_\ell^z \in \SS^{p'+1}(\widetilde{\TT}_\ell^{[k]}[z])$ such that 
\begin{subequations} \label{eq:wdef}
\begin{equation} \label{eq:wlocdef}
 \begin{split}
  \dual{\nabla w_\ell^z}{\nabla v_\ell}_{\widetilde{\omega}_\ell^{z,k}}
  &= 0 \quad \text{for all } v_\ell \in \SS^{p'+1}_0(\widetilde{\TT}_\ell^{[k]}[z]) \\
  w_\ell^z|_{\partial \widetilde{\omega}_\ell^{z,k}} &= 
  \begin{cases}
   \xi_\ell^z J_{\ell,p'}(G_\ell) & \text{on } \partial \widetilde{\omega}_\ell^{z,k} \cap \Gamma, \\
   0 & \text{on } \partial \widetilde{\omega}_\ell^{z,k} \setminus \Gamma.
  \end{cases}
 \end{split}
\end{equation}
For every $z \in \NN_\ell^\Gamma$, the problem~\eqref{eq:wlocdef} admits a unique solution $w_\ell^z \in \SS^{p'{+1}}(\widetilde{\TT}_\ell^{[k]}[z])$ for every $z \in \NN_\ell^\Gamma$; see, e.g., \cite{Bartels2004,Aurada2013}. 
{We extend $w_\ell^z$ by zero to a function defined in $\Omegaext$ and} set
\begin{equation} \label{eq:wglobdef}
 w_\ell 
 \coloneqq \sum_{z \in \NN_\ell^\Gamma} w_\ell^z.
\end{equation}
\end{subequations}
{The following proposition shows that $w_\ell$ is a valid candidate in Theorem~\ref{thm:mainresult} and Corollary~\ref{cor:corint}, i.e., it satisfies the required constraints~\eqref{eq:prop}. 
\begin{proposition} \label{prop:potential}
The function $w_\ell$ from~\eqref{eq:wdef} satisfies $w_\ell \in H^1_\rho(\Omegaext)$ with
\begin{equation} \label{eq:potential}
  \gammaext_0 w_\ell 
  = J_{\ell,p'} G_\ell.
\end{equation}
\end{proposition}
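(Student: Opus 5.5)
The proof has two parts: membership $w_\ell \in H^1_\rho(\Omegaext)$ and the trace identity. We argue patch by patch and then sum.

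\textbf{Regularity.} For every $z \in \NN_\ell^\Gamma$, the function $w_\ell^z \in \SS^{p'+1}(\widetilde{\TT}_\ell^{[k]}[z]) \subset H^1(\widetilde{\omega}_\ell^{z,k})$ vanishes on $\partial\widetilde{\omega}_\ell^{z,k}\setminus\Gamma$. Hence its extension by zero lies in $H^1$ of $\Omegaext$ with compact support in $\overline{\widetilde{\Omega}}$. The trace of the extension from the zero side matches the zero boundary datum, so no jump is created across $\partial\widetilde{\omega}_\ell^{z,k}\cap\Omegaext$.

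A compactly supported $H^1$ function automatically satisfies $\rho w \in L^2(\Omegaext)$ and $\nabla w\in L^2(\Omegaext)$, because $\rho$ is bounded. Therefore each extended $w_\ell^z$ belongs to $H^1_\rho(\Omegaext)$. Since the sum \eqref{eq:wglobdef} is finite, $w_\ell \in H^1_\rho(\Omegaext)$ as well.

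\textbf{Trace identity.} By linearity of the trace,
\[
\gammaext_0 w_\ell = \sum_{z\in\NN_\ell^\Gamma} \gammaext_0 w_\ell^z .
\]
On $\Gamma\cap\partial\widetilde{\omega}_\ell^{z,k}$ the trace of $w_\ell^z$ is $\xi_\ell^z J_{\ell,p'}G_\ell$. On $\Gamma\setminus\partial\widetilde{\omega}_\ell^{z,k}$ it is zero. Here one must check that $\xi_\ell^z$ vanishes on $\Gamma$ outside $\Gamma\cap\overline{\widetilde{\omega}_\ell^{z,k}}$. This holds because $\xi_\ell^z$ is built from hat functions of boundary nodes in the $k$-patch. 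The point to verify is that the support of these hat functions stays inside the patch boundary on $\Gamma$; boundary nodes on the relative boundary of the patch would violate this. I would handle it through the precise convention that $\widetilde{\omega}_\ell^{z,k}$ is an open set, so only interior-of-patch nodes are counted.

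Consequently, $\gammaext_0 w_\ell = \bigl(\sum_z \xi_\ell^z\bigr) J_{\ell,p'}G_\ell$, and it remains to show $\sum_{z\in\NN_\ell^\Gamma}\xi_\ell^z = 1$ on $\Gamma$.

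\textbf{Partition of unity.} Write $N_z \coloneqq \abs{\widetilde{\omega}_\ell^{z,k}\cap\NN_\ell^\Gamma}$. Interchanging the sums gives
\[
\sum_z \xi_\ell^z = \sum_{z'} \Bigl(\sum_{z:\, z'\in\widetilde{\omega}_\ell^{z,k}} N_z^{-1}\Bigr)\zeta_{z'}|_\Gamma .
\]
Since $\sum_{z'} \zeta_{z'}|_\Gamma = 1$ by \eqref{eq:hat}, it suffices that the coefficient of each $\zeta_{z'}$ equals one. I expect this to be the main obstacle, since it is not a mere formality. The relation ``$z'$ lies in the $k$-patch of $z$'' is symmetric, but the weights $N_z^{-1}$ depend on $z$, not on $z'$. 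So the coefficient equals one only if the weighting is reinterpreted, e.g.\ with normalization $N_{z'}^{-1}$ indexed by the node being covered.

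My first attempt will therefore be to read the definition as intended to produce a partition of unity. I would verify $\sum_z\xi_\ell^z=1$ by counting: for fixed $z'$, the number of $z$ with $z'\in\widetilde{\omega}_\ell^{z,k}$ equals $N_{z'}$ by symmetry of the patch relation. With weight $N_{z'}^{-1}$ the coefficient is then exactly one.

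If the definition must be taken literally with weight $N_z^{-1}$, I would instead check the claim on each boundary facet separately. I would compare the counts directly for $k=1$, where the patch relation means nodes sharing a facet, and then extend to general $k$. Once the coefficients are shown to equal one, $\gammaext_0 w_\ell=J_{\ell,p'}G_\ell$ follows, which is \eqref{eq:potential}.
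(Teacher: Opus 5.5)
Your regularity argument coincides with the paper's. Your objection to the trace identity is also justified, and it applies to the paper's proof as well. Write $P(z)$ for the boundary nodes used in $\xi_\ell^z$ and $N_z=\abs{P(z)}$. The paper simply writes $\sum_z N_z^{-1}\sum_{z'\in P(z)}\zeta_{z'}|_\Gamma=1$ and cites \eqref{eq:hat}. After interchanging the sums, however, the coefficient of $\zeta_{z'}$ is $\sum_{z:\,z'\in P(z)}N_z^{-1}$, and nothing shows that this equals one.

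\textbf{Why your repair does not close the gap.} The two properties you need from $P(z)$ conflict with each other.
\begin{itemize}
\item \emph{The open-set convention.} Literally, $\widetilde{\omega}_\ell^{z,k}$ is open in $\R^d$ and lies in $\widetilde{\Omega}$, so it contains no node of $\Gamma$ at all. Openness does not select ``relative-interior'' nodes, so $P(z)$ has to be specified by hand.
\item \emph{Closed patches.} Take $P(z)\coloneqq\widetilde{\Omega}_\ell^{[k]}[z]\cap\NN_\ell^\Gamma$. Then the relation $z'\in P(z)$ is symmetric, since chains of touching elements can be reversed, and your weights $N_{z'}^{-1}$ do give $\sum_z\xi_\ell^z=1$. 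But now $P(z)$ contains the nodes on the relative boundary of $\Gamma\cap\widetilde{\Omega}_\ell^{[k]}[z]$, where $\xi_\ell^z>0$. So the data in \eqref{eq:wlocdef} jump where $\partial\widetilde{\omega}_\ell^{z,k}\setminus\Gamma$ meets $\Gamma$, unless $J_{\ell,p'}G_\ell$ vanishes there, and no $w_\ell^z\in\SS^{p'+1}(\widetilde{\TT}_\ell^{[k]}[z])$ exists. The zero extension would also have the wrong trace on the neighbouring facets.
\item \emph{Relative-interior nodes.} Take instead the nodes $z'$ with $\Gamma_\ell^{[1]}[z']\subseteq\widetilde{\Omega}_\ell^{[k]}[z]$, which is what the support condition requires. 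Then the relation is no longer symmetric, and both weightings fail.
\end{itemize}

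\textbf{Counterexample.} Let $d=2$ and $k=1$. Let $z_i$ be a reentrant corner of $\Omega$ at which $\widetilde{\TT}_\ell$ contains $\mathrm{conv}\{z_{i-1},z_i,z_{i+1}\}$, the triangle spanned by the two boundary edges at $z_i$. Then $P(z_i)=\{z_i\}$, and generically $P(z_{i\pm1})=\{z_{i\pm1},z_i\}$. Moreover $z_i$ lies in no other $P(z)$. Hence the coefficient of $\zeta_{z_i}$ is $\tfrac12+1+\tfrac12=2$ with the paper's weights and $3\cdot 1=3$ with yours. Consequently $(\gammaext_0 w_\ell)(z_i)\neq(J_{\ell,p'}G_\ell)(z_i)$ whenever the latter is nonzero. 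In particular, your fallback of verifying the literal weights facet by facet, for $k=1$ and then general $k$, cannot succeed.

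\textbf{The missing idea.} Normalize by multiplicity rather than by patch size. Keep the relative-interior sets $P(z)$ and set $M_{z'}\coloneqq\#\{z\in\NN_\ell^\Gamma : z'\in P(z)\}$. This satisfies $M_{z'}\ge 1$, because $z'\in P(z')$. Define $\xi_\ell^z\coloneqq\sum_{z'\in P(z)}M_{z'}^{-1}\zeta_{z'}|_\Gamma$. Then:
\begin{itemize}
\item $\xi_\ell^z$ is supported in $\Gamma\cap\widetilde{\Omega}_\ell^{[k]}[z]$ and vanishes on its relative boundary. So \eqref{eq:wlocdef} has compatible data, and the zero extension of $w_\ell^z$ has trace $\xi_\ell^z J_{\ell,p'}G_\ell$ on all of $\Gamma$.
\item Interchanging the sums gives $\sum_z\xi_\ell^z=\sum_{z'}\zeta_{z'}|_\Gamma=1$ by \eqref{eq:hat}, with no symmetry needed.
\end{itemize}
The simplest admissible choice is $\xi_\ell^z\coloneqq\zeta_z|_\Gamma$. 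With the definition modified this way, your argument, which is then the paper's, yields \eqref{eq:potential}.
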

\begin{proof}
Since $w_\ell^z|_{\partial \widetilde{\omega}_\ell^{z,k} \setminus \Gamma} = 0$ for all $z \in \NN_\ell^\Gamma$, extension by zero preserves continuity of $w_\ell^z$ across the patch boundary $\partial \widetilde{\omega}_\ell^{z,k} \cap \Omegaext$. Thus, we obtain $w_\ell^z \in H^1_\rho(\Omegaext)$ and, consequently, $w_\ell \in H^1_\rho(\Omegaext)$. Since the hat functions constitute a partition of unity on $\Gamma$ (see~\eqref{eq:hat}), we obtain 
\begin{equation} \label{eq:primaltrace}
 \gammaext_0 w_\ell 
 = \sum_{z \in \NN_\ell^\Gamma} \xi_\ell^z J_{\ell,p'}(G_\ell) 
 = J_{\ell,p'}(G_\ell) \sum_{z \in \NN_\ell^\Gamma} \abs{\widetilde{\omega}_\ell^z \cap \NN_\ell^\Gamma}^{-1} \sum_{z' \in \widetilde{\omega}_\ell^z \cap \NN_\ell^\Gamma} \zeta_{z'} 
 \eqreff*{eq:hat}{=} J_{\ell,p'} G_\ell.
\end{equation}
This concludes the proof.
\end{proof}}

\begin{remark}
 One could also adopt the initial approach of \cite{Kurz2019} and consider a global formulation of the primal problem on the patch of $\Gamma$, i.e., find $w_\ell \in \SS^{p'}(\widetilde{\TT}^{[k]}[\Gamma])$ such that
\begin{equation} \label{eq:wglobsolve}
 \begin{split}
  \dual{\nabla w_\ell}{\nabla v_\ell}_{\widetilde{\Omega}_\ell^{[k]}[\Gamma]} &= 0 \quad \text{for all } v_\ell \in \SS^{p'}_0(\widetilde{\TT}^{[k]}[\Gamma]) \\
  w_\ell^z|_{\partial \widetilde{\Omega}_\ell^{[k]}[\Gamma]} 
  &= 
  \begin{cases} 
   J_{\ell,p'}(G_\ell) & \text{on } \partial \widetilde{\Omega}_\ell^{[k]}[\Gamma] \cap \Gamma, \\
   0 & \text{on } \partial \widetilde{\Omega}_\ell^{[k]}[\Gamma] \setminus \Gamma.
  \end{cases}
 \end{split}
\end{equation}
However, solving local problems~\eqref{eq:wdef} instead of a global problem lowers the computational costs.
\end{remark}
\begin{remark}
Considering Corollary~\ref{cor:corext}, we procede analogously to~\eqref{eq:wdef} but replace $\widetilde{\omega}_{\ell}^{z,k} \subset \Omegaext$ by $\omega_{\ell}^{z,k} \coloneqq \Omega_\ell^{[k]}[z] \subseteq \Omega$ and solve the local problems
 \begin{subequations} \label{eq:wintdef}
  \begin{equation} \label{eq:wintlocdef}
   \begin{split}
    \dual{\nabla w_\ell^z}{\nabla v_\ell}_{\omega_\ell^{z,k}} 
    &= 0 \quad \text{for all } v_\ell \in \SS^{p'+1}_0(\TT_\ell^{[k]}[z]) \\
    w_\ell^z|_{\partial \omega_\ell^{z,k}} &= 
    \begin{cases}
     \xi_\ell^z J_{\ell,p'}(G_\ell) & \text{on } \partial \omega_\ell^{z,k} \cap \Gamma, \\
     0 & \text{on } \partial \omega_\ell^{z,k} \setminus \Gamma.
    \end{cases}
   \end{split}
  \end{equation}
  Analogous arguments as in the proof of Proposition~\ref{prop:potential} show that the extension by zero of $w_\ell^z$ to $\Omega$ lies in $H^1(\Omega)$ and that the function
  \begin{equation} \label{eq:wintglobdef}
   w_\ell 
   \coloneqq \sum_{z \in \NN_\ell^\Gamma} w_\ell^z \in H^1(\Omega)
  \end{equation}
  \end{subequations}
  satisfies $\gammaint_0 w_\ell = J_{\ell,p'} G_\ell$.
  Thus, $w_\ell$ is a valid candidate for the upper bound~\eqref{eq:funcupperboundalt2} in Corollary~\ref{cor:corext}.
  Note that this also eliminates the need for the exterior mesh $\widetilde{\TT}_\ell$ providing an additional advantage.
\end{remark}
\section{Adaptive FEM-BEM algorithm} \label{sec:adaptive}

\subsection{Adaptive algorithm based on Theorem~\ref{thm:mainresult}} \label{subsec:adaptiveext}

In addition to $\Omega$, recall that $\widetilde{\Omega} \subset \R^d \setminus \overline{\Omega} = \Omegaext$ is a bounded polygonal Lipschitz domain with $\Gamma = \partial \Omega \cap \partial \widetilde{\Omega}$.
All our numerical experiments start with a conforming simplicial triangulation $\widehat{\TT}_0$ of the overall domain $\widehat{\Omega} \coloneqq \overline{\Omega} \cup \widetilde{\Omega}$, which satisfies
\begin{equation*}
 \bigcup \set{T \in \widehat{\TT}_0 \given T \subseteq \overline{\Omega}} = \overline{\Omega},
\end{equation*} 
i.e., $\Omega$ and therefore also $\widetilde{\Omega}$ and $\Gamma$ are fully resolved by the initial mesh $\widehat{\TT}_0$. 
For local mesh-refinement, we employ newest-vertex bisection \cite{Stevenson2008}. 
Recall the $\widehat{\TT}_\ell$-induced meshes $\TT_\ell \coloneqq \widehat{\TT}_\ell|_{\Omega}$, $\widetilde{\TT}_\ell \coloneqq \widehat{\TT}_\ell|_{\widetilde{\Omega}}$, and $\TT_\ell^\Gamma = \TT_\ell|_\Gamma = \widetilde{\TT}_\ell|_\Gamma$ on $\Gamma$.
We compute $\bm{\sigma}_\ell \in \BB\DD\MM^{q'+1}(\TT_\ell)$ and $w_\ell \in H^1_\rho(\Omegaext)$ as outlined in Section~\ref{subsec:flux}--\ref{subsec:potential}, respectively.
The \textsl{a~posteriori} error estimate of Theorem~\ref{thm:mainresult} gives rise to the (fully computable) local error indicators
\begin{equation} \label{eq:indicators}
 \begin{split}
  \eta_{\ell}^{\rm{int}}(T) 
  &\coloneqq \norm{\AAA u_\ell - \bm{\sigma}_\ell}_T 
  \quad \text{for } T \in \TT_\ell, \\
  \widetilde{\eta}_\ell^{\rm{ext}}(T) 
  &\coloneqq \norm{\nabla w_\ell}_T 
  \quad \text{for } T \in \widetilde{\TT}_\ell, \\
  \mathrm{osc}_\ell^{\Omega}(T) &\coloneqq h_T\norm{(1 - Q_{\ell,q'}^\Omega)f}_T \quad \text{for } T \in \TT_\ell, \\
  \widetilde{\mathrm{osc}}_\ell^{\rm{D}}(T) &\coloneqq {\frac{1}{2}} h_T^{1/2}\norm{\nabla_\Gamma (1 - J_{\ell,p'})G_\ell}_{\partial T \cap \Gamma} \quad \text{for } T \in \TT_\ell \cup \widetilde{\TT}_\ell, \\
  \widetilde{\mathrm{osc}}_{\ell}^{\rm{N}}(T) &\coloneqq {\frac{1}{2}}h_T^{1/2}\norm{(1 - Q_{\ell,q'}^\Gamma)\Phi_\ell}_{\partial T \cap \Gamma} \quad \text{for } T \in \TT_\ell \cup \widetilde{\TT}_\ell.
 \end{split}
\end{equation}
In addition, we define $\eta_\ell^{\rm{int}}(T) \coloneqq \mathrm{osc}_{\ell}^{\Omega}(T) \coloneqq 0$ for $T \in \widetilde{\TT}_\ell$ and $\widetilde{\eta}_\ell^{\rm{ext}}(T) \coloneqq 0$ for $T \in \TT_\ell$, as well as
\begin{equation} \label{eq:fullind}
 \begin{split}
  \widetilde{\eta}_\ell(T)^2 
  &\coloneqq \eta_{\ell}^{\rm{int}}(T)^2 + \widetilde{\eta}_{\ell}^{\rm{ext}}(T)^2 + \mathrm{osc}_\ell^{\Omega}(T)^2 + \widetilde{\mathrm{osc}}_\ell^{\rm{D}}(T)^2 + \widetilde{\mathrm{osc}}_\ell^{\rm{N}}(T)^2
 \end{split}
\end{equation}
for all $T \in \widetilde{\TT}_\ell \cup \TT_\ell$.  Furthermore, we denote by
\begin{equation} \label{eq:fullest}
 \widetilde{\eta}_\ell^2 
 \coloneqq \sum_{T \in \TT_\ell \cup \widetilde{\TT}_\ell} \widetilde{\eta}_\ell(T)^2
\end{equation}
the full error estimator.
Due to Theorem~\ref{thm:mainresult} and~\eqref{eq:funcupperbound}, the error estimator $\widetilde{\eta}_\ell$ is reliable in the sense that
\begin{equation*}
 \EE(u_\ell,\phi_\ell) \leq \const{\widetilde{C}}{rel} \widetilde{\eta}_\ell
 \quad \text{with } \const{\widetilde{C}}{rel} \coloneqq \sqrt{5} \max \set{\const{C}{mon}^{-1/2},1,{\const{C}{D}},\pi^{-1} \const{C}{mon}^{-1/2}, {\const{C}{N}\const{C}{mon}^{-1/2}}}. 
\end{equation*}

Based on the error indicators \eqref{eq:indicators} and the D\"orfler criterion~\cite{Dorfler1996}, we propose the following adaptive algorithm, whose performance will be investigated in the subsequent section. 
\begin{algorithm}[Adaptive loop] \label{algo:adap}
\textbf{Input$\colon$} Initial triangulation $\widehat{\TT}_0$ {of $\widehat{\Omega} = \overline{\Omega} \cup \widetilde{\Omega}$}, tolerance $\eps > 0$, polynomial degrees $q,q' \geq 0$ and $p,p' \geq 1$, patch-size $k \geq 1$, and marking parameter $0 < \theta \leq 1$.

For $\ell = 0,1,2,\ldots$ repeat the following steps until $\widetilde{\eta}_\ell^2 < \eps^2 \colon$ 
\begin{enumerate}[label = \rm(\roman*)]
\item Extract the interior mesh $\TT_\ell \coloneqq \widehat{\TT}_\ell|_\Omega$, the boundary mesh $\TT_\ell^\Gamma \coloneqq \widehat{\TT}_\ell|_\Gamma$, and the exterior mesh $\widetilde{\TT}_\ell \coloneqq \TT_\ell|_{\widetilde{\Omega}}$, as well as the patches $\omega_\ell^z$, $\widetilde{\omega}_\ell^{z,k}$.
\item Compute FEM-BEM approximation $(u_\ell,\phi_\ell) \in \SS^p(\TT_\ell) \times \PP^q(\TT_\ell^\Gamma)$ by solving~\eqref{eq:discprob}.
\item Compute the discretized residual $J_{\ell,p'} G_\ell$ as well as $Q_{\ell,q'}^\Gamma \Phi_\ell$ and $Q_{\ell,q'}^\Omega f$.
\item Solve the local auxiliary problems \eqref{eq:dualmixed} and \eqref{eq:wdef} to obtain $\bm{\sigma}_\ell$ and $w_\ell$.
\item Compute $\widetilde{\eta}_\ell(T)$ for $T \in \widehat{\TT}_\ell$ according to \eqref{eq:fullind} and $\widetilde{\eta}_\ell$ according to \eqref{eq:fullest}.
\item Determine a set $\MM_\ell \subseteq \widehat{\TT}_\ell$ of minimal cardinality such that
\begin{equation*}
\theta \widetilde{\eta}_\ell^2 
 \leq \sum_{T \in \MM_\ell} \widetilde{\eta}_\ell(T)^2.
\end{equation*}
\item Employ newest-vertex bisection {to obtain $\widehat{\TT}_{\ell + 1}$ by refining at least all marked elements $T \in \MM_\ell$}.
\end{enumerate}
\end{algorithm}

\begin{remark}
We emphasize that the polynomial degrees $q' \geq 0$ and $p' \geq 1$ are arbitrary and need not coincide with the polynomial degrees $q$ and $p$ of the approximation spaces used to approximate $(u,\phi)$.  Numerical experiments will show that higher-order auxiliary solutions improve the quality and accuracy of the upper bound \eqref{eq:funcupperbound}.  
\end{remark}

\begin{remark}
Algorithm~\ref{algo:adap} reveals that the approximation $\uext_\ell$ is {never} computed explicitly. 
In fact, the algorithm only requires the computation of the interior approximation $u_\ell$ and the boundary density $\phi_\ell$, whereas $\nabla \uext_\ell$ is only computed {in practice once when} $\eta_\ell$ and {hence} $\EE(u_\ell,\phi_\ell)$ are sufficiently small.
\end{remark}

\bmdefine{\bT}{\TT}
\bmdefine{\bl}{\ell}
\subsection{Adaptive algorithm based on Corollary~\ref{cor:corext} (based on $\texorpdfstring{\boldsymbol{\TT}\!\!_{\boldsymbol{\ell}}}{}$ only)} \label{subsec:adaptiveint}

We proceed analogously to Section~\ref{subsec:adaptiveext}, but consider $\widehat{\Omega} \coloneqq \Omega$, $\widehat{\TT}_\ell \coloneqq \TT_\ell$, and compute $w_\ell \in \SS^{p' + 1}(\TT_\ell) \subset H^1(\Omega)$ as outlined in~\eqref{eq:wintdef} instead of~\eqref{eq:wdef}. 
We define the local error indicators
\begin{equation} \label{eq:intcontributions}
 \begin{split}
  \eta_\ell^{\mathrm{ext}}(T) 
  &\coloneqq \norm{\nabla w_\ell}_T
  \quad \text{for } T \in \TT_\ell, \\
  \mathrm{osc}_\ell^{\mathrm{D}}(T) 
  &\coloneqq h_T^{1/2} \norm{\nabla_\Gamma (1 - J_{\ell,p'})G_\ell}_{\partial T \cap \Gamma} \quad \text{for } T \in \TT_\ell, \\
  \mathrm{osc}_\ell^{\mathrm{N}}(T) 
  &\coloneqq h_T^{1/2} \norm{(1 - Q_{\ell,q'}^\Gamma)\Phi_\ell}_{\partial T \cap \Gamma} \quad \text{for } T \in \TT_\ell.
 \end{split}
\end{equation}
With $\eta_\ell^{\mathrm{int}}$ and $\mathrm{osc}_\ell^\Omega$ from~\eqref{eq:indicators} we define
\begin{equation} \label{eq:intindicators}
 \eta_\ell(T)^2
 \coloneqq \eta_{\ell}^{\rm{int}}(T)^2 + \eta_{\ell}^{\rm{ext}}(T)^2 + \mathrm{osc}_\ell^{\Omega}(T)^2 + \mathrm{osc}_\ell^{\rm{D}}(T)^2 + \mathrm{osc}_\ell^{\rm{N}}(T)^2
 \quad \text{for all } T \in \TT_\ell.
\end{equation}
Due to Corollary~\ref{cor:corext} and~\eqref{eq:funcupperboundalt2}, the error estimator
\begin{equation} \label{eq:fullestalt}
  \eta_\ell^2 
  \coloneqq \sum_{T \in \TT_\ell} \eta_{\ell}(T)^2
\end{equation} 
is reliable in the sense that
\begin{equation*}
 \EE(u_\ell,\phi_\ell) \leq \const{C}{rel} \eta_\ell
 \quad \text{with } \const{C}{rel} \coloneqq \sqrt{5} \max \set{\const{C}{mon}^{-1/2},\const{C}{PS}^{\mathrm{ext}} (\const{c}{PS}^{\mathrm{int}})^{-1},{\const{C}{D}},\pi^{-1} \const{C}{mon}^{-1/2}, {\const{C}{N}\const{C}{mon}^{-1/2}}}. 
\end{equation*}
Based on the indicators~\eqref{eq:intindicators} and the D\"orfler marking criterion~\cite{Dorfler1996}, we may consider the following adaptive algorithm as an alternative to Algorithm~\ref{algo:adap}, which only uses the interior mesh $\TT_\ell$.
\begin{algorithm}[Adaptive loop based on $\boldsymbol{\TT}\!\!_{\boldsymbol{\ell}}$ only] \label{algo:adapint}
\textbf{Input$\colon$} Initial triangulation $\TT_0$ of $\Omega$, tolerance $\eps > 0$, polynomial degrees $q,q' \geq 0$ and $p,p' \geq 1$, patch-size $k \geq 1$, and marking parameter $0 < \theta \leq 1$.
For $\ell = 0,1,2,\ldots$ repeat the following steps until $\eta_\ell^2 < \eps^2 \colon$
\begin{enumerate}[label = \rm(\roman*)]
\item Extract the boundary mesh $\TT_\ell^\Gamma \coloneqq \TT_\ell|_\Gamma$ as well as the patches $\omega_\ell^z$ and $\omega_\ell^{z,k}$.
\item Compute FEM-BEM approximation $(u_\ell,\phi_\ell) \in \SS^p(\TT_\ell) \times \PP^q(\TT_\ell^\Gamma)$ by solving~\eqref{eq:discprob}.
\item Compute the discretized residual $J_{\ell,p'} G_\ell$ as well as $Q_{\ell,q'}^\Gamma \Phi_\ell$ and $Q_{\ell,q'}^\Omega f$.
\item Solve the local auxiliary problems \eqref{eq:dualmixed} and \eqref{eq:wintdef} to obtain $\bm{\sigma}_\ell$ and $w_\ell$.
\item Compute $\eta_\ell(T)$ for $T \in \TT_\ell$ according to \eqref{eq:intindicators} and $\eta_\ell$ according to \eqref{eq:fullestalt}.
\item Determine a set $\MM_\ell \subseteq \TT_\ell$ of minimal cardinality such that
\begin{equation*}
 \theta \eta_\ell^2 
 \leq \sum_{T \in \MM_\ell} \eta_\ell(T)^2.
\end{equation*}
\item Employ newest-vertex bisection {to obtain $\TT_{\ell + 1}$ by refining at least all marked elements $T \in \MM_\ell$}.
\end{enumerate}
\end{algorithm}

\section{Numerical experiments} \label{sec:numerics}

This section presents some numerical experiments in 2D that illustrate the performance and accuracy of Algorithm~\ref{algo:adap} and Algorithm~\ref{algo:adapint}. 
All experiments are carried out by the MATLAB-toolbox HILBERT from~\cite{Aurada2014} for the lowest-order FEM-BEM coupling with $p = 1$ and $q = 0$.
We consider $p' = 1$, $q' = 0$, and $k = 2$ for the auxiliary problems in Section~\ref{subsec:flux}--\ref{subsec:potential}.
Throughout, we consider and experimentally compare Algorithm~\ref{algo:adap} and Algorithm~\ref{algo:adapint} for uniform ($\theta = 1$) and adaptive ($0 < \theta < 1$) mesh refinement. All involved domains $\Omega \subset \R^2$ satisfy the scaling condition $\diam(\Omega) < 1$ that ensures ellipticity of the 2D single-layer integral operator $V$.

\subsection{Example 1 (Square domain, linear diffusion)} \label{subsec:squareex}

We consider the transmission problem~\eqref{eq:nltransmission} with linear diffusion $\AAA (x,y) \coloneqq (x/100,100y)$ and constant data $(f,\const{g}{D},\const{g}{N}) \coloneqq (1,0,0)$ on the square domain $\Omega \coloneqq (-1/4,1/4)^2$. Note that the monotonicity constant of $\AAA$ is $\const{C}{mon} = 1/100$, while the Lipschitz constant is $\const{C}{Lip} = 100$.

We start Algorithm~\ref{algo:adap} with a uniform initial triangulation $\widehat{\TT}_0$ comprised of $64$ rectangular triangles, where $\TT_0 = \widehat{\TT}_0|_\Omega$ consists of $16$ triangles and $\TT_0^\Gamma = \widehat{\TT}_\ell|_\Gamma$ consists of $8$ edges. Figure~\ref{fig:squaremeshesext} shows the initial mesh, some adaptively refined meshes, and the corresponding union of the boundary patches $\widetilde{\omega}_\ell^{z,k}$ (see Section~\ref{subsec:potential}) generated by Algorithm~\ref{algo:adap} for $k = 2$ and $\theta = 0.4$. For comparison, Figure~\ref{fig:squaremeshesint} depicts some meshes and patches generated by Algorithm~\ref{algo:adapint}, where we employ the same initial meshes $\TT_0$ and $\TT_0^\Gamma$ as for Algorithm~\ref{algo:adap}.

\begin{figure}[!ht]
 \resizebox{\textwidth}{!}{
   \subfloat{
    \stackunder{\stackunder[10pt]{\includegraphics{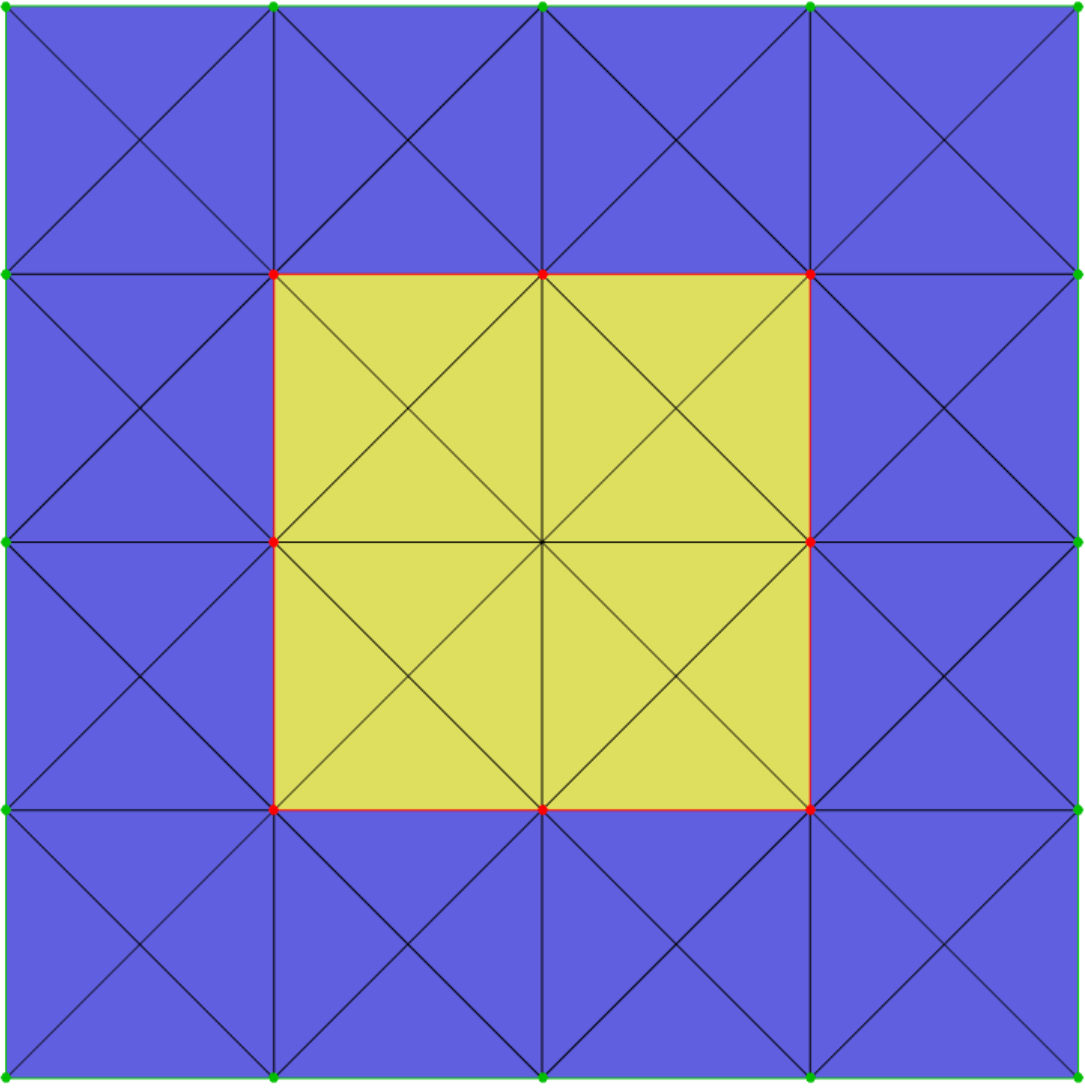}}{{\Huge$\# \TT_\ell = 16$, $\#\widehat{\TT_\ell} = 64$}}}{{\Huge $\ell=0$}}
   }
   \subfloat{
    \stackunder{\stackunder[10pt]{\includegraphics{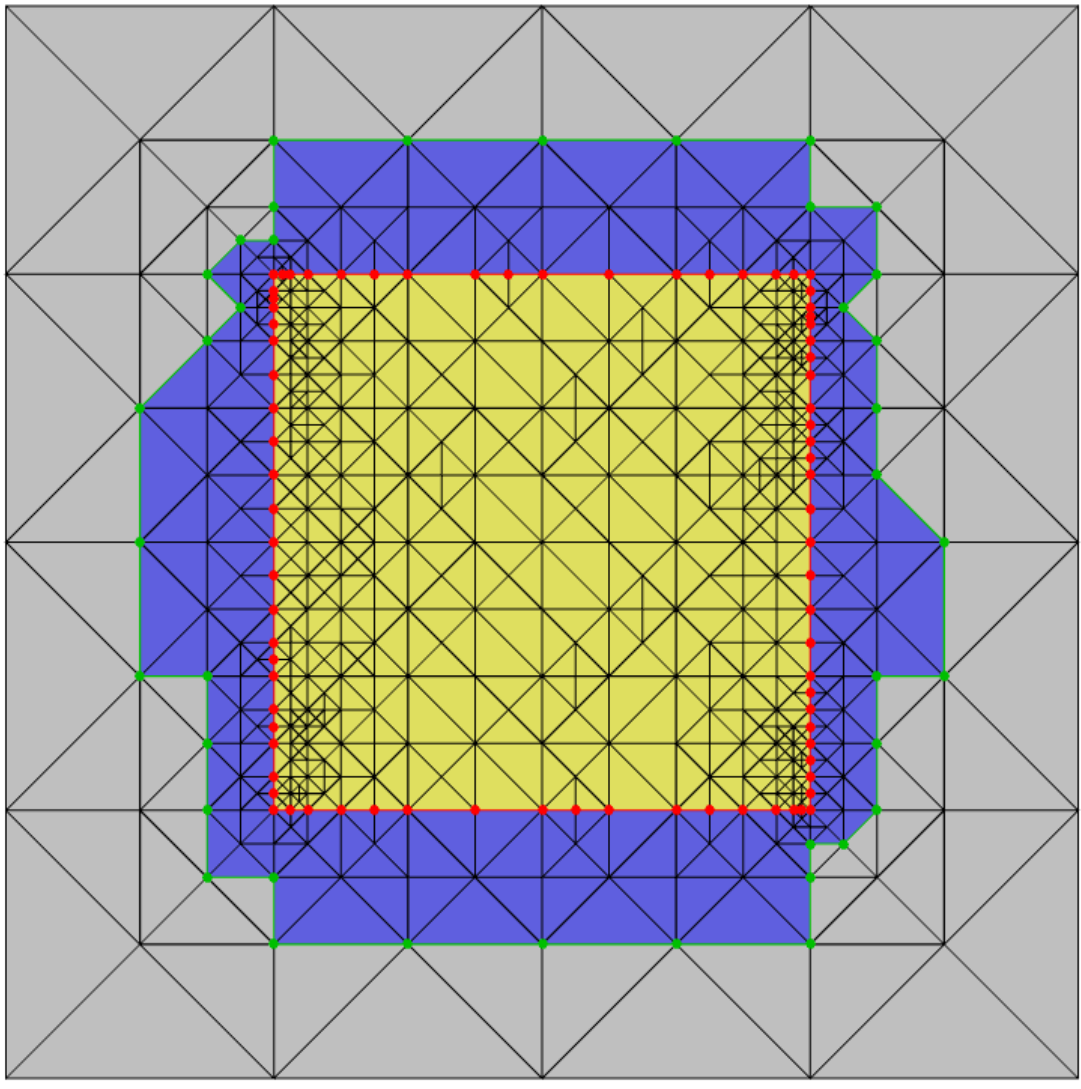}}{{\Huge $\# \TT_\ell = 642$, $\#\widehat{\TT_\ell} = 1014$}}}{{\Huge $\ell=20$}}
   }
   
   \subfloat{
    \stackunder{\stackunder[10pt]{\includegraphics{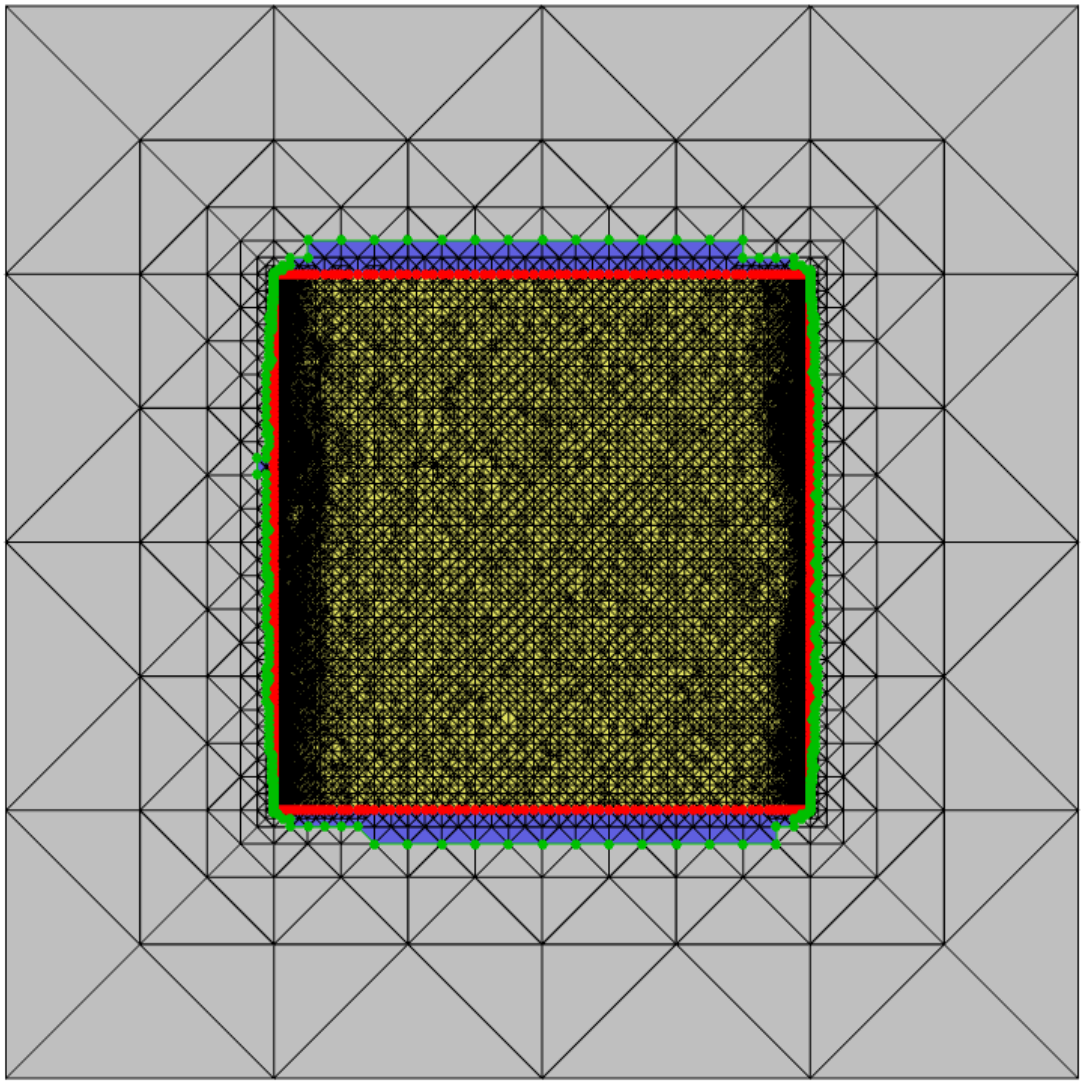}}{{\Huge $\# \TT_\ell = 55373$, $\#\widehat{\TT_\ell} = 62548$}}}{{\Huge $\ell=50$}}
   }
  }
   \caption{Meshes generated by Algorithm~\ref{algo:adap} in Example~\ref{subsec:squareex} for $\theta = 0.4$ and $k = 2$. Only triangles not colored in gray contribute to the computation of the error indicators~\eqref{eq:indicators}. The triangles of the patches $\widetilde{\omega}_\ell^{z,k}$ are depicted in {\color{blue} blue}, whereas the triangles in $\Omega$ are indicated in {\color{yellow} yellow} and the remaining triangles in $\Omegaext$ in {\color{gray} gray}. The inner boundary $\Gamma$ is shown in {\color{red} red} and the outer boundary of the union of the patches in {\color{green} green}.}
    \label{fig:squaremeshesext}
\end{figure}

\begin{figure}[!ht]
 \resizebox{\textwidth}{!}{
   \subfloat{
    \stackunder[10pt]{\includegraphics{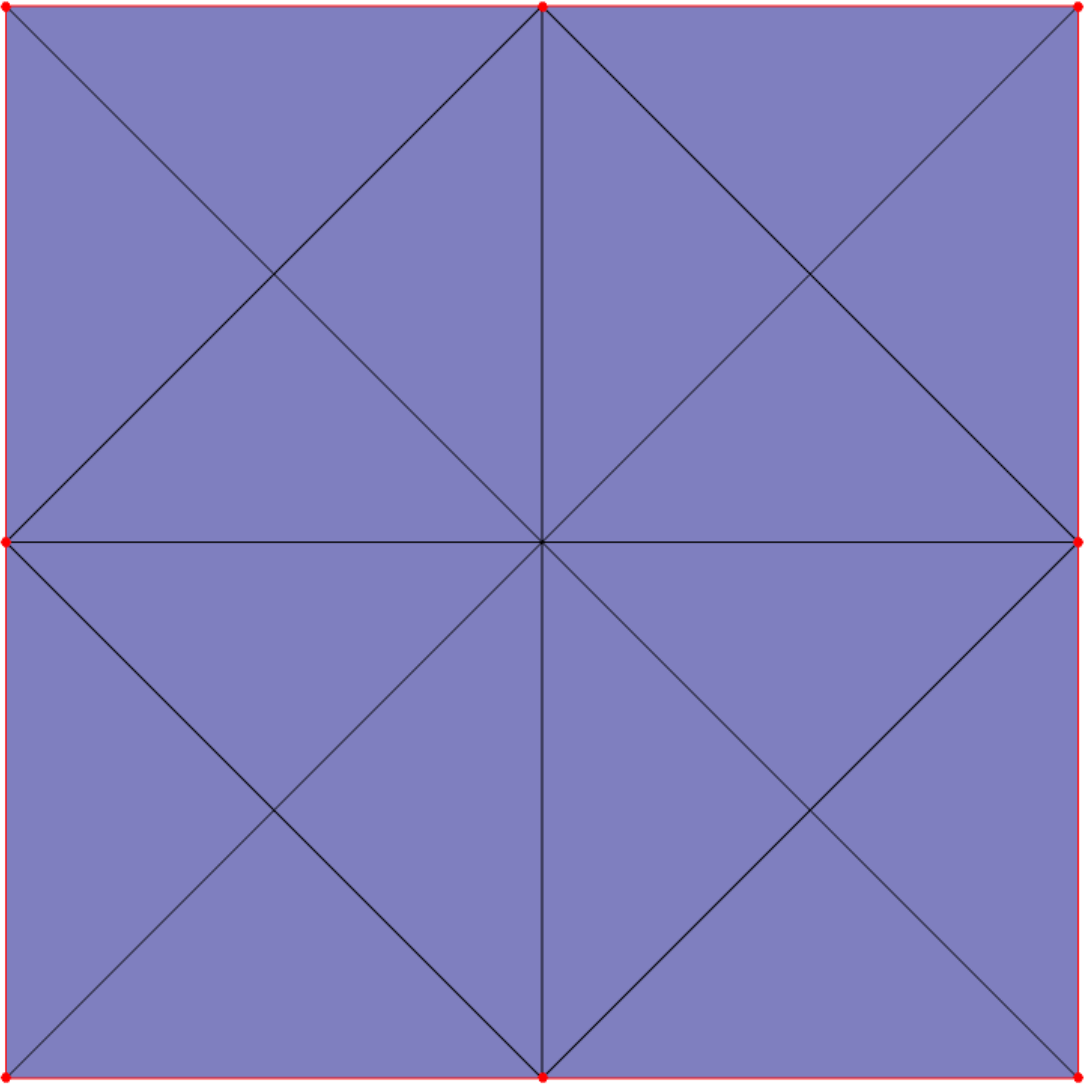}}{{\Huge$\# \TT_\ell = 16$, $\ell = 0$}}
   }
   \subfloat{
    \stackunder[10pt]{\includegraphics{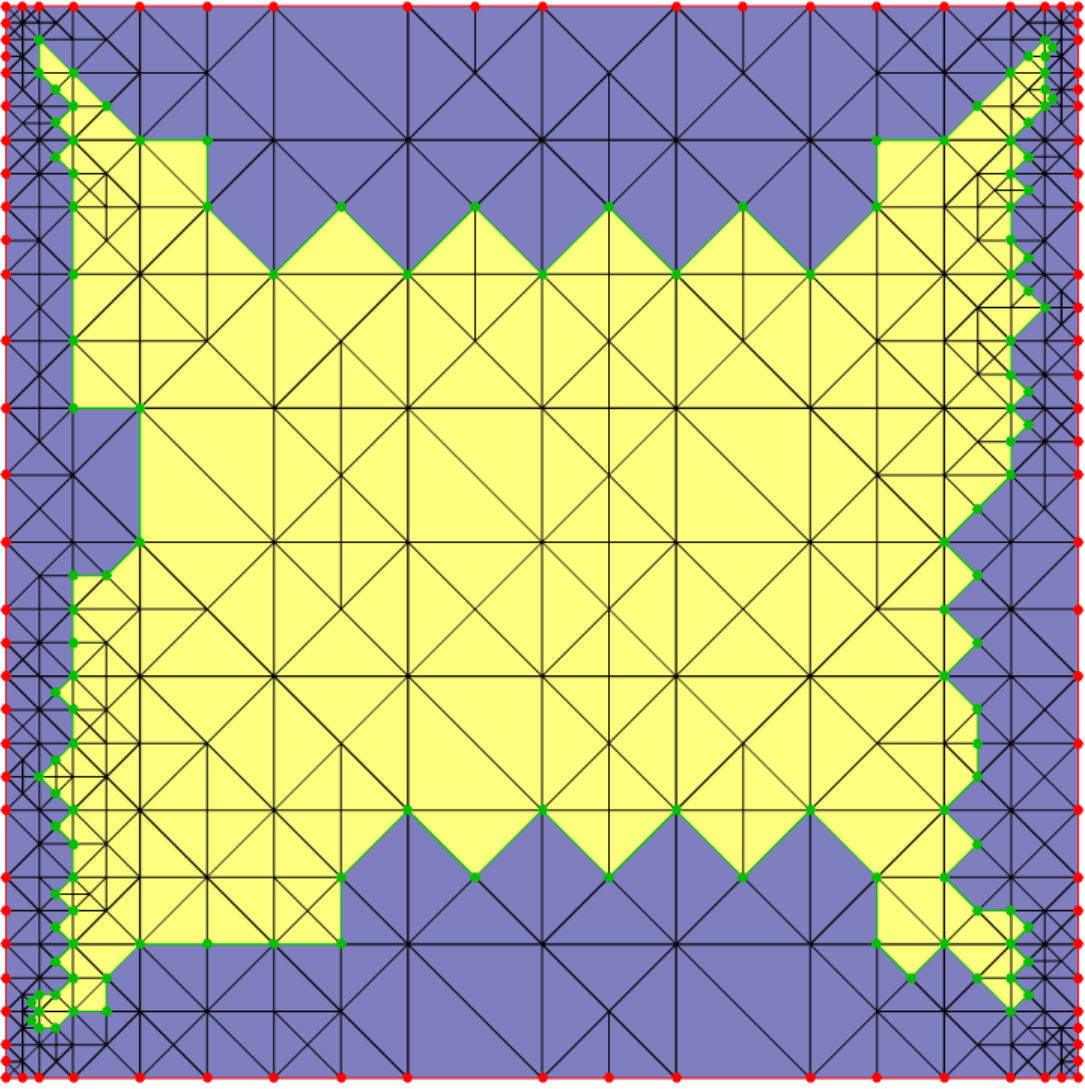}}{{\Huge $\# \TT_\ell = 830$, $\ell = 20$}}
   }
   
   \subfloat{
   \stackunder[10pt]{\includegraphics{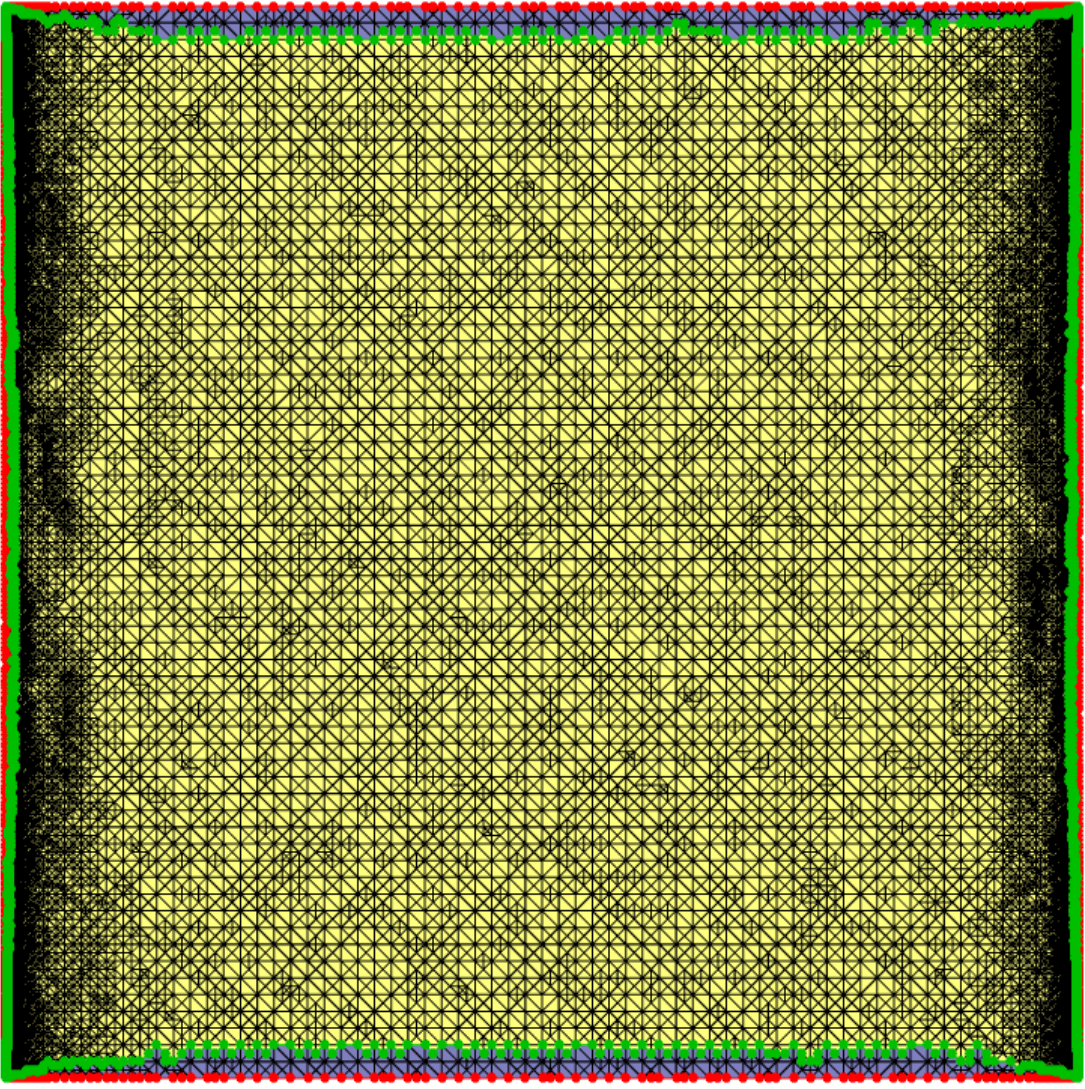}}{{\Huge $\# \TT_\ell = 64594$, $\ell = 50$}}
   }
  }
   \caption{Meshes generated by Algorithm~\ref{algo:adapint} in Example~\ref{subsec:squareex} for $\theta = 0.4$ and $k = 2$. The triangles of the patches $\omega_\ell^{z,k}$ are depicted in {\color{blue} blue}, while the remaining triangles are indicated in {\color{yellow} yellow}. The outer boundary $\Gamma$ is shown in {\color{red} red} and the inner boundary of the union of the patches in {\color{green} green}.}
    \label{fig:squaremeshesint}
\end{figure}
The total upper bound $\widetilde{\eta}_\ell$ from~\eqref{eq:fullest} generated by Algorithm~\ref{algo:adap} with $k = 2$ and different marking parameters $0 < \theta \leq 1$ is shown in Figure~\ref{fig:squareratesext}~(left). We observe that Algorithm~\ref{algo:adap} exhibits optimal convergence rates $\EE(u_\ell,\phi_\ell) = \OO(\widetilde{\eta}_\ell) = \OO((\# \TT_\ell)^{-1/2})$ for any marking parameter $\theta \in \set{0.2,0.4,0.6,0.8}$ with only minimal differences with respect to the particular choice of $\theta$, while uniform refinement leads to suboptimal convergence. Figure~\ref{fig:squareratesext}~(right) depicts the different contributions $\eta_\ell^{\rm{int}}$, $\widetilde{\eta}_\ell^{\rm{ext}}$, $\widetilde{\mathrm{osc}}_\ell^{\rm{D}}$, and $\widetilde{\mathrm{osc}}_\ell^{\rm{N}}$ to the total error estimator $\widetilde{\eta}_\ell$ for $\theta = 0.4$ (note that $\mathrm{osc}_\ell^\Omega = 0$). 
\begin{figure}[!ht]
  \resizebox{\textwidth}{!}{
   \subfloat{
    \includegraphics{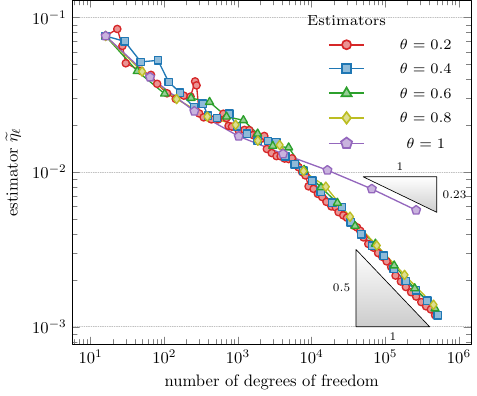}
   }
   \subfloat{
    \includegraphics{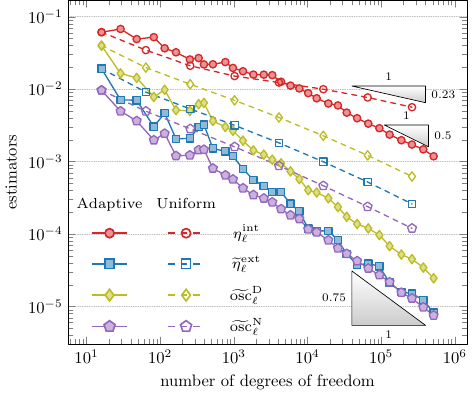}
   }
  }
   \caption{Convergence rates of the full error estimator $\widetilde{\eta}_\ell$ from~\eqref{eq:fullest} in Algorithm~\ref{algo:adap} for $k = 2$ in Example~\ref{subsec:squareex} for different marking parameters $0 < \theta \leq 1$ (left). Comparison of the different contributions $\eta_\ell^{\mathrm{int}}$, $\widetilde{\eta}_\ell^{\mathrm{ext}}$, $\widetilde{\mathrm{osc}}_\ell^{\mathrm{D}}$, and $\widetilde{\mathrm{osc}}_\ell^{\mathrm{N}}$ of $\widetilde{\eta}_\ell$ for $\theta = 0.4$ (right).}
   \label{fig:squareratesext}
 \end{figure}
As usual for FEM-BEM computations, we observe individual optimal rates, i.e., rate $1/2$ for the FEM part and $3/4$ for the BEM part; see~\cite{Melenk2014} for a rigorous mathematical argument for smooth solutions on uniform meshes. Similarly to Figure~\ref{fig:squareratesext}, Figure~\ref{fig:squareratesint} depicts the total upper bound $\eta_\ell$ from~\eqref{eq:fullestalt} generated by Algorithm~\ref{algo:adapint} with $k = 2$ and different marking parameters $0 < \theta \leq 1$ (left), as well as the different contributions $\eta_\ell^{\rm{int}}$, $\eta_\ell^{\rm{ext}}$, $\mathrm{osc}_\ell^{\rm{D}}$, and $\mathrm{osc}_\ell^{\rm{N}}$ to the total error estimator $\eta_\ell$ for $\theta = 0.4$ (right). We again observe optimal convergence rates for any marking parameter $\theta \in \set{0.2,0.4,0.6,0.8}$, while uniform refinement leads to suboptimal convergence.
\begin{figure}[!ht]
  \resizebox{\textwidth}{!}{
   \subfloat{
    \includegraphics{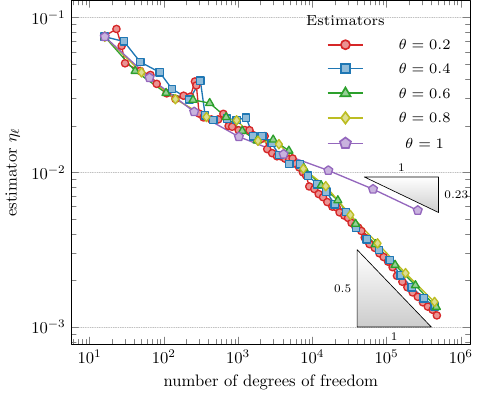}
   }
   \subfloat{
    \includegraphics{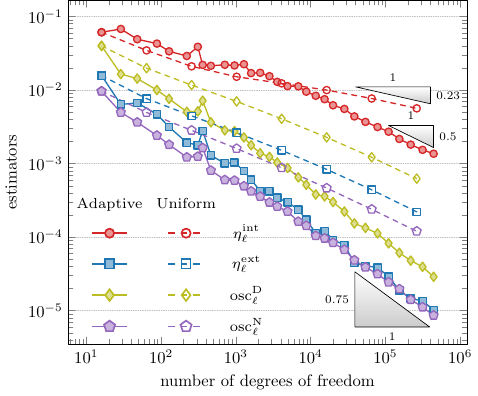}
   }
  }
   \caption{Convergence rates of the full error estimator $\eta_\ell$ from~\eqref{eq:fullestalt} in Algorithm~\ref{algo:adapint} for $k = 2$ in Example~\ref{subsec:squareex} for different marking parameters $0 < \theta \leq 1$ (left). Comparison of the different contributions $\eta_\ell^{\mathrm{int}}$, $\eta_\ell^{\mathrm{ext}}$, $\mathrm{osc}_\ell^{\mathrm{D}}$, and $\mathrm{osc}_\ell^{\mathrm{N}}$ of $\eta_\ell$ for $\theta = 0.4$ (right).}
   \label{fig:squareratesint}
 \end{figure}

Figure~\ref{fig:squareintext} shows a comparison between Algorithm~\ref{algo:adap} and Algorithm~\ref{algo:adapint}. We observe that both approaches lead to similar results, while the computation in the interior domain renders the need for the exterior mesh $\widetilde{\TT}_\ell$ obsolete. This favors the approach of Corollary~\ref{cor:corext} and Algorithm~\ref{algo:adapint} in practice.
\begin{figure}[!ht]
  \resizebox{\textwidth}{!}{
    \includegraphics[scale = 0.1]{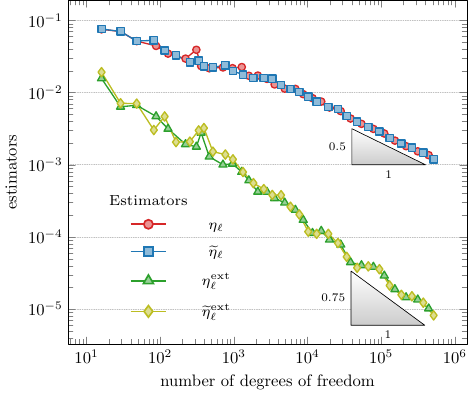}
  }
   \caption{Comparison between the estimators $\eta_\ell, \eta_\ell^{\mathrm{ext}}$ from~\eqref{eq:intindicators}--\eqref{eq:fullestalt} in Algorithm~\ref{algo:adapint} and $\widetilde{\eta}_\ell$, $\widetilde{\eta}_\ell^{\mathrm{ext}}$ from~\eqref{eq:indicators}--\eqref{eq:fullest} in Algorithm~\ref{algo:adap} for Example~\ref{subsec:squareex} with $k = 2$ and $\theta = 0.4$.}
   \label{fig:squareintext}
 \end{figure}

While \eqref{eq:funcupperbound} measures the error between $(u_\ell,\phi_\ell)$ and the exact solution $(u,\phi)$ of the variational formulation of the symmetric Costabel--Han coupling~\cite{Costabel1987,Han1990}, Algorithm~\ref{algo:adap}--\ref{algo:adapint} can also be seen as a way to approximate the functions $(u_{\rm{JN}}, u^{\mathrm{ext}}_{\rm{JN}})$ and $(u_{\rm{BM}},u^{\mathrm{ext}}_{\rm{BM}})$ extracted from the unique solutions $(u_{\rm{JN}},\phi_{\rm{JN}})$ and $(u_{\rm{BM}},\phi_{\rm{BM}})$ of the Johnson--Nédélec coupling (see \cite{Johnson1980}) and the Bielak--MacCamy coupling (see \cite{Bielak1995}), respectively; see also~\cite{Aurada2012a} for a concise presentation of all these coupling procedures. 
There holds
\begin{equation*}
 (u,\phi) 
 = (u_{\rm{JN}},\phi^{\mathrm{ext}}_{\rm{JN}}) 
 = (u_{\rm{BM}},(K'-1/2)\phi_{\rm{BM}})
\end{equation*}
and 
\begin{equation*}
 (u,u^{\mathrm{ext}}) 
 = (u_{\rm{JN}},u^{\mathrm{ext}}_{\rm{JN}}) 
 = (u_{\rm{BM}},u^{\mathrm{ext}}_{\rm{BM}}).
\end{equation*}
We stress that existence and uniqueness of Galerkin solutions to the Johnson--Nédélec coupling and the Bielak--MacCamy coupling require additional mild conditions on the ellipticity constant of $\AAA$; see, e.g.,~\cite{Aurada2012a,Ferrari2023}.

Figure~\ref{fig:squarejnbm} shows the full error estimator $\eta_\ell$ from~\eqref{eq:fullestalt} and its contributions $\eta_\ell^{\mathrm{int}}$, $\eta_\ell^{\mathrm{ext}}$, $\mathrm{osc}_\ell^{\mathrm{D}}$, and $\mathrm{osc}_\ell^{\mathrm{N}}$ generated by Algorithm~\ref{algo:adapint} for adaptive refinement ($\theta = 0.4$) with respect to the Galerkin approximations of the Johnson--Nédélec coupling (left) and the Bielak--MacCamy coupling (right). 
We observe that Algorithm~\ref{algo:adapint} exhibits optimal convergence rates also for these couplings.
\begin{figure}[!ht]
  \resizebox{\textwidth}{!}{
   \subfloat{
    \includegraphics{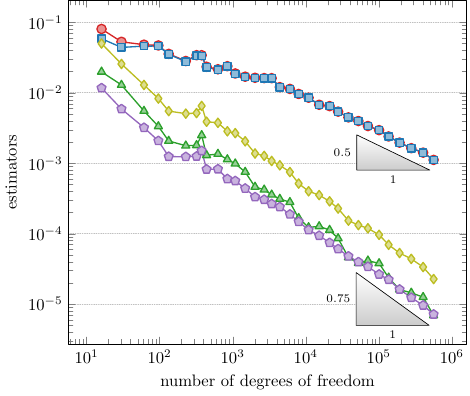}
   }
   \subfloat{
    \includegraphics{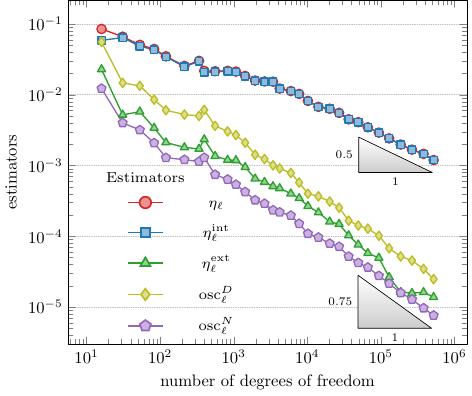}
   }
  }
   \caption{Comparison of the different contributions $\eta_\ell^{\mathrm{int}}$, $\eta_\ell^{\mathrm{ext}}$, $\mathrm{osc}_\ell^{\mathrm{D}}$, and $\mathrm{osc}_\ell^{\mathrm{N}}$ of $\eta_\ell$ from~\eqref{eq:fullestalt} in Algorithm~\ref{algo:adapint} for Example~\ref{subsec:squareex} with $\theta = 0.4$ for the Johnson--Nédélec coupling (left) and the Bielak--MacCamy coupling (right).}
   \label{fig:squarejnbm}
 \end{figure}

 \subsection{Example 2 (L-shaped domain, linear diffusion)} \label{subsec:lshapeex}

We consider the transmission problem~\eqref{eq:nltransmission} with linear diffusion $\AAA (x,y) \coloneqq (x,y)$ on the scaled and rotated L-shaped domain; see Figure~\ref{fig:lmeshesext} and Figure~\ref{fig:lmeshesint}.
 We prescribe the exact solution by 
\begin{equation*}
 \begin{split}
    u(x,y) 
    = r^{2/3} \sin(2\varphi/3)
    \quad \text{and} \quad 
    u^{\mathrm{ext}}(x,y)
    = 0,
 \end{split}
\end{equation*}
where $(r,\varphi)$ are the polar coordinates centered at the reentrant corner of the L-shaped domain. Unlike Example~\ref{subsec:squareex}, the solution $u$ exhibits a singularity at the reentrant corner $(0,0)$, while $u^{\mathrm{ext}}$ is smooth. Figure~\ref{fig:lmeshesext} and Figure~\ref{fig:lmeshesint} show the initial mesh and some adaptively refined meshes obtained by Algorithm~\ref{algo:adap} and Algorithm~\ref{algo:adapint}, respectively, with $k = 2$ and $\theta = 0.4$, which show that both Algorithms resolve the singularity adequately.

\begin{figure}[!ht]
 \resizebox{\textwidth}{!}{
   \subfloat{
    \stackunder{\stackunder[10pt]{\includegraphics{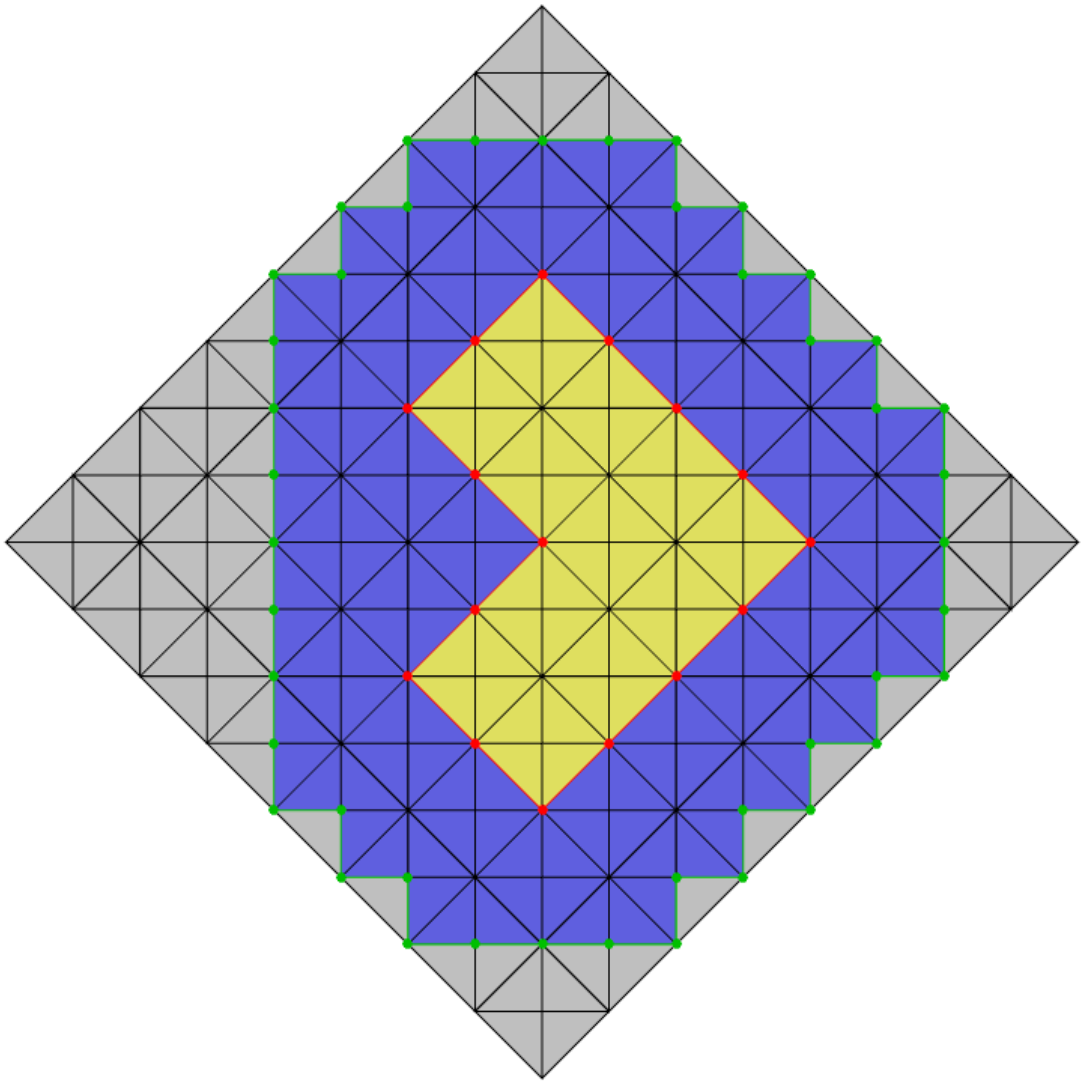}}{{\Huge$\# \TT_\ell = 48$, $\#\widehat{\TT_\ell} = 256$}}}{{\Huge $\ell=0$}}
   }
   \subfloat{
    \stackunder{\stackunder[10pt]{\includegraphics{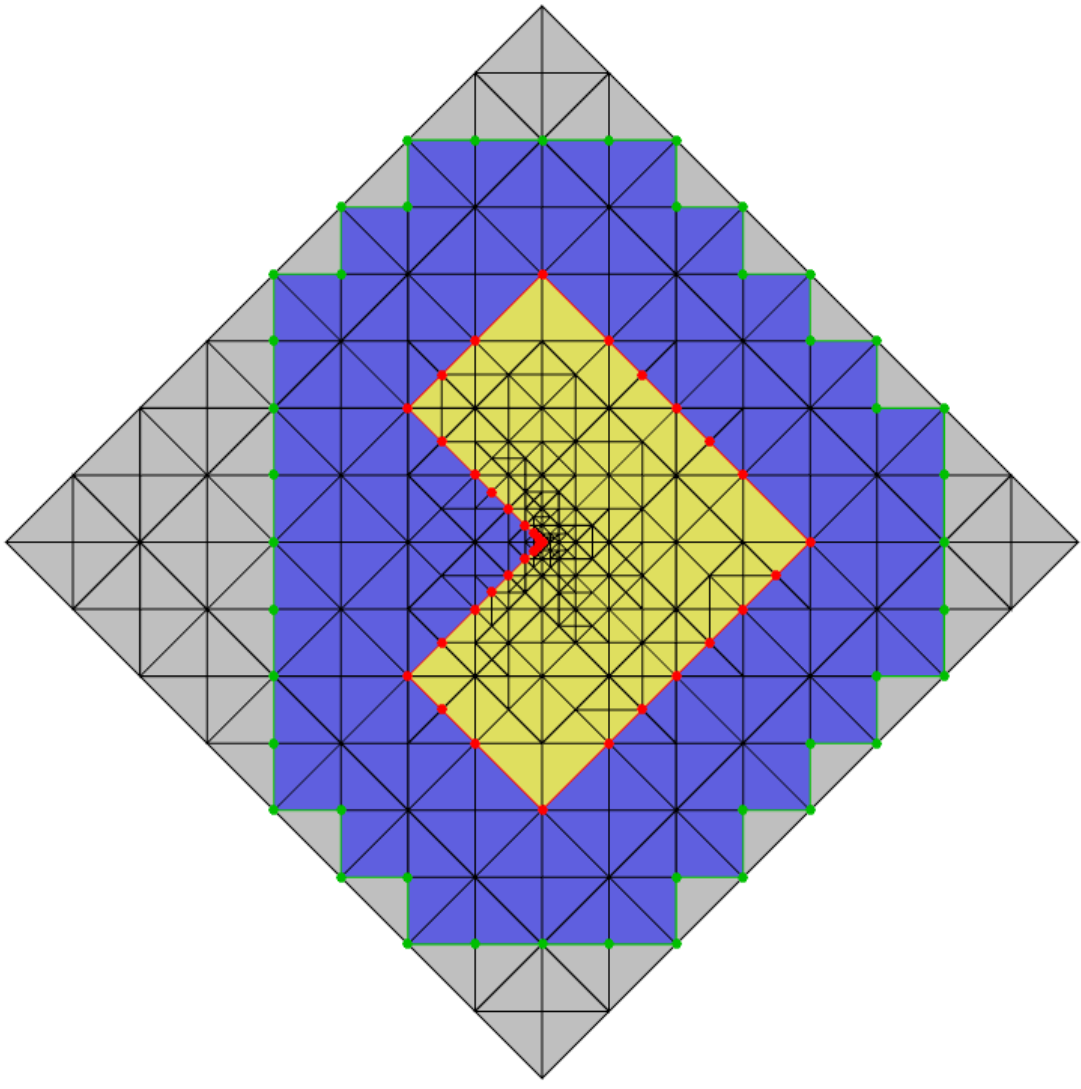}}{{\Huge $\# \TT_\ell = 296$, $\#\widehat{\TT_\ell} = 540$}}}{{\Huge $\ell=10$}}
   }
   
   \subfloat{
    \stackunder{\stackunder[10pt]{\includegraphics{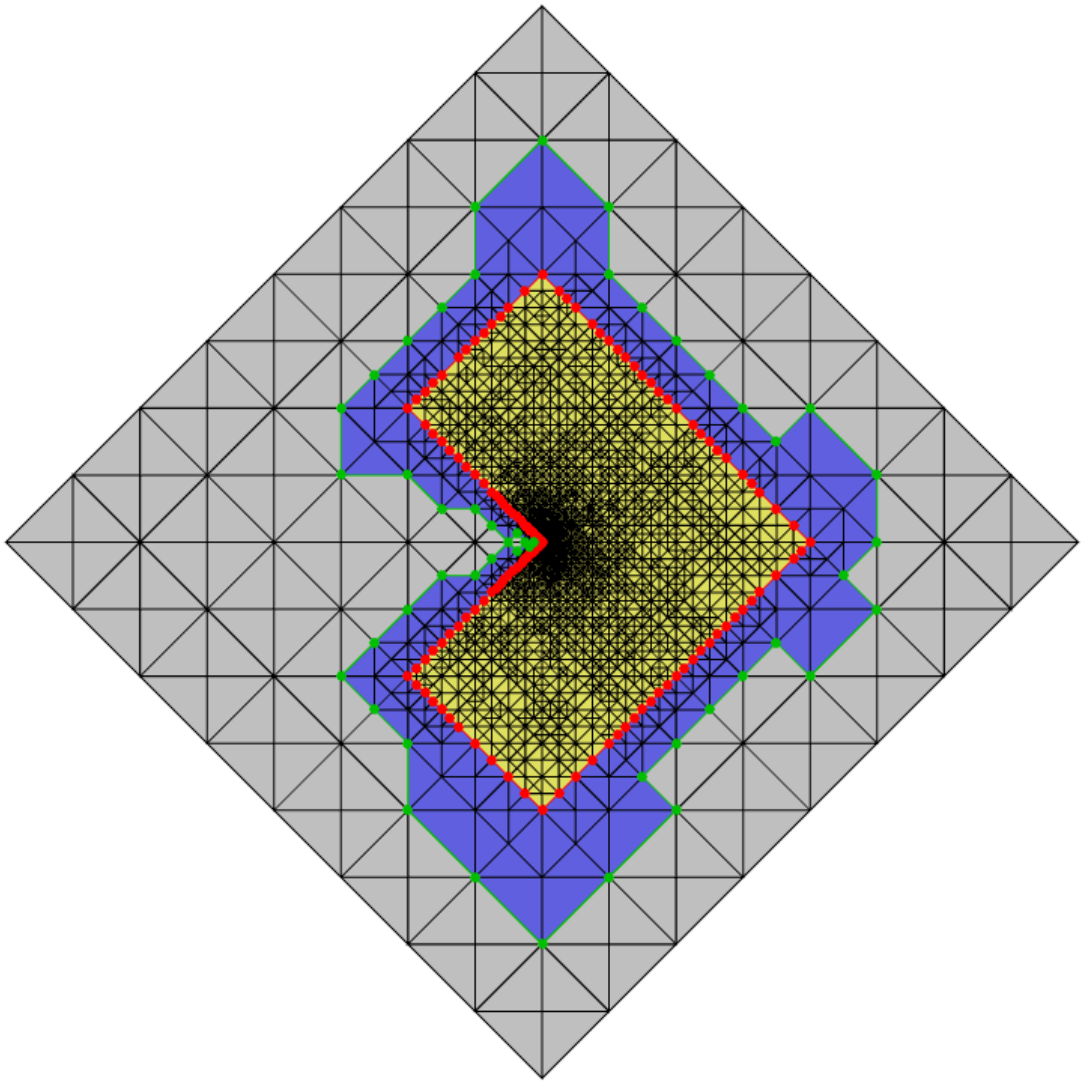}}{{\Huge $\# \TT_\ell = 4415$, $\#\widehat{\TT_\ell} = 5096$}}}{{\Huge $\ell=20$}}
   }
  }
   \caption{Meshes generated by Algorithm~\ref{algo:adap} in Example~\ref{subsec:lshapeex} for $\theta = 0.4$ and $k = 2$. Only triangles not colored in gray contribute to the computation of the error indicators~\eqref{eq:indicators}. The triangles of the patches $\widetilde{\omega}_\ell^{z,k}$ are depicted in {\color{blue} blue}, whereas the triangles in $\Omega$ are indicated in {\color{yellow} yellow} and the remaining triangles in $\Omegaext$ in {\color{gray} gray}. The inner boundary $\Gamma$ is shown in {\color{red} red} and the outer boundary of the union of the patches in {\color{green} green}.}
    \label{fig:lmeshesext}
\end{figure}

\begin{figure}[!ht]
 \resizebox{\textwidth}{!}{
   \subfloat{
    \stackunder[10pt]{\includegraphics{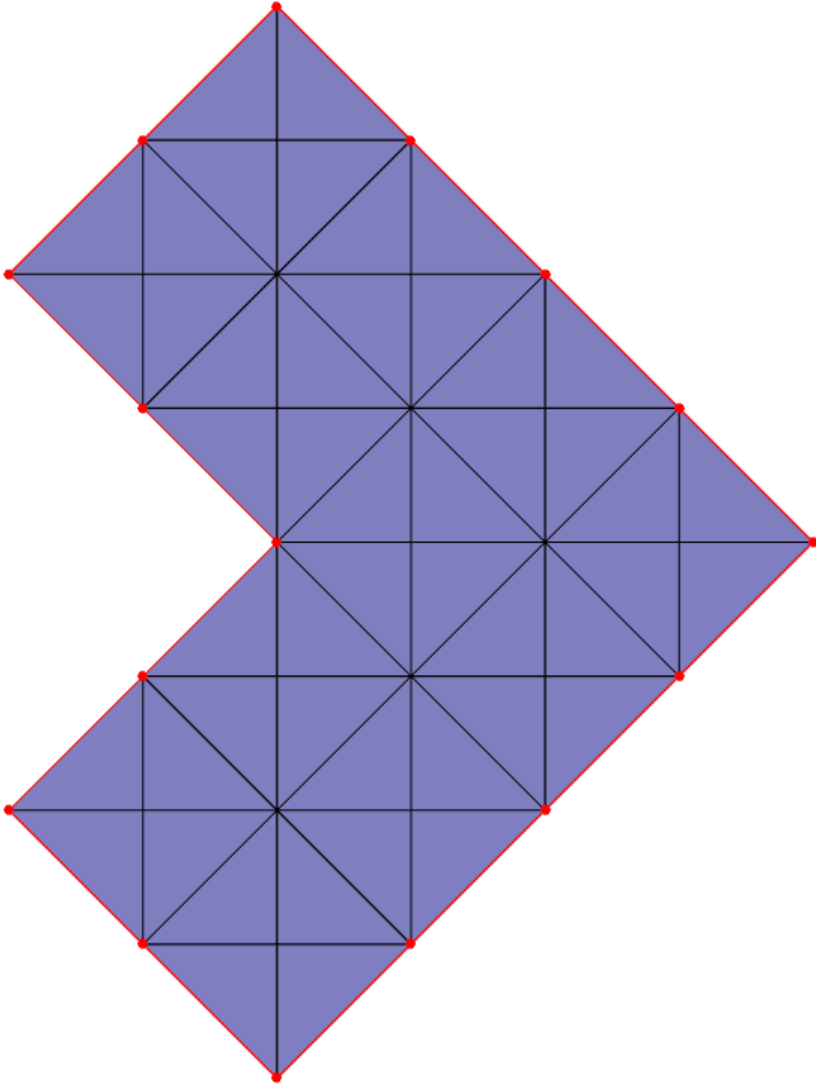}}{{\Huge$\# \TT_\ell = 48$, $\ell = 0$}}
   }
   \subfloat{
    \stackunder[10pt]{\includegraphics{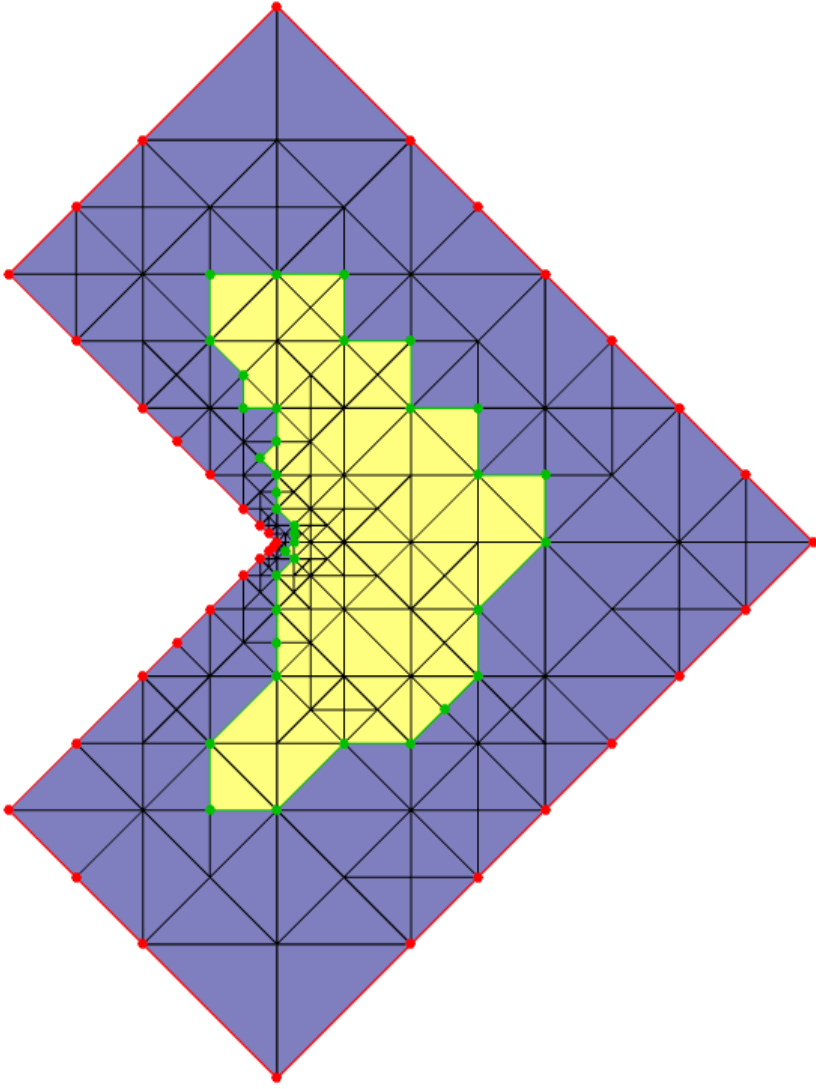}}{{\Huge $\# \TT_\ell = 295$, $\ell = 20$}}
   }
   
   \subfloat{
   \stackunder[10pt]{\includegraphics{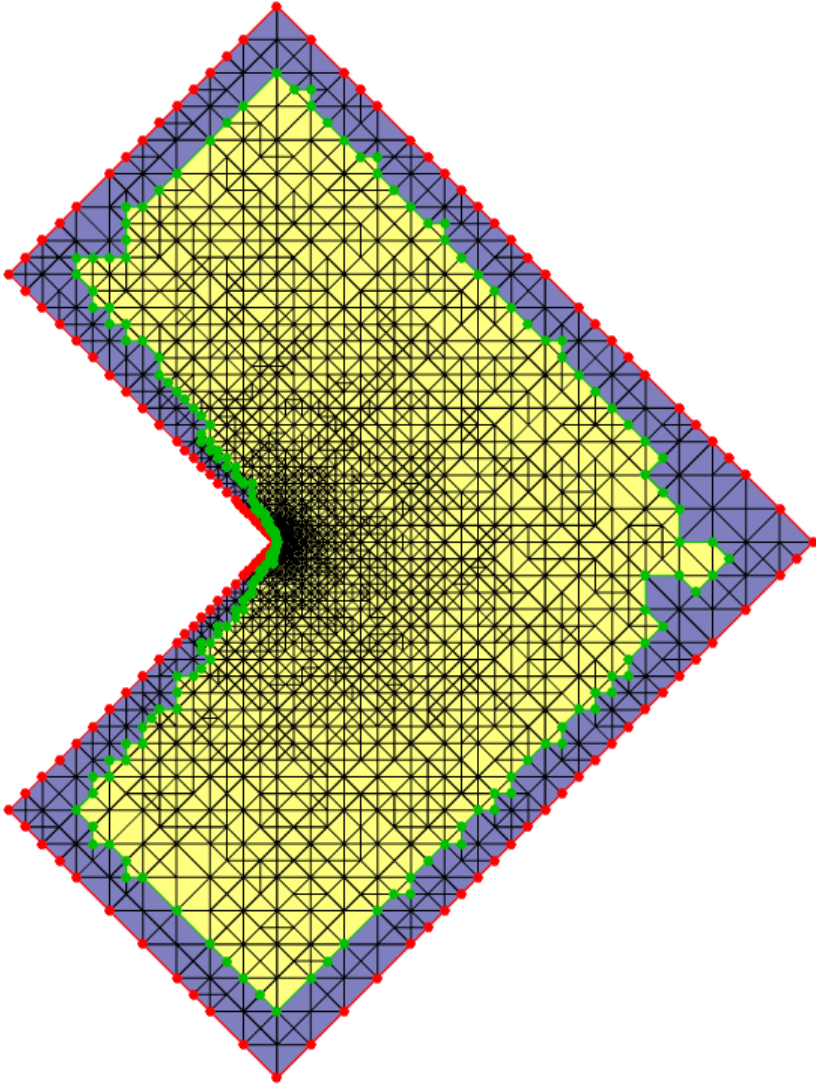}}{{\Huge $\# \TT_\ell = 4600$, $\ell = 20$}}
   }
  }
   \caption{Meshes generated by Algorithm~\ref{algo:adapint} in Example~\ref{subsec:lshapeex} for $\theta = 0.4$ and $k = 2$. The triangles of the patches $\omega_\ell^{z,k}$ are depicted in {\color{blue} blue}, while the remaining triangles are indicated in {\color{yellow} yellow}. The outer boundary $\Gamma$ is shown in {\color{red} red} and the inner boundary of the union of the patches in {\color{green} green}.}
    \label{fig:lmeshesint}
\end{figure}

The total upper bound $\eta_\ell$ from~\eqref{eq:fullestalt} with $k = 2$ and different marking parameters $0 < \theta \leq 1$ is shown in Figure~\ref{fig:lrates}~(left). We observe that Algorithm~\ref{algo:adap} exhibits optimal convergence rates for any marking parameter $\theta \in \set{0.2,0.4,0.6,0.8}$, whereas uniform refinement ($\theta = 1$) leads to suboptimal convergence. 
Figure~\ref{fig:lrates}~(right) depicts the different contributions $\eta_\ell^{\rm{int}}$, $\eta_\ell^{\rm{ext}}$, $\mathrm{osc}_\ell^{\rm{D}}$, and $\mathrm{osc}_\ell^{\rm{N}}$ to the total error estimator $\eta_\ell$ for $\theta = 0.4$.
\begin{figure}[!ht]
  \resizebox{\textwidth}{!}{
   \subfloat{
    \includegraphics{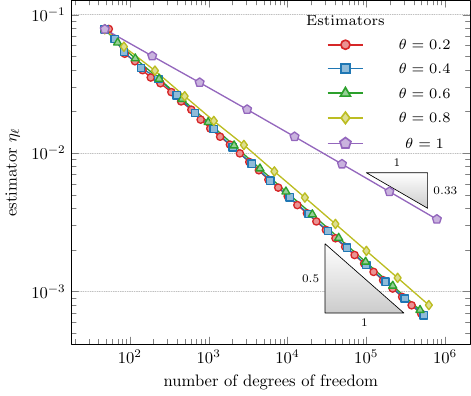}
   }
   \subfloat{
    \includegraphics{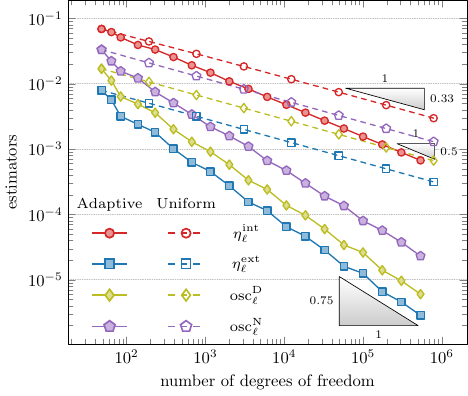}
   }
  }
   \caption{Convergence rates of the full error estimator $\eta_\ell$ from~\eqref{eq:fullestalt} in Algorithm~\ref{algo:adapint} for $k = 2$ in Example~\ref{subsec:lshapeex} for different marking parameters $0 < \theta \leq 1$ (left). Comparison of the different contributions $\eta_\ell^{\mathrm{int}}$, $\eta_\ell^{\mathrm{ext}}$, $\mathrm{osc}_\ell^{\mathrm{D}}$, and $\mathrm{osc}_\ell^{\mathrm{N}}$ of $\eta_\ell$ for $\theta = 0.4$ (right).}
   \label{fig:lrates}
 \end{figure}
Figure~\ref{fig:lintext} illustrates the comparison between $\eta_\ell^{\mathrm{ext}}$ and $\widetilde{\eta}_\ell^{\mathrm{ext}}$, i.e., Algorithm~\ref{algo:adapint} and Algorithm~\ref{algo:adap}. The observations are similar to those in Example~\ref{subsec:squareex}, i.e., both approaches lead to almost identical results, while the computation in the exterior domain is considerably more expensive. 
\begin{figure}[!ht]
  \resizebox{\textwidth}{!}{
    \includegraphics[scale = 0.1]{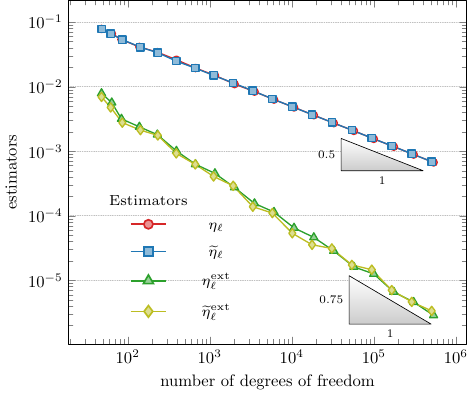}
  }
   \caption{Comparison between the estimators $\eta_\ell, \eta_\ell^{\mathrm{ext}}$ from~\eqref{eq:intindicators}--\eqref{eq:fullestalt} in Algorithm~\ref{algo:adapint} and $\widetilde{\eta}_\ell$, $\widetilde{\eta}_\ell^{\mathrm{ext}}$ from~\eqref{eq:indicators}--\eqref{eq:fullest} in Algorithm~\ref{algo:adap} for Example~\ref{subsec:lshapeex} with $k = 2$ and $\theta = 0.4$.}
   \label{fig:lintext}
 \end{figure}
The full error estimator $\eta_\ell$ and its contributions $\eta_\ell^{\mathrm{int}}$, $\eta_\ell^{\mathrm{ext}}$, $\mathrm{osc}_\ell^{\mathrm{D}}$, and $\mathrm{osc}_\ell^{\mathrm{N}}$ for adaptive ($\theta = 0.4$) refinement with respect to the Galerkin approximations of the Johnson--Nédélec coupling (left) and the Bielak--MacCamy coupling (right) are shown in Figure~\ref{fig:ljnbm}. Analogously to the symmetric Costabel--Han coupling, we observe that adaptive mesh refinement leads to optimal convergence also for these couplings.

\begin{figure}[!ht]
  \resizebox{\textwidth}{!}{
   \subfloat{
    \includegraphics{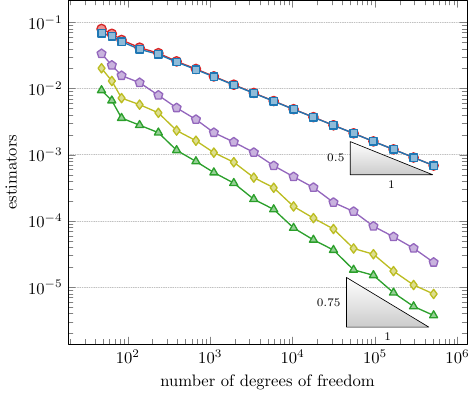}
   }
   \subfloat{
    \includegraphics{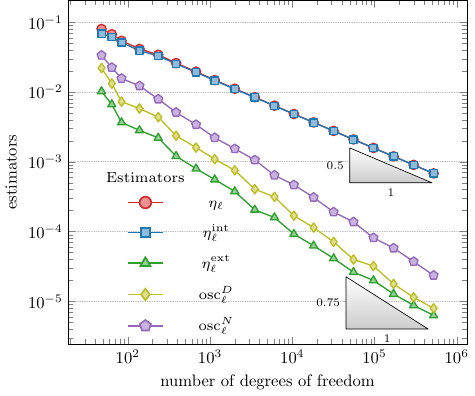}
   }
  }
   \caption{Comparison of the different contributions $\eta_\ell^{\mathrm{int}}$, $\eta_\ell^{\mathrm{ext}}$, $\mathrm{osc}_\ell^{\mathrm{D}}$, and $\mathrm{osc}_\ell^{\mathrm{N}}$ of $\eta_\ell$ in Algorithm~\ref{algo:adapint} for Example~\ref{subsec:lshapeex} with $\theta = 0.4$ for the Johnson--Nédélec coupling (left) and the Bielak--MacCamy coupling (right).}
   \label{fig:ljnbm}
 \end{figure}

\subsection{Example 2 (Z-shaped domain, nonlinear diffusion)} \label{subsec:zshapeex}

We consider the transmission problem~\eqref{eq:nltransmission} with nonlinear diffusion $\AAA (x,y) \coloneqq \mu(\abs{(x,y)}) (x,y)$, where $\mu(t) \coloneqq 2 + 1/(1+t)$. We note that the monotonicity constant of $\AAA$ is $\const{C}{mon} = 2$, while the Lipschitz constant is $\const{C}{Lip} = 4$. We prescribe the exact solution by 
\begin{equation*}
 \begin{split}
 u(x,y) 
 = r^{4/7} \sin(4\varphi/7)
 \quad \text{and} \quad
  u^{\mathrm{ext}}(x,y)
  = \frac{x + y + 1/4}{(x + 1/8)^2 + (y + 1/8)^2},
 \end{split}
\end{equation*} 
where $(r,\varphi)$ are the polar coordinates centered at the reentrant corner of the Z-shaped domain 
\begin{equation*}
 \Omega \coloneqq
 (-1/4,1/4)^2 \setminus \set{(x,y) \in \R^2 \given 0 \leq x \leq 1/4 \text{ and } 1/4 \leq y \leq x};
\end{equation*}
see Figure~\ref{fig:zmeshesext} and Figure~\ref{fig:zmeshesint}.
The solution $u$ exhibits a singularity at the reentrant corner $(0,0)$. Figure~\ref{fig:zmeshesext} and Figure~\ref{fig:zmeshesint} show the initial mesh and some adaptively refined meshes obtained by Algorithm~\ref{algo:adap} and Algorithm~\ref{algo:adapint}, respectively, with $k = 2$ and $\theta = 0.4$, which show that these are able to resolve the singularity adequately.

\begin{figure}[!ht]
 \resizebox{\textwidth}{!}{
   \subfloat{
    \stackunder{\stackunder[10pt]{\includegraphics{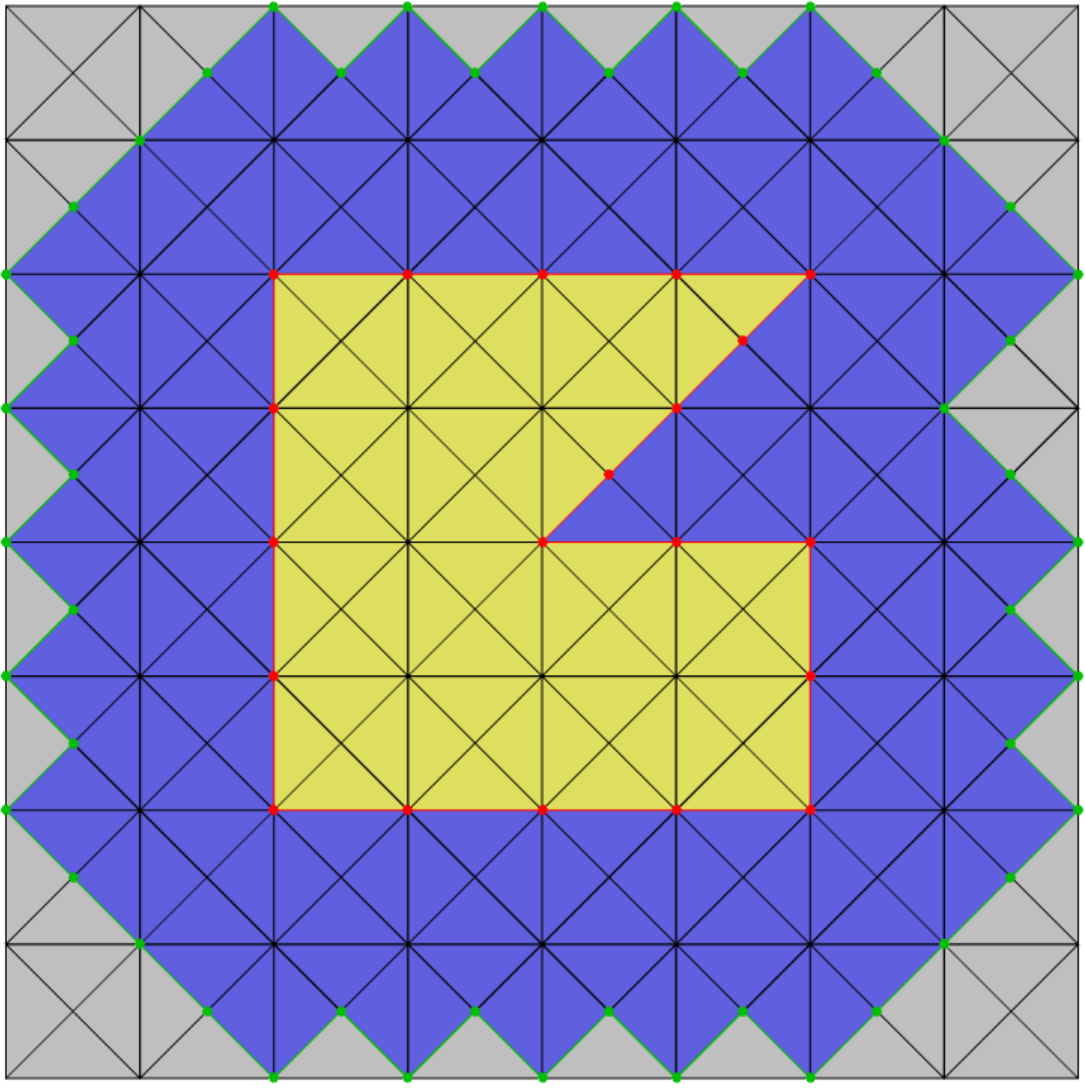}}{{\Huge$\# \TT_\ell = 56$, $\#\widehat{\TT_\ell} = 256$}}}{{\Huge $\ell=0$}}
   }
   \subfloat{
    \stackunder{\stackunder[10pt]{\includegraphics{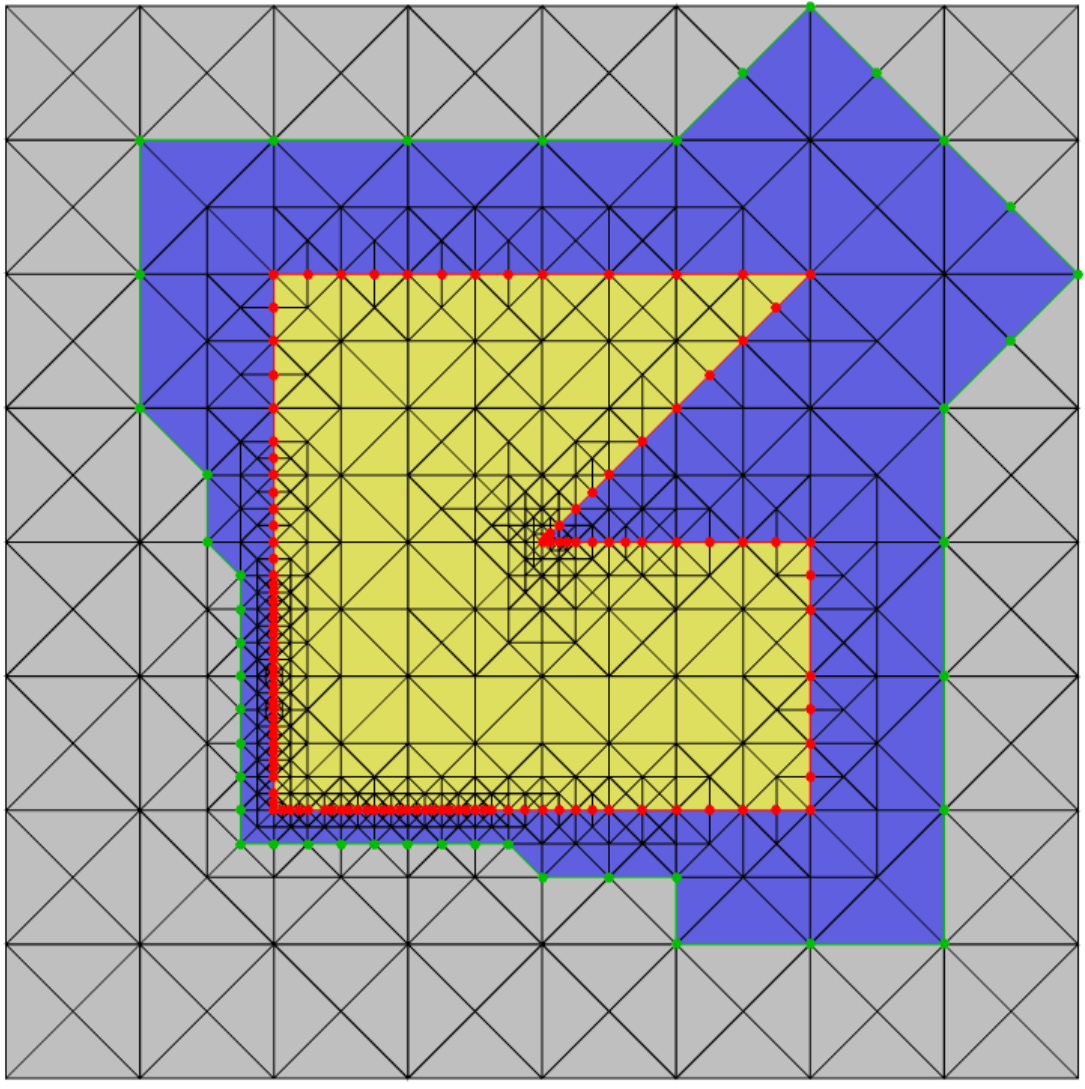}}{{\Huge $\# \TT_\ell = 541$, $\#\widehat{\TT_\ell} = 1132$}}}{{\Huge $\ell=20$}}
   }
   
   \subfloat{
    \stackunder{\stackunder[10pt]{\includegraphics{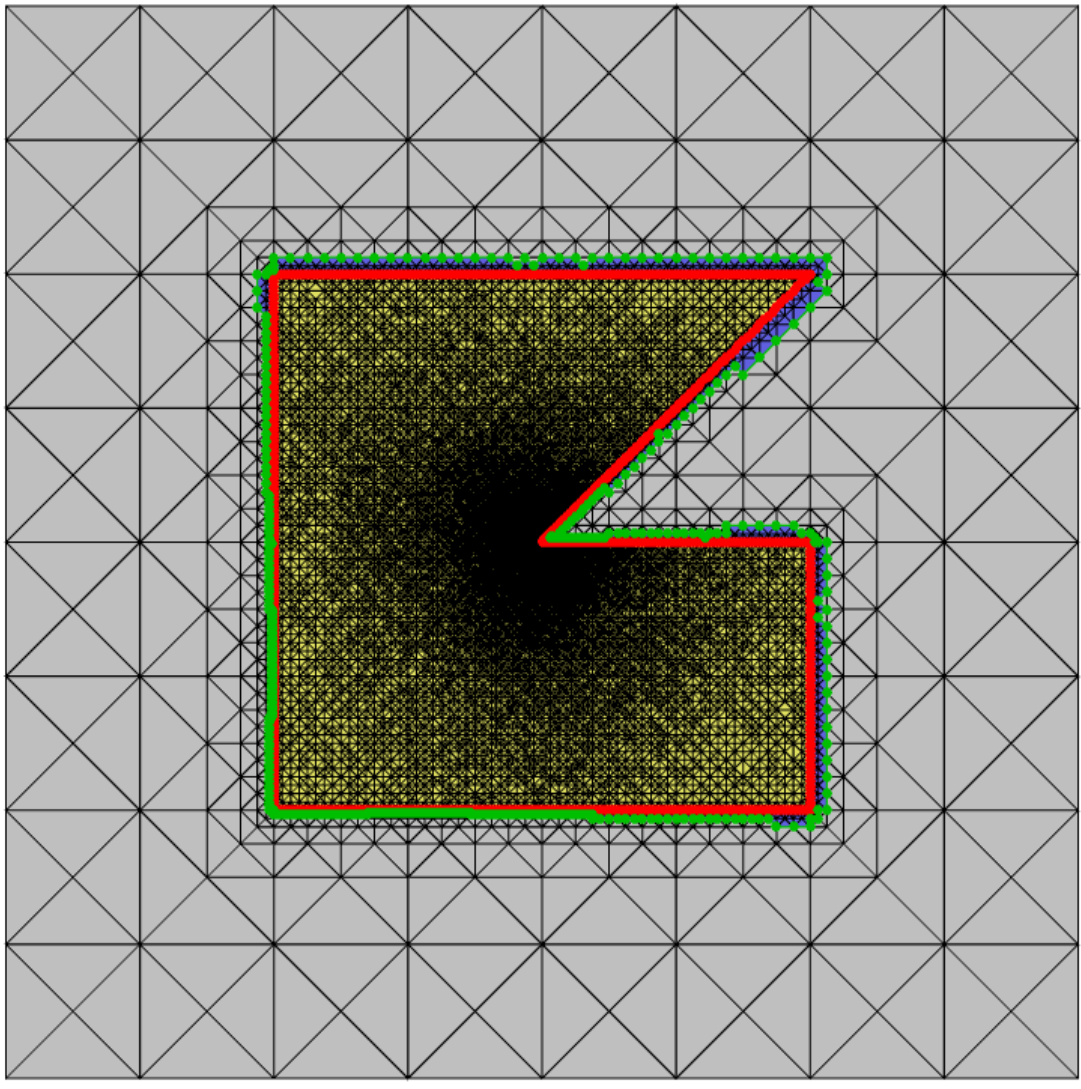}}{{\Huge $\# \TT_\ell = 37848$, $\#\widehat{\TT_\ell} = 43996$}}}{{\Huge $\ell=40$}}
   }
  }
   \caption{Meshes generated by Algorithm~\ref{algo:adap} in Example~\ref{subsec:zshapeex} for $\theta = 0.4$ and $k = 2$. Only triangles not colored in gray contribute to the computation of the error indicators~\eqref{eq:indicators}. The triangles of the patches $\widetilde{\omega}_\ell^{z,k}$ are depicted in {\color{blue} blue}, whereas the triangles in $\Omega$ are indicated in {\color{yellow} yellow} and the remaining triangles in $\Omegaext$ in {\color{gray} gray}. The inner boundary $\Gamma$ is shown in {\color{red} red} and the outer boundary of the union of the patches in {\color{green} green}.}
    \label{fig:zmeshesext}
\end{figure}

\begin{figure}[!ht]
 \resizebox{\textwidth}{!}{
   \subfloat{
    \stackunder[10pt]{\includegraphics{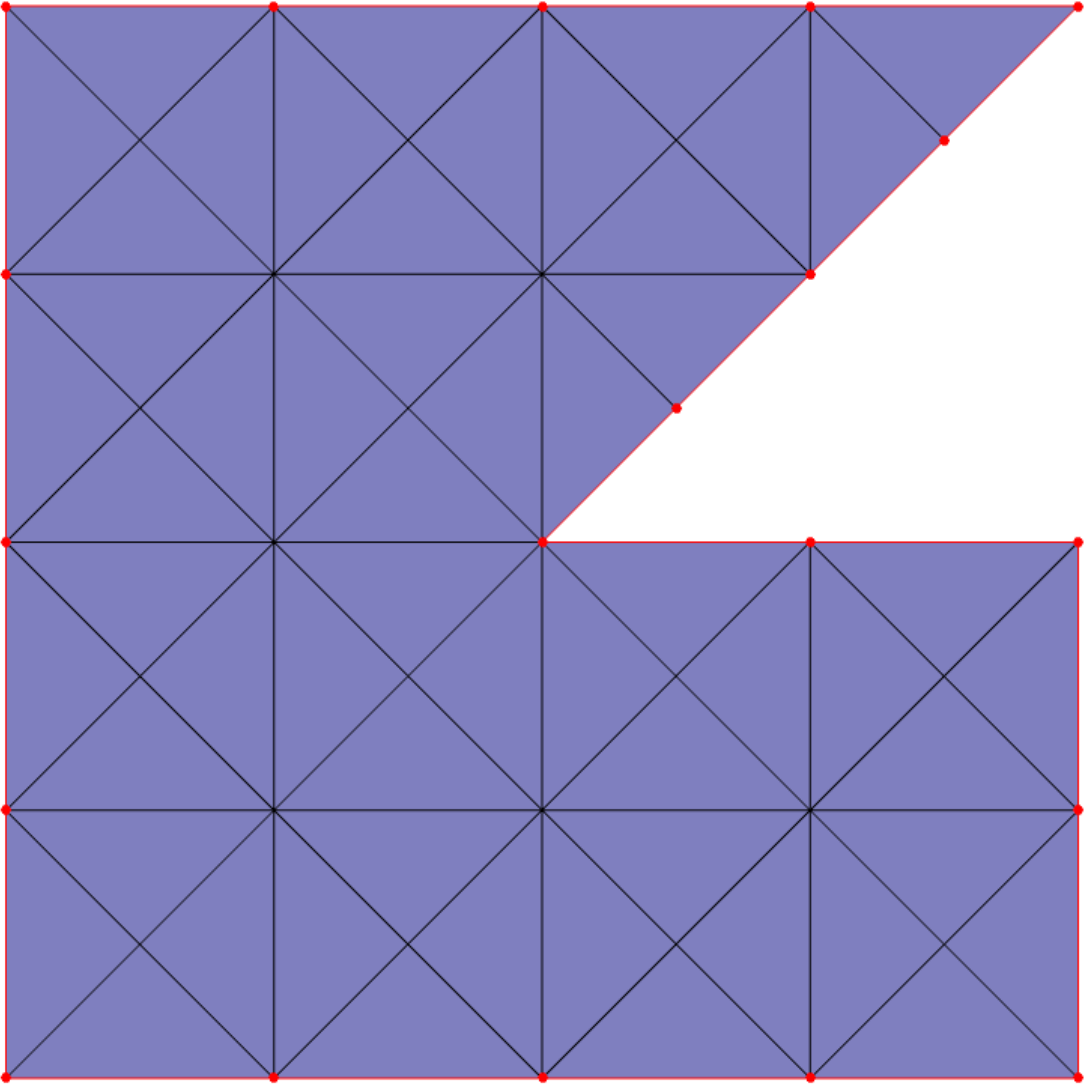}}{{\Huge$\# \TT_\ell = 56$, $\ell = 0$}}
   }
   \subfloat{
    \stackunder[10pt]{\includegraphics{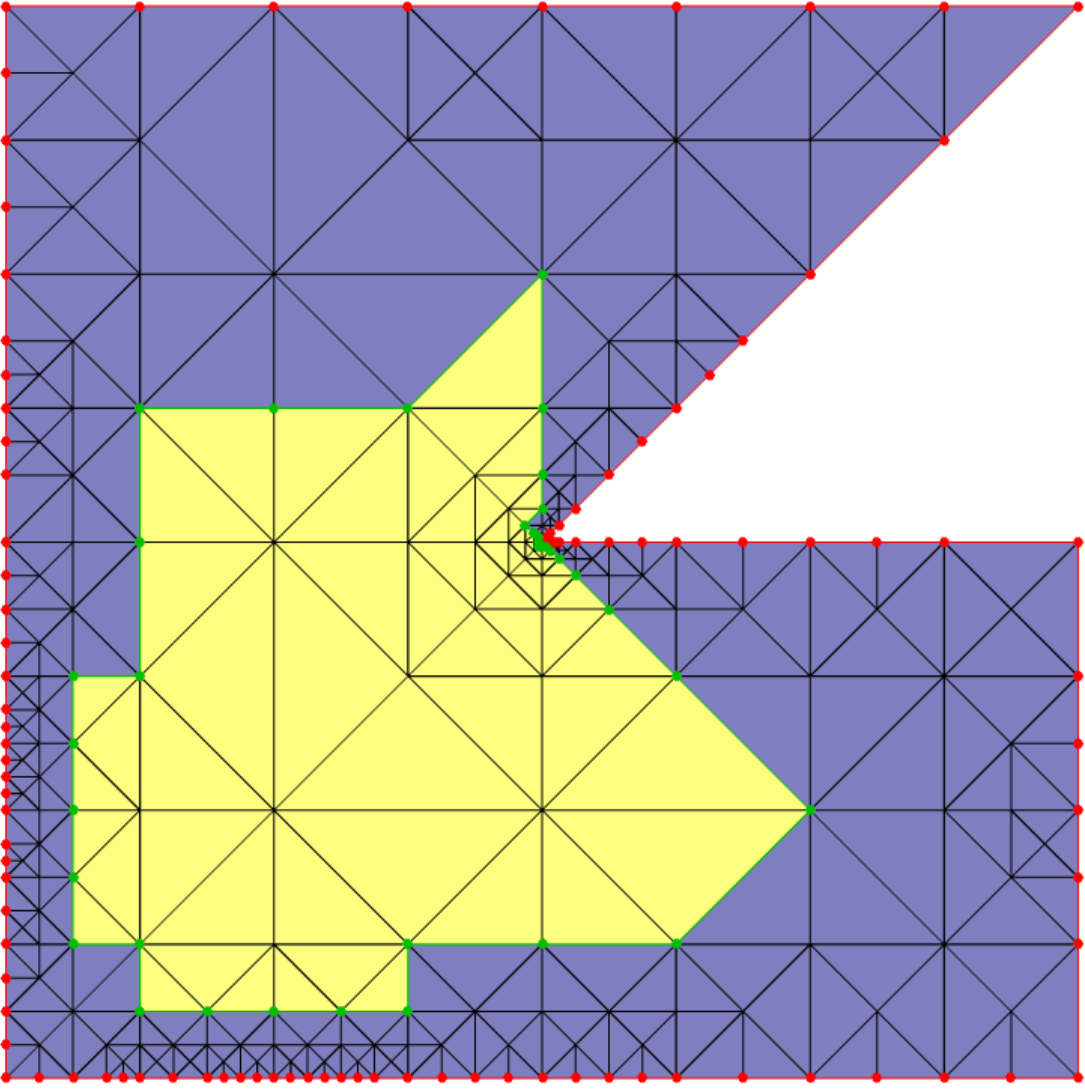}}{{\Huge $\# \TT_\ell = 609$, $\ell = 20$}}
   }
   
   \subfloat{
   \stackunder[10pt]{\includegraphics{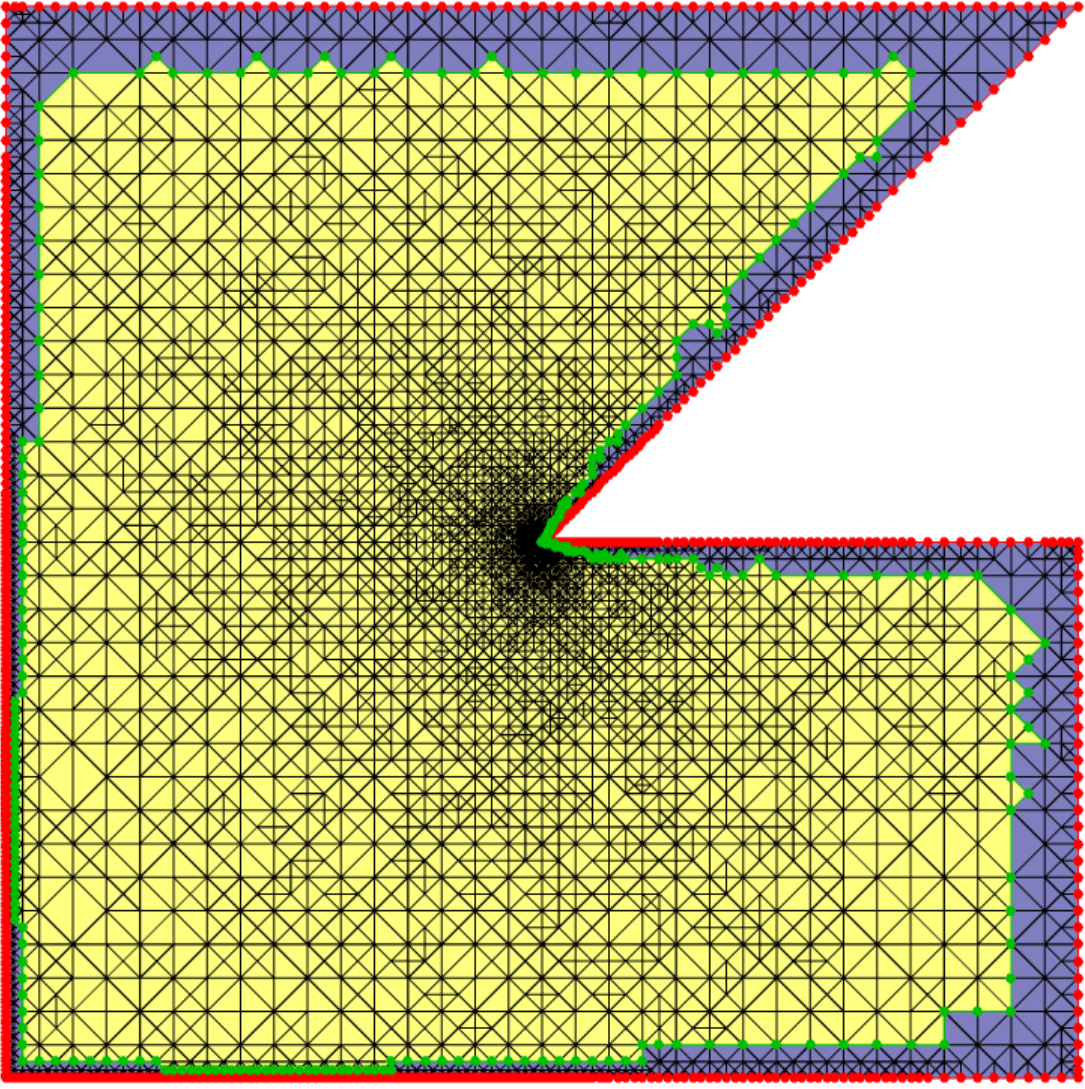}}{{\Huge $\# \TT_\ell = 52424$, $\ell = 40$}}
   }
  }
   \caption{Meshes generated by Algorithm~\ref{algo:adapint} in Example~\ref{subsec:zshapeex} for $\theta = 0.4$ and $k = 2$. The triangles of the patches $\omega_\ell^{z,k}$ are depicted in {\color{blue} blue}, while the remaining triangles are indicated in {\color{yellow} yellow}. The outer boundary $\Gamma$ is shown in {\color{red} red} and the inner boundary of the union of the patches in {\color{green} green}.}
    \label{fig:zmeshesint}
\end{figure}
The total upper bound $\eta_\ell$ from~\eqref{eq:fullestalt} generated by Algorithm~\ref{algo:adapint} with $k = 2$ and different marking parameters $0 < \theta \leq 1$ is shown in Figure~\ref{fig:zrates}~(left). We again observe that Algorithm~\ref{algo:adapint} exhibits optimal convergence rates for any marking parameter $\theta \in \set{0.2,0.4,0.6,0.8}$, whereas uniform refinement ($\theta = 1$) leads to suboptimal convergence.

\begin{figure}[!ht]
  \resizebox{\textwidth}{!}{
   \subfloat{
    \includegraphics{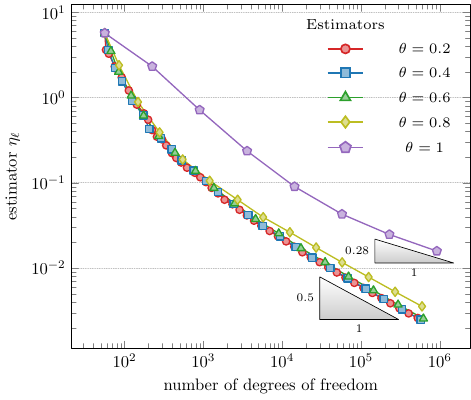}
   }
   \subfloat{
    \includegraphics{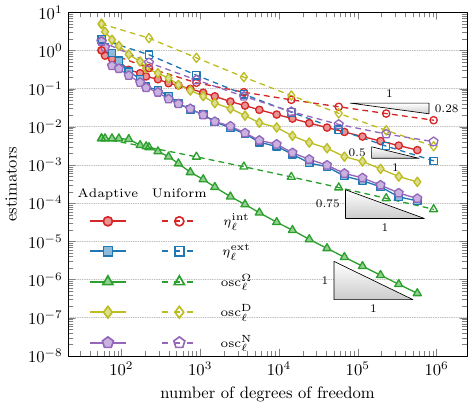}
   }
  }
   \caption{Convergence rates of the full error estimator $\eta_\ell$ from~\eqref{eq:fullestalt} in Algorithm~\ref{algo:adapint} for $k = 2$ in Example~\ref{subsec:zshapeex} for different marking parameters $0 < \theta \leq 1$ (left). Comparison of the different contributions $\eta_\ell^{\mathrm{int}}$, $\eta_\ell^{\mathrm{ext}}$, $\mathrm{osc}_\ell^\Omega$, $\mathrm{osc}_\ell^{\mathrm{D}}$, and $\mathrm{osc}_\ell^{\mathrm{N}}$ of $\eta_\ell$ for $\theta = 0.4$ (right).}
   \label{fig:zrates}
 \end{figure}
Figure~\ref{fig:zrates}~(right) depicts the different contributions $\eta_\ell^{\rm{int}}$, $\eta_\ell^{\rm{ext}}$, $\mathrm{osc}_\ell^{\Omega}$, $\mathrm{osc}_\ell^{\rm{D}}$, and $\mathrm{osc}_\ell^{\rm{N}}$ to the total error estimator $\eta_\ell$ for $\theta = 0.4$.

Figure~\ref{fig:zintext} illustrates the comparison between Algorithm~\ref{algo:adap} and Algorithm~\ref{algo:adapint}. The observations are similar to those in the previous examples, i.e., both approaches lead to almost identical results, while the computation in the exterior domain is more expensive. 

\begin{figure}[!ht]
  \resizebox{\textwidth}{!}{
    \includegraphics[scale = 0.1]{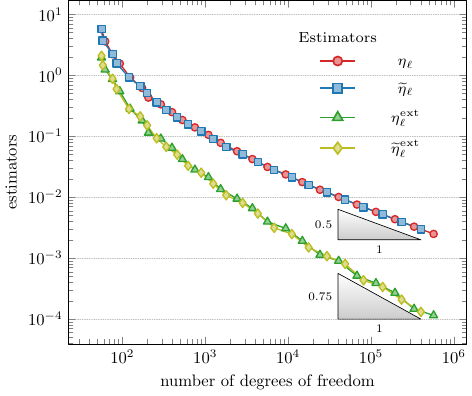}
  }
   \caption{Comparison between the estimators $\eta_\ell, \eta_\ell^{\mathrm{ext}}$ from~\eqref{eq:intindicators}--\eqref{eq:fullestalt} in Algorithm~\ref{algo:adapint} and $\widetilde{\eta}_\ell$, $\widetilde{\eta}_\ell^{\mathrm{ext}}$ from~\eqref{eq:indicators}--\eqref{eq:fullest} in Algorithm~\ref{algo:adap} for Example~\ref{subsec:zshapeex} with $k = 2$ and $\theta = 0.4$.}
   \label{fig:zintext}
 \end{figure}

\begin{figure}[!ht]
  \resizebox{\textwidth}{!}{
   \subfloat{
    \includegraphics{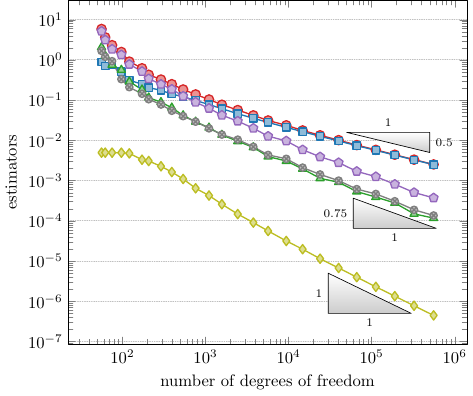}
   }
   \subfloat{
    \includegraphics{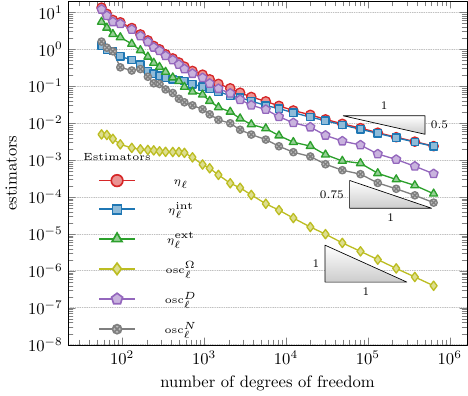}
   }
  }
\caption{Comparison of the different contributions $\eta_\ell^{\mathrm{int}}$, $\eta_\ell^{\mathrm{ext}}$, $\mathrm{osc}_\ell^{\mathrm{D}}$, and $\mathrm{osc}_\ell^{\mathrm{N}}$ of $\eta_\ell$ in Algorithm~\ref{algo:adapint} for Example~\ref{subsec:zshapeex} with $\theta = 0.4$ for the Johnson--Nédélec coupling (left) and the Bielak--MacCamy coupling (right).}
   \label{fig:zjnbm}
 \end{figure}

The total error estimator $\eta_\ell$ and its contributions $\eta_\ell^{\mathrm{int}}$, $\eta_\ell^{\mathrm{ext}}$, $\mathrm{osc}_\ell^\Omega$, $\mathrm{osc}_\ell^{\mathrm{D}}$, and $\mathrm{osc}_\ell^{\mathrm{N}}$ for adaptive ($\theta = 0.4$) and uniform ($\theta = 1$) refinement with respect to the Galerkin approximations of the Johnson--Nédélec coupling (left) and the Bielak--MacCamy coupling (right) are shown in Figure~\ref{fig:zjnbm}. We observe that for adaptive mesh refinement, Algorithm~\ref{algo:adapint} exhibits optimal convergence rates also for these couplings.

\printbibliography
\end{document}